\DeclareFontFamily{OT1}{pzc}{}
\DeclareFontShape{OT1}{pzc}{m}{it}{<-> s * [1.10] pzcmi7t}{}
\DeclareMathAlphabet{\mathpzc}{OT1}{pzc}{m}{it}
\newtheorem{dummy}{Dummy}
\numberwithin{dummy}{section}
\numberwithin{figure}{section}
\numberwithin{table}{section}
\theoremstyle{plain}
\newtheorem{theorem}[dummy]{Theorem}
\newtheorem{theo}[dummy]{Theorem}
\newtheorem{lemma}[dummy]{Lemma}
\newtheorem{proposition}[dummy]{Proposition}
\newtheorem{prop}[dummy]{Proposition}
\newtheorem{corollary}[dummy]{Corollary}
\theoremstyle{definition}
\newtheorem{definition}[dummy]{Definition}
\theoremstyle{remark}
\newtheorem{remark}[dummy]{Remark}
\def\imod#1{\allowbreak\mkern10mu({\operator@font mod}\,\,#1)}
\numberwithin{equation}{section}
\newcommand{\mmid}[0]{\parallel}
\newcommand{\ppar}[0]{\ \par}
\newcommand{\Z}{\mathbb{Z}}
\newcommand{\N}{\mathbb{N}}
\newcommand{\cC}{\mathcal{C}}
\newcommand{\s}{{e_s}}
\newcommand{\T}{{f_s}}
\renewcommand{\epsilon}{\varepsilon}
\renewcommand{\theta}[0]{\vartheta}
\newcommand{\Gal}{\mathop{\mathrm{Gal}}\nolimits}
\newcommand{\ord}{\mathop{\mathrm{ord}}\nolimits}
\newcommand{\Int}{{\rm Inn}}
\newcommand{\Size}[1]{\left\lvert #1 \right\rvert}
\newcommand{\Set}[1]{\left\{\, #1 \,\right\}}
\newcommand{\Span}[1]{\left\langle\, #1 \,\right\rangle}
\newcommand{\Perm}{S}
\DeclareMathOperator{\End}{End}
\DeclareMathOperator{\Aut}{Aut}
\DeclareMathOperator{\Inn}{Inn}
\DeclareMathOperator{\Hol}{Hol}
\DeclareMathOperator{\inv}{inv}
\newcommand{\norm}[0]{\trianglelefteq}
\newcounter{enumi_saved}
\def\imod#1{\allowbreak\mkern10mu({\operator@font mod}\,\,#1)}
\begin{document}

\date{21 aprile 2020 8:14 CEST --- Version 5.09%
}

\title[Hopf-Galois structures and skew braces]%
      {Hopf-Galois structures on \\ extensions of degree $p^{2} q$\\
        and skew braces of order $p^{2} q$:\\
        the cyclic Sylow $p$-subgroup case}
      
\author{E. Campedel}

\address[E.~Campedel]%
{Dipartimento di Matematica e Applicazioni\\
Edificio U5\\
Universit\`a degli Studi di Milano-Bicocca\\
via Roberto Cozzi, 55\\
20126 Milano}

\email{e.campedel1@campus.unimib.it}

\author{A. Caranti}

\address[A.~Caranti]%
 {Dipartimento di Matematica\\
  Universit\`a degli Studi di Trento\\
  via Sommarive 14\\
  I-38123 Trento\\
  Italy} 

\email{andrea.caranti@unitn.it} 

\urladdr{http://science.unitn.it/$\sim$caranti/}

\author{I. Del Corso}

\address[I.~Del Corso]%
        {Dipartimento di Matematica\\
          Universit\`a di Pisa\\
          Largo Bruno Pontecorvo, 5\\
          56127 Pisa\\
          Italy}
\email{ilaria.delcorso@unipi.it}

\urladdr{http://people.dm.unipi.it/delcorso/}

\subjclass[2010]{12F10 16W30 20B35 20D45}

\keywords{Hopf-Galois extensions, Hopf-Galois structures, holomorph,
  regular subgroups, braces, skew braces}

\begin{abstract}    
  Let $p, q$ be distinct primes, with $p > 2$.
  
  We  classify  the Hopf-Galois  structures  on  Galois extensions  of
  degree $p^{2}  q$, such that  the Sylow $p$-subgroups of  the Galois
  group are cyclic.
  
  This  we  do, according  to  Greither  and  Pareigis, and  Byott,  by
  classifying the  regular subgroups  of the  holomorphs of  the groups
  $(G,  \cdot)$  of  order  $p^{2}  q$, in  the  case  when  the  Sylow
  $p$-subgroups of  $G$ are cyclic.  This is equivalent  to classifying
  the skew braces $(G, \cdot, \circ)$.
  
  Furthermore, we  prove that if $G$  and $\Gamma$ are groups  of order
  $p^{2} q$ with non-isomorphic Sylow  $p$-subgroups, then there are no
  regular subgroups  of the  holomorph of $G$  which are  isomorphic to
  $\Gamma$. Equivalently, a Galois extension with Galois group $\Gamma$
  has no Hopf-Galois structures of type $G$.
  
  Our method relies  on the alternate brace operation  $\circ$ on $G$,
  which we use  mainly indirectly, that is, in terms  of the functions
  $\gamma : G  \to \Aut(G)$ defined by $g \mapsto  (x \mapsto (x \circ
  g) \cdot g^{-1})$.  These functions are in one-to-one correspondence
  with  the  regular  subgroups  of  the holomorph  of  $G$,  and  are
  characterised by the functional equation $\gamma(g^{\gamma(h)} \cdot
  h) = \gamma(g) \gamma(h)$, for $g,  h \in G$.  We develop methods to
  deal with these functions, with  the aim of making their enumeration
  easier,  and  more conceptual.
\end{abstract}

\thanks{The   first   and   the   second   author   are   members   of
  INdAM---GNSAGA. The authors gratefully  acknowledge support from the
  Departments of  Mathematics of  the Universities  of Milano-Bicocca,
  Pisa, and Trento.  {The third  author has performed this activity in
    the framework of  the PRIN 2017, title  ``Geometric, algebraic and
    analytic methods in arithmetic''.}}

\maketitle

\thispagestyle{empty}

\section{Introduction}
\subsection{The general problem, and the  classical approach}

Let   $L/K$   be   a   finite  Galois   field   extension,   and   let
$\Gamma=\Gal(L/K)$.  Then the group  algebra $K[\Gamma]$ is a $K$-Hopf
algebra, and its natural action on $L$ endows $L/K$ with a Hopf-Galois
structure. In  general this is  not the only Hopf-Galois  structure on
$L/K$, and  the study of  Galois module structures different  from the
classical one  is important, for  example in the context  of algebraic
number theory. In  fact, when $L/K$ is a wildly  ramified extension of
local  fields,  there  are  cases   in  which  the  ring  of  integers
$\mathcal{O}_{L}$ of  $L$ is free  as a module  over a Hopf  order in
some  $K$-Hopf  algebra  $H$,  but not  in  $K[\Gamma]$  (see  Child's
book~\cite{ChildsBook} for an overview and for the specific results).

Greither and Pareigis~\cite{GP} showed that the Hopf-Galois structures
on  $L/K$  correspond  to  the  regular subgroups  $G$  of  the  group
$\Perm(\Gamma)$  of  permutations  on  the  set  $\Gamma$,  which  are
normalised  by   the  image   $\rho(\Gamma)$  of  the   right  regular
representation $\rho$  of $\Gamma$ (in  the relevant literature  it is
common to use the left regular representation $\lambda$ instead of the
right  one $\rho$  we  are  employing here.   We  have translated  the
statements in the literature from left to right).

The groups  $G$ and $\Gamma$ have  the same cardinality but  they need
not be  isomorphic.  We will  say that  a Hopf-Galois structure  is of
\emph{type}  $G$  if  $G$  is  the  group  associated  to  it  in  the
Greither-Pareigis correspondence.

As usual we  denote by $e(\Gamma, G)$ the number  of regular subgroups
of $\Perm(\Gamma)$ normalised by  $\rho(\Gamma)$, which are isomorphic
to $G$.   Equivalently, $e(\Gamma,  G)$ is  the number  of Hopf-Galois
structures of type  $G$ on a Galois field extension  with Galois group
isomorphic to $\Gamma$.

The direct  determination of all regular  subgroups of $\Perm(\Gamma)$
normalised by $\rho(\Gamma)$ is in general a difficult task, since the
groups  $\Perm(\Gamma)$ is  large.   However, Childs~\cite{Chi89}  and
Byott~\cite{Byo96}  observed that  the  condition that  $\rho(\Gamma)$
normalises  $G$  can  be  reformulated  by  saying  that  $\Gamma$  is
contained in the holomorph $\Hol(G)$ of $G$, regarded as a subgroup of
$\Perm(G)$.   This translation  turns  out to  be  very useful,  since
$\Hol(G)$ is  usually much  smaller than $\Perm(\Gamma)$.  One obtains
the following result.
\begin{theorem}[{{\cite[Corollary p.~3320]{Byo96}}}]
  \label{th:byott96}
  Let  $L/K$ be  a finite  Galois  field extension  with Galois  group
  $\Gamma$.  For  any group $G$  with $\Size{G} =  \Size{\Gamma}$, let
  $e'(\Gamma,G)$  be  the number  of  regular  subgroups of  $\Hol(G)$
  isomorphic to $\Gamma$.

  Then the number $e(\Gamma, G)$ of Hopf-Galois structures on $L/K$ of
  type $G$ is given by
  \begin{equation}
    \label{eq:e_vs_e-prime}
    e(\Gamma,   G)  =   \frac{\Size{\Aut(\Gamma)}}{\Size{\Aut(G)}}  \,
    e'(\Gamma, G).
  \end{equation}

  Moreover $e(\Gamma)$, the total  number of Hopf-Galois structures on
  $L/K$, is given by $\sum_{G}e(\Gamma, G)$  where the sum is over all
  isomorphism types $G$ of groups of order $\Size{\Gamma}$.
\end{theorem}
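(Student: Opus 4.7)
The plan is to count pairs consisting of a regular subgroup of the appropriate permutation group together with an isomorphism from a fixed abstract group. Define
\[
E = \{\phi \colon G \hookrightarrow \Perm(\Gamma) : \phi(G) \text{ is regular and normalised by } \rho(\Gamma)\}
\]
and
\[
E' = \{\psi \colon \Gamma \hookrightarrow \Hol(G) : \psi(\Gamma) \text{ is regular as a subgroup of } \Perm(G)\}.
\]
The group $\Aut(G)$ acts freely on $E$ by pre-composition, and the orbits are parameterised exactly by the regular subgroups of $\Perm(\Gamma)$ of type $G$ normalised by $\rho(\Gamma)$; hence $|E| = e(\Gamma, G) \cdot |\Aut(G)|$, and likewise $|E'| = e'(\Gamma, G) \cdot |\Aut(\Gamma)|$. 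It therefore suffices to produce a bijection $E \leftrightarrow E'$, for then rearranging $|E| = |E'|$ yields \eqref{eq:e_vs_e-prime}.

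The bijection rests on a classical fact: if $N \leq \Perm(X)$ acts regularly on $X$, then any base point $x_0 \in X$ determines a bijection $\eta \colon N \to X$, $n \mapsto n(x_0)$, and conjugation by $\eta$ induces an isomorphism $\Perm(X) \xrightarrow{\sim} \Perm(N)$ which carries $N$ onto the left regular representation $\lambda(N)$ and, consequently, identifies $N_{\Perm(X)}(N)$ with $\Hol(N) = N_{\Perm(N)}(\lambda(N))$. Starting from $\phi \in E$, I take $X = \Gamma$, $N = \phi(G)$, fix a base point $\gamma_0 \in \Gamma$, and use $\phi$ to identify $N$ with $G$: this yields an identification $\Perm(\Gamma) \cong \Perm(G)$ under which $\phi(G)$ becomes $\lambda(G)$, and since $\rho(\Gamma)$ normalises $\phi(G)$, the composition $\Gamma \xrightarrow{\rho} \Perm(\Gamma) \xrightarrow{\sim} \Perm(G)$ lands inside $\Hol(G)$ with regular image, providing an element of $E'$. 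The inverse is constructed symmetrically: given $\psi \in E'$, the regularity of $\psi(\Gamma) \subseteq \Hol(G) \subseteq \Perm(G)$ and a base point in $G$ transport $\lambda(G) \subseteq \Hol(G)$ back to a regular subgroup of $\Perm(\Gamma)$, which by the same normaliser calculation is normalised by $\rho(\Gamma)$.

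The main subtlety, and the only real obstacle, is to verify that these two constructions are mutually inverse. This reduces to careful bookkeeping of the base-point-dependent identifications and of the conventions for left versus right regular representations, but no new idea is needed. Once this bookkeeping is settled, $|E| = |E'|$ gives \eqref{eq:e_vs_e-prime} at once. The final assertion $e(\Gamma) = \sum_G e(\Gamma, G)$ is then immediate, since every regular subgroup of $\Perm(\Gamma)$ normalised by $\rho(\Gamma)$ has a well-defined isomorphism type, so these types partition the set of all Hopf-Galois structures on $L/K$.
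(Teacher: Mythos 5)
Your argument is correct, but note that the paper does not prove this statement at all: it is quoted verbatim from Byott's 1996 paper (the citation \cite[Corollary p.~3320]{Byo96}), so there is no internal proof to compare against. What you have written is essentially Byott's original ``translation'' argument: count embeddings rather than subgroups, observe that $\Aut(G)$ (resp.\ $\Aut(\Gamma)$) acts freely by precomposition with orbits the regular subgroups in question, and then match the two embedding sets via the base-point bijection $g \mapsto \gamma_0^{\phi(g)}$ coming from regularity, using that conjugation by this bijection carries $\phi(G)$ onto a regular representation of $G$ and hence carries the normaliser of $\phi(G)$ onto $\Hol(G)$. The counting step and the final summation over isomorphism types are fine as stated. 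The one place where your deferred ``bookkeeping'' genuinely needs attention is the left/right issue you flag: with the paper's right-action conventions the forward construction identifies $\phi(G)$ with $\rho(G)$, not $\lambda(G)$, so in the reverse direction you must transport the \emph{same} regular representation back; if you transport $\lambda(G)$ instead, the two maps are not mutually inverse (they differ by the $\inv$ duality of Proposition~\ref{prop:gammatilde}), although even then each map is injective, which already forces $\Size{E}=\Size{E'}$ for finite sets. With that convention fixed, the verification is the routine computation $1_G^{\,b\,\rho(\gamma)\,b^{-1}} = \gamma_0^{\rho(\gamma)}$-type bookkeeping you describe, and the proof is complete.
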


There  is  a rich  literature  on  Hopf-Galois structures  on  various
classes of field extensions (for a  survey up to 2000, see the already
mentioned book by Childs~\cite{ChildsBook}).   For example, for an odd
prime $p$ it is known  that there are $p^{m-1}$ Hopf-Galois structures
on a  cyclic extension  of degree  $p^m$, and they  are all  of cyclic
type~\cite{kohl98}.  An elementary abelian  extension of degree $p^m$,
with $p>m$, has at least $p^{m(m-1)-1}(p-1)$ Hopf-Galois structures of
abelian type (see~\cite{Childs05}).  An abelian, non cyclic, extension
of degree $p^m$  admits also structures of non-abelian  type for $m\ge
3$~\cite{ByoChi}.

The exact number of Hopf-Galois  structures is known for some families
of Galois  extensions, such as those  of order the square  of a prime,
\cite{Byo96}, of  order the  product of two  primes~\cite{Byott04}, of
order the cube of a prime~\cite{zen2018, zen2019} and for the cyclic extensions
of squarefree order~\cite{BA17}.

An interesting issue is also  to determine which general properties of
either $\Gamma$  or $G$ force those  of the other. For  certain Galois
groups $\Gamma$ it is known that every Hopf-Galois structure must have
type $\Gamma$ (see~\cite{ByoChi}). For a Galois extension whose Galois
group $\Gamma$ is  abelian, the type $G$ of  any Hopf-Galois structure
must  be soluble~\cite[Theorem  2]{Byott15}, although  for a  soluble,
non-abelian Galois group $\Gamma$  there can be Hopf-Galois structures
whose   type   is   not   soluble~\cite[Corollary   3]{Byott15};   see
also~\cite{Tsang2019}.

The study of Hopf-Galois structures,  that is, of regular subgroups of
holomorphs, has also  a deep connection with the  theory of \emph{skew
  braces}. In  fact, if $G$ is  a group with respect  to the operation
``$\cdot$'',  classifying  the  regular   subgroups  of  $\Hol(G)$  is
equivalent to determining the operations  ``$\circ$'' on $G$ such that
$(G, \cdot, \circ)$ is a (right) skew brace~\cite{skew}, that is, $(G,
\circ)$ is also a  group, and the two group structures  on the set $G$
are related by
\begin{equation}
  \label{eq:brace}
  (g \cdot h) \circ k = (g \circ k) \cdot k^{-1} \cdot (h \circ k).
\end{equation}
This connection was first  observed by Bachiller in~\cite[\S 2]{Bac16}
and it is described in detail in the appendix to~\cite{SV2018}.

\subsection{The methods}

The main goal of this paper  is to classify the Hopf-Galois structures
on  a Galois  extension $L/K$  of order  $p^{2} q$,  where $p,  q$ are
distinct primes  and $p$  is odd, such  that $\Gamma=\Gal(L/K)$  has a
cyclic Sylow $p$-subgroup.  According to the discussion above (but see
Subsection~\ref{subsec:sylow}  for  a  technicality)  we  do  this  by
following  Byott's  approach,  that  is, by  determining  the  regular
subgroups  of $\Hol(G)$,  for each  group $G  = (G,  \cdot)$ of  order
$p^{2} q$, where $p, q$ are  distinct primes and $p$ is odd, such
  that  $G$  has  a  cyclic  Sylow $p$-subgroup.   This  is  in  turn
equivalent to  determining the right  skew braces $(G,  \cdot, \circ)$
such that $(G, \circ)\cong \Gamma$ (see~\cite[Problem 16]{problems}).

Our method relies on the use  of the alternate brace operation $\circ$
on $G$, mainly indirectly, that is through the use of the function
\begin{align*}
  \gamma :\ &G  \to \Aut(G)\\ &g \mapsto (x \mapsto  (x \circ g) \cdot
  g^{-1}),
\end{align*}
which is characterised by the functional equation
\begin{equation}
  \label{eq:GFE0}
  \gamma(g^{\gamma(h)} \cdot h) = \gamma(g) \gamma(h).
\end{equation}
(See  Theorem~\ref{thm:gamma-for-regular} and  the ensuing  discussion
for the  details.)  The functions  $\gamma$ satisfying~\eqref{eq:GFE0}
are  in  one-to-one  correspondence  with  the  regular  subgroups  of
$\Hol(G)$, and  occur naturally in  the theory of (skew)  braces: they
are called  $\mu$ in~\cite{Rump}, and  $\lambda$ in the  literature of
skew  braces~\cite{skew}.   They  have  been exploited,  albeit  in  a
somewhat different  context, in~\cite{affine, fgab, perfect,  p4}.  It
follows  that  to determine  the  number  $e'(\Gamma, G)$  defined  in
Theorem~\ref{th:byott96} we can count  the number of functions $\gamma
:\ G  \to \Aut(G)$  verifying \eqref{eq:GFE0} and  such that,  for the
operation $\circ$ defined on $G$ by
$$g\circ h=g^{\gamma(h)} h,$$ we have $(G,\circ)\cong\Gamma$.

In Section~\ref{sec:gamma} we develop methods to deal with these gamma
functions, that will  make enumerating them easier. As  a side effect,
our methods allow  us also to give alternative proofs  of some results
in the literature.  In Subsection~\ref{subs:nilpotent} we give a proof
of the  results of Byott~\cite[Theorem  1]{Byott13} about the  case of
finite   nilpotent   groups.    In  Subsection~\ref{subs:pq},   as   a
preliminary to our classification, we  give a compact treatment of the
case of groups of order $p q$, with $p, q$ distinct primes, dealt with
by   Byott   in~\cite{Byott04}  (see   also~\cite{AcriBonatto_pq}).    In
Subsection~\ref{subs:Kohl} we present a proof of a generalisation of a
result of Kohl~\cite{Ko13, Ko16}.

\subsection{Hopf-Galois structures of order $p^2q$}

Starting   with   Section~\ref{sec:the-groups},    we   restrict   our
consideration to the groups of order  $p^{2} q$, where $p$ and $q$ are
distinct primes; those  with cyclic Sylow $p$-subgroups  are listed in
Table~\ref{table:grp_aut}.

Applying  the results  of  Section~\ref{sec:gamma}, we  show that  for
$p>2$  a  Galois field  extension  $L/K$  with group  $\Gamma$  admits
Hopf-Galois structures  of type $G$ only  for those $G$ such  that $G$
and    $\Gamma$    have    isomorphic   Sylow    $p$-subgroups    (see
Theorem~\ref{teo:sylow} and Corollary~\ref{cor:t33}).

In Section~\ref{sec:proof} we  show that if $\Gamma$  has cyclic Sylow
$p$-subgroups and  $p > 2$,  each $G$ with cyclic  Sylow $p$-subgroups
defines  some  Hopf-Galois structure  on  $L/K$.   We then  explicitly
determine the number of structures for each type.

We accomplish this  by calculating, given a group $G  = (G, \cdot)$ of
order   $p^{2}  q$,   $p>2$,  the   following  equivalent   data  (see
Theorem~\ref{thm:gamma-for-regular}  and the  ensuing discussion)  for
each group $\Gamma$ of order $p^{2} q$ with cyclic Sylow $p$-subgroups
\begin{enumerate}
\item
  the total  number, and the  number and lengths of  conjugacy classes
  within $\Hol(G)$, of regular subgroups isomorphic to $\Gamma$;
\item
  the total number, and the  number of isomorphism classes, of (right)
  skew braces $(G, \cdot, \circ)$ such that $\Gamma \cong (G, \circ)$.
\end{enumerate}
\begin{remark}
  Here  and in  the following,  by  the \emph{(total)  number of  skew
    braces}  we  mean, given  a  group  $(G,  \cdot)$, the  number  of
  distinct operations  $\circ$ on  the set $G$  such that  $(G, \cdot,
  \circ)$ is a skew brace.
\end{remark}

The following theorems  summarise our results, where  the notation for
the groups is that given in Table~\ref{table:grp_aut}.
\begin{theo}
  \label{number}
  Let $L/K$ be a Galois field  extension of order $p^{2} q$, where $p$
  and $q$  are two  distinct primes with  $p > 2$,  and let  $\Gamma =
  \Gal(L/K)$.

  Let $G$ be a group of order $p^{2} q$.

  If the  Sylow $p$-subgroups  of $G$ and  $\Gamma$ are  not isomorphic,
  then there are no Hopf-Galois structures of type $G$ on $L/K$.

  If the  Sylow $p$-subgroups of $\Gamma$  and $G$ are cyclic,  then the
  numbers $e(\Gamma, G)$ of Hopf-Galois  structures of type $G$ on $L/K$
  are given in the following tables.
  \begin{enumerate}[(i)]
  \item For $q \nmid p-1$:
    \begin{center}
      \begin{tabular}{c | c c c}
        \diagbox[width=3.0em]{$\Gamma$}{$G$} & 1  & 2 & 3
        \\\hline
        1 & $p$ & $2 p (p-1)$ & $2 p (p-1)$
        \\
        2 & $p q$ & $2 p (p q - 2 q +
        1)$ & $2 p q (p - 1)$
        \\
        3 &  $p q$ & $2 p q (p - 1)$ & $2(p^{2}
        q - p q - q + 1)$ \\
      \end{tabular}
    \end{center}
    where the upper left sub-tables of sizes  $1\times 1$, $2\times 2$ and  $3\times 3$ give
    respectively the  cases $p \nmid q-1$,  $p \mmid q-1$ and  $p^2 \mid
    q-1$.
  \item For $q \mid p-1$:
    \begin{center}
      \begin{tabular}{c | c c}
        \diagbox[width=3.0em]{$\Gamma$}{$G$}
        & 1 & 4
        \\\hline
        1 & $p$ & $2p(q-1)$
        \\
        4 & $p^{2}$ & $2(p^{2}q-2p^2+1)$ \\
      \end{tabular}
    \end{center}
  \end{enumerate}

\end{theo}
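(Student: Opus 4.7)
\medskip

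\noindent\textbf{Proof plan.}

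The strategy is to apply the general correspondence recalled after Theorem~\ref{th:byott96}: for each group $G$ of order $p^2 q$ with cyclic Sylow $p$-subgroup we will enumerate the functions $\gamma : G \to \Aut(G)$ satisfying \eqref{eq:GFE0}, classify them according to the isomorphism type of the associated group $(G, \circ)$ with $g \circ h = g^{\gamma(h)} h$, and then convert regular-subgroup counts into Hopf-Galois counts via \eqref{eq:e_vs_e-prime}. The first assertion (no Hopf-Galois structures when the Sylow $p$-subgroups of $G$ and $\Gamma$ are non-isomorphic) is immediate from Theorem~\ref{teo:sylow} and Corollary~\ref{cor:t33}, so the work is concentrated in the two tables.

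The next step is to fix a group $G$ from Table~\ref{table:grp_aut}, and to exploit the arithmetic $\gcd(p^2, q) = 1$. Writing $P$ for the (cyclic) Sylow $p$-subgroup and $Q$ for a Sylow $q$-subgroup of $G$, one analyses a given $\gamma$ through the two restrictions $\gamma_{|P}$ and $\gamma_{|Q}$, together with the induced $\circ$-operation. In each case $\Aut(G)$, as read off Table~\ref{table:grp_aut}, has a transparent Sylow structure, which forces $\gamma(P)$ and $\gamma(Q)$ to sit in small and well understood subgroups of $\Aut(G)$. The gamma-function method of Section~\ref{sec:gamma} then allows one to reduce the functional equation \eqref{eq:GFE0} to a manageable system: the restriction of $\gamma$ to each Sylow subgroup is essentially governed by the $pq$-analysis of Subsection~\ref{subs:pq} and, where applicable, by the nilpotent case of Subsection~\ref{subs:nilpotent} or the generalisation of Kohl's theorem in Subsection~\ref{subs:Kohl}.

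Concretely, I would split the enumeration according to which of the four isomorphism types 1--4 the image $(G, \circ)$ belongs to. For each candidate $\Gamma$, the conditions are:
\begin{enumerate}[(i)]
\item the Sylow $p$-subgroup of $(G, \circ)$ has the prescribed isomorphism type (determined by $\gamma$ restricted to $P$ and by the action of $\gamma(Q)$ on $P$);
\item the Sylow $q$-subgroup of $(G, \circ)$ is normal or not, according to whether $\Gamma$ is of the required nilpotent/non-nilpotent type;
\item the action governing the semidirect structure of $(G, \circ)$ is consistent with $\Gamma$.
\end{enumerate}
In each line of each table one counts the $\gamma$ satisfying these constraints, obtaining $e'(\Gamma, G)$, and then multiplies by $|\Aut(\Gamma)| / |\Aut(G)|$ as in \eqref{eq:e_vs_e-prime}, reading the orders of the automorphism groups from Table~\ref{table:grp_aut}. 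The sub-table structure (the $1\times 1$, $2\times 2$, $3\times 3$ nesting in (i)) falls out automatically because groups of type 3 only exist when $p^2 \mid q-1$, and type 2 only when $p \mid q-1$; the two cases $q \mid p-1$ versus $q \nmid p-1$ separate because the existence of the non-nilpotent group of type 4 depends on $q \mid p-1$.

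The hardest part, and where the entries in the tables really require work, will be the non-nilpotent cases, i.e. the diagonal entries $(2,2)$, $(3,3)$ and $(4,4)$, and the mixed entries such as $(2,3)$, $(3,2)$, where both $G$ and $\Gamma$ are non-abelian, or where a nilpotent $G$ produces a non-nilpotent $\Gamma$ (or vice versa). In these cases one cannot quote Subsection~\ref{subs:nilpotent} directly: one has to track how an automorphism in $\gamma(Q)$ of $p$-power order interacts via \eqref{eq:GFE0} with an automorphism in $\gamma(P)$ of $q$-power order, and to verify that the resulting $\circ$-operation gives the semidirect product structure of the target $\Gamma$ rather than another one. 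I expect the numerology of the $(3,3)$ and $(4,4)$ entries, where the ``diagonal'' in $\Aut(G)$ contains many candidate $\gamma$-images and one must subtract off those giving the wrong isomorphism type, to be the principal computational obstacle; the remaining entries should follow from the same case analysis combined with the row/column sum consistency coming from the total count $\sum_G e(\Gamma,G) = e(\Gamma)$.
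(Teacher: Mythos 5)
Your overall framework is the same as the paper's: dispose of the non-isomorphic Sylow case via Theorem~\ref{teo:sylow} and Corollary~\ref{cor:t33}, then for each $G$ in Table~\ref{table:grp_aut} enumerate the gamma functions, sort them by the isomorphism type of $(G,\circ)$, and convert $e'(\Gamma,G)$ into $e(\Gamma,G)$ with formula~\eqref{eq:e_vs_e-prime}. But what you have written is a plan, not a proof: the content of the theorem is the specific values in the tables, and not a single entry is derived. You describe constraints (i)--(iii) that a $\gamma$ must satisfy, but you never carry out the count for any $G$, and you explicitly defer the ``numerology'' as an expected obstacle. As it stands the gap is the entire enumeration.

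Concretely, the devices that make that enumeration feasible, and which are absent from your proposal, are: (a) the duality of Subsection~\ref{ssec:duality}, packaged for these groups as Proposition~\ref{prop:duality_p2q}, which for non-abelian $G$ lets one assume the order-$r$ subgroup of the characteristic Sylow subgroup lies in $\ker(\gamma)$ and then simply double the count --- this is where every factor $2$ in your target tables comes from, and without it the GFs whose kernel misses that subgroup would have to be handled directly; (b) the lifting correspondence of Proposition~\ref{prop:1.4} (Remark~\ref{Argument}), which reduces GFs on $G$ with $B\le\ker(\gamma)$ to relative gamma functions on a Sylow subgroup $A$; (c) Proposition~\ref{prop:images_gamma}, which parametrises the RGFs on the cyclic Sylow $p$-subgroup by the admissible images $\gamma(a)=\eta$ with $A$ $\eta$-invariant and $\ord(\eta)\mid p^2$ --- this is what turns the functional equation~\eqref{eq:GFE0} into a finite, countable list of choices; and (d) the computation of $a^{\ominus 1}\circ b\circ a$ via Lemma~\ref{lemma:inverse_and_conjugacy}, which is how one decides which type $(G,\circ)$ has for each choice (for instance, for $G$ of type 2 every $\gamma$ with $\Size{\gamma(G)}=p^2$ forces type 3, and for $G$ of type 4 the case $\Size{\ker(\gamma)}=pq$ must be ruled out and the case $\Size{\ker(\gamma)}=p$ needs a genuinely delicate ad hoc argument). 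Two further misdirections: the subsections on groups of order $pq$, on nilpotent groups, and on Kohl's theorem are illustrations of the method, not ingredients of this proof, so quoting them will not supply the missing counts; and the proposed consistency check via $\sum_G e(\Gamma,G)=e(\Gamma)$ is circular, since the total number of structures is itself only obtained as a corollary of the theorem you are trying to prove.
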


\begin{theo}
  \label{number_sb}
  Let $G = (G, \cdot)$ be a group of order $p^{2} q$, where $p, q$ are
  distinct primes, with $p > 2$.

  If  $\Gamma$  is   a  group  of  order  $p^{2}  q$   and  the  Sylow
  $p$-subgroups  of  $G$ and  $\Gamma$  are  not isomorphic,  then  no
  regular subgroup of $\Hol(G)$ is isomorphic to $\Gamma$.
  
  If $G$ and  $\Gamma$ have both cyclic Sylow  $p$-subgroups, then the
  following tables give equivalently
  \begin{enumerate}
  \item
    the  number  $e'(\Gamma,  G)$  of  regular  subgroups  of  $\Hol(G)$
    isomorphic to $\Gamma$;
  \item
    the  number of  (right) skew  braces $(G,  \cdot, \circ)$  such that
    $\Gamma \cong (G, \circ)$.
  \end{enumerate}

  \begin{enumerate}[(i)]
  \item For $q \nmid p-1$:
    \begin{center}
      \begin{tabular}{c | c c c}
        \diagbox[width=3.0em]{$\Gamma$}{$G$}
        & 1  & 2 & 3
        \\\hline
        1 & $p$ & $2 p q$ & $2 q$
        \\
        2 & $p (p-1)$ & $2 p (p q - 2 q + 1)$ &
        $2 q (p - 1)$
        \\
        3 & $p^2  (p-1)$ & $2 p^2 q (p - 1)$ & $2(p^{2}
        q - p q - q + 1)$ \\
      \end{tabular}
    \end{center}
    where the upper left sub-tables of sizes  $1\times 1$, $2\times 2$ and  $3\times 3$ give
    respectively the  cases $p \nmid q-1$,  $p \mmid q-1$ and  $p^2 \mid
    q-1$.
  \item For $q \mid p-1$:
    \begin{center}
      \begin{tabular}{c | c c}
        \diagbox[width=3.0em]{$\Gamma$}{$G$} & 1 & 4 \\ \hline 1 & $p$ &
        $2 p^3$ \\ 4 & $q-1$ & $2 (p^{2}q-2p^2+1)$ \\
      \end{tabular}
    \end{center}
  \end{enumerate}
  
  For  each  group $\Gamma$  of  order  $p^{2}  q$ with  cyclic  Sylow
  $p$-subgroups, the following tables give equivalently
  \begin{enumerate}
  \item
    the  number  of  conjugacy  classes within  $\Hol(G)$  of  regular
    subgroups isomorphic to $\Gamma$;
  \item
    the  number of  isomorphism  classes of  skew  braces $(G,  \cdot,
    \circ)$ such that $\Gamma \cong (G, \circ)$.
  \end{enumerate} 
  \begin{enumerate}[(i)]
  \item For $q \nmid p-1$:
    \begin{center}
      \begin{tabular}{c | c c c}
        \diagbox[width=3.0em]{$\Gamma$}{$G$}
        & 1 & 2 & 3
        \\\hline
        1 & $2$ & $2p$ & $2$
        \\
        2 &  $p$ & $2p(p-1)$ & $2(p-1)$
        \\
        3 & $p$  & $2p(p-1)$ & $2p(p-1)$ \\
      \end{tabular}
    \end{center}
    where the upper left sub-tables of sizes $1\times 1$, $2\times 2$ and $3\times 3$ give
    respectively the cases $p \nmid q-1$,  $p \mmid q-1$ and $p^2 \mid
    q-1$.
  \item For $q \mid p-1$:
    \begin{center}
      \begin{tabular}{c|cc}
        \diagbox[width=3.0em]{$\Gamma$}{$G$}
        & 1 & 4
        \\\hline
        1 & $2$ & $4$
        \\
        4 & $1$ & $2(q-1)$ \\
      \end{tabular}
    \end{center}
  \end{enumerate}
\end{theo}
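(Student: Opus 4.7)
The first assertion, that no regular subgroup of $\Hol(G)$ is isomorphic to $\Gamma$ when the Sylow $p$-subgroups of $G$ and $\Gamma$ are non-isomorphic, is exactly the content of Theorem~\ref{teo:sylow} and Corollary~\ref{cor:t33}, so it requires no further argument.

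For the quantitative part, the plan is to work group-by-group through the four types of $G$ with cyclic Sylow $p$-subgroup listed in Table~\ref{table:grp_aut}, and for each such $G$ to enumerate the gamma functions $\gamma : G \to \Aut(G)$ satisfying~\eqref{eq:GFE0}, grouped by the isomorphism type of $(G, \circ)$, where $g \circ h = g^{\gamma(h)} \cdot h$. By Theorem~\ref{thm:gamma-for-regular} these counts give exactly $e'(\Gamma, G)$ and, equivalently, the number of skew brace operations $\circ$ on $(G, \cdot)$ with $(G, \circ) \cong \Gamma$.

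The parameterisation strategy is the one developed in Section~\ref{sec:gamma}. Because $G$ has a cyclic Sylow $p$-subgroup, Theorem~\ref{teo:sylow} constrains each image $\gamma(g)$ to lie in the subgroup of $\Aut(G)$ preserving the Sylow decomposition; combined with~\eqref{eq:GFE0}, this reduces $\gamma$ to its values on a generator $s$ of a Sylow $p$-subgroup of $G$ and a generator $t$ of a Sylow $q$-subgroup, subject to finitely many compatibility relations that come from the defining relations of $G$. The counting is then an explicit enumeration over the $p$- and $q$-parts of $\Aut(G)$, whose structure is described in Table~\ref{table:grp_aut}; which entries of the resulting count are nonzero depends on whether $p \nmid q-1$, $p \mmid q-1$, $p^{2} \mid q-1$, or $q \mid p-1$, which are precisely the conditions appearing in the statement. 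The isomorphism type of $(G, \circ)$ is then read off from invariants of $\circ$ computed from $\gamma$ via repeated application of $g \circ h = g^{\gamma(h)} h$: the $\circ$-orders of $s$ and $t$, the commutativity of $(G, \circ)$, and, in the non-abelian case, the way in which the Sylow $q$-part acts on the Sylow $p$-part under $\circ$.

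The second group of tables records the number of $\Aut(G)$-orbits on the admissible $\gamma$, equivalently the $\Hol(G)$-conjugacy classes of regular subgroups, equivalently the isomorphism classes of skew braces $(G, \cdot, \circ)$. The action used is the natural one induced by conjugation of $\Hol(G)$ by $\Aut(G) \le \Hol(G)$, and orbit-stabiliser, applied within each $\Gamma$-type, yields the counts. The main obstacle is the bookkeeping: up to nine pairs $(\Gamma, G)$ occur in part~(i) and four in part~(ii), each requiring a separate parameterisation, count, and orbit analysis. To keep the work manageable the preliminary results already developed are reused: Subsection~\ref{subs:nilpotent} disposes of the nilpotent case, Subsection~\ref{subs:pq} provides the $pq$-building blocks needed to handle the Sylow $q$-part, and Subsection~\ref{subs:Kohl} pins down the cyclic-to-cyclic entry. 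A final consistency check, which catches essentially any arithmetic error, is that the numbers must satisfy the relation $e(\Gamma,G) = (\Size{\Aut(\Gamma)}/\Size{\Aut(G)}) \cdot e'(\Gamma, G)$ of Theorem~\ref{th:byott96}, so that Theorems~\ref{number} and~\ref{number_sb} agree on the corresponding entries.
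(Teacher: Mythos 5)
Your disposal of the first assertion via Theorem~\ref{teo:sylow} and Corollary~\ref{cor:t33} is exactly the paper's argument, and your overall plan --- enumerate the gamma functions on each of the four types of $G$ in Table~\ref{table:grp_aut}, read off the isomorphism type of $(G,\circ)$ from computations such as $a^{\ominus 1}\circ b\circ a$, then count $\Aut(G)$-orbits to get conjugacy classes of regular subgroups, i.e.\ isomorphism classes of skew braces --- is the same strategy the paper follows in Section~\ref{sec:proof} (Propositions~\ref{prop:G1}--\ref{prop:G4}). However, what you have written is an outline, not a proof: the content of Theorem~\ref{number_sb} \emph{is} the specific table entries, and none of them is derived. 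Nothing in the proposal produces, for instance, the value $2p(pq-2q+1)$ for $(\Gamma,G)=(2,2)$, or $2p^{3}$ for $(1,4)$, or the class count $2(q-1)$ for $(4,4)$; the phrase ``explicit enumeration subject to finitely many compatibility relations'' is precisely the part of the proof that has to be carried out, case by case.

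There are also concrete gaps in the machinery you invoke. You never mention the duality of Subsection~\ref{ssec:duality} (Propositions~\ref{prop:gammatilde} and~\ref{prop:duality}, Corollary~\ref{cor:duality}, specialised in Proposition~\ref{prop:duality_p2q}), which is what allows one, for the non-abelian types $2$, $3$, $4$, to count only the GF's whose kernel contains the characteristic Sylow subgroup and then double; every factor $2$ in the tables comes from this step, and without it the enumeration is much harder. Likewise, the reduction you describe (``values of $\gamma$ on generators $s$ and $t$ with compatibility relations'') is not what Theorem~\ref{teo:sylow} gives: the actual mechanism is the lifting/restriction theory of Proposition~\ref{prop:1.4}, Remark~\ref{Argument} and Proposition~\ref{prop:images_gamma}, where a GF with $B\le\ker(\gamma)$ is determined by the single value $\gamma(a)$ on a generator of a $\gamma$-invariant Sylow subgroup, and when that Sylow subgroup is not characteristic each GF arises once for each invariant Sylow subgroup --- this is the source of the factors $q$ and $p^{2}$ in the counts, which your sketch cannot account for. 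The claimed reuse of earlier subsections is also inaccurate: Subsections~\ref{subs:nilpotent}, \ref{subs:pq} and~\ref{subs:Kohl} are illustrative applications of the method, not inputs to the proof; the nilpotent theorem covers only the single entry $(\Gamma,G)=(1,1)$, the order-$pq$ classification does not decompose $\Hol(G)$ for $\Size{G}=p^{2}q$, and Kohl's result does not ``pin down the cyclic-to-cyclic entry'' (its hypotheses concern a Sylow subgroup of prime order with $p\nmid\Size{\Aut(M)}$ and do not yield the value $p$). Finally, the proposed consistency check against Theorem~\ref{number} is circular, since in the paper Theorem~\ref{number} is deduced from Theorem~\ref{number_sb} via~\eqref{eq:e_vs_e-prime}; it cannot detect errors in the $e'$-values themselves.
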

The   lengths  of   the   conjugacy  classes   are   spelled  out   in
Propositions~\ref{prop:G1},    \ref{prop:G3},    \ref{prop:G2},    and
\ref{prop:G4}.

From Theorem~\ref{number}  we obtain  the total number  of Hopf-Galois
structures.
\begin{corollary}
  Let $L/K$ be  a Galois extension of order $p^2q$,  where $p$ and $q$
  are two distinct primes with $p>2$, and let $\Gamma=\Gal(L/K)$.
  
  Assume that $\Gamma$ has cyclic Sylow $p$-subgroups.
  
  The total number of the Hopf-Galois structures on $L/K$ is given by
  the following table.
  \begin{center}
    \begin{tabular}{c|c|c}
      Conditions  &  \hphantom{   }$\Gamma$\hphantom{  }&  Hopf-Galois
      structures
      \\\hline
      $q\nmid p-1$,  $p \nmid  q-1$ &  1 &  $p$
      \\
      $q\nmid p-1$, $p \mmid q-1$  & 1 & $p(2p-1)$ \\
      $q\nmid p-1$,
      $p^2 \mid  q-1$& 1 &  $p(4p-3)$ \\ $q\mid  p-1$ & 1  & $p(2q-1)$
      \\\hline
      $q\nmid  p-1$,  $p  \mmid q-1$  &  2 &  $p(2pq-3q+2)$
      \\
      $q\nmid p-1$,  $p^2\mid q-1$&  2 &  $p(4pq-5q+2)$
      \\\hline
      $q\nmid  p-1$, $p^2\mid  q-1$&  3 &  $4p^2q-3pq-2q+2$
      \\\hline
      $q\mid p-1$ & 4 & $2p^2q-3p^2+2$ \\
    \end{tabular}
  \end{center}
\end{corollary}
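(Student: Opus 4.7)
The plan is to derive the corollary as a direct consequence of Theorem~\ref{number} combined with the decomposition $e(\Gamma) = \sum_{G} e(\Gamma, G)$ supplied by Theorem~\ref{th:byott96}, where the sum ranges over all isomorphism types of groups $G$ of order $p^{2}q$.

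First I would invoke the vanishing clause of Theorem~\ref{number}: since the Sylow $p$-subgroup of $\Gamma$ is cyclic by hypothesis, every nonzero term in the sum forces $G$ to have a cyclic Sylow $p$-subgroup as well. Thus the sum collapses to the groups $G$ listed in Table~\ref{table:grp_aut}, that is, to the groups labelled $1, 2, 3, 4$, and only those of them actually existing under the divisibility conditions relating $p$ and $q$ in each case.

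Next I would split according to the two tables of Theorem~\ref{number}: case (i) covers $q \nmid p-1$, further divided into the three nested sub-tables corresponding to $p \nmid q-1$, $p \mmid q-1$ and $p^{2} \mid q-1$; case (ii) covers $q \mid p-1$. For each admissible $\Gamma$ the total $e(\Gamma)$ is obtained by summing the entries lying in the row indexed by $\Gamma$ across the columns corresponding to the groups $G$ existing in that subcase.

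The remaining work is routine arithmetic. As a sample check, in the row $\Gamma = 3$ under the hypotheses $q \nmid p-1$ and $p^{2} \mid q-1$ one obtains
\[
pq + 2pq(p-1) + 2(p^{2}q - pq - q + 1) = 4p^{2}q - 3pq - 2q + 2,
\]
in agreement with the corresponding entry of the corollary's table; the other rows are handled in the same fashion. The only bookkeeping point is to keep track of which of the groups $1, 2, 3$ actually appear as columns in each of the three nested sub-tables of case (i), so as to avoid including spurious terms. Beyond this, no additional structural input is required, so I do not expect any serious obstacle.
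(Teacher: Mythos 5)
Your proposal is correct and coincides with the paper's (largely implicit) argument: the corollary is obtained from Theorem~\ref{number} by using $e(\Gamma)=\sum_G e(\Gamma,G)$, noting that the vanishing statement kills all $G$ without cyclic Sylow $p$-subgroups, and summing each row of the relevant sub-table; your sample computation for $\Gamma=3$ checks out, as do the remaining rows.
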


\begin{corollary}
  \label{cor:ciclico}  A cyclic extension $L/K$ of degree $p^2q$, with
  $p>2$, admits exactly $p$ 
  cyclic Hopf-Galois structures.

  Moreover, the cyclic structures  are the only Hopf-Galois structures
  on $L/K$ if and only if there is only one isomorphism type of groups
  of order $p^2q$  with cyclic Sylow $p$-subgroup, namely  if and only
  if $p \nmid q-1$ and $q \nmid p-1$.
\end{corollary}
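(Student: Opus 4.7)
The plan is to read off both assertions directly from Theorem~\ref{number}, specialised to the case in which $\Gamma$ is cyclic of order $p^{2}q$, that is, $\Gamma = 1$ in the notation of Table~\ref{table:grp_aut}. No new computation is needed, only a careful inspection of the four sub-tables that appear in Theorem~\ref{number}.

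For the first assertion, I would look at the $(1,1)$-entry, corresponding to $\Gamma$ and $G$ both cyclic, in every sub-table. In all four relevant cases, namely $p \nmid q-1$, $p \mmid q-1$, $p^{2} \mid q-1$ (the three sub-cases with $q \nmid p-1$), and $q \mid p-1$, this entry is equal to $p$. Therefore, regardless of the divisibility relations among $p$, $q$, $p-1$ and $q-1$, a cyclic Galois extension $L/K$ of degree $p^{2}q$ admits exactly $p$ cyclic Hopf-Galois structures.

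For the second assertion, I need to characterise when $e(\Gamma, G) = 0$ for every non-cyclic group $G$ of order $p^{2}q$. The first part of Theorem~\ref{number} rules out a priori every $G$ whose Sylow $p$-subgroup is non-cyclic, so only the groups $G \neq 1$ appearing in the row $\Gamma = 1$ of the appropriate sub-table can contribute further structures. Reading these rows off: if $p \mmid q-1$ then $e(1, 2) = 2p(p-1) > 0$; if $p^{2} \mid q-1$ then additionally $e(1, 3) = 2p(p-1) > 0$; and if $q \mid p-1$ then $e(1, 4) = 2p(q-1) > 0$, which is non-zero since $q \geq 2$. Hence the cyclic structures are the \emph{only} Hopf-Galois structures on $L/K$ if and only if none of the three conditions $p \mmid q-1$, $p^{2} \mid q-1$, $q \mid p-1$ is satisfied, that is, exactly when $p \nmid q-1$ and $q \nmid p-1$.

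To close the argument I would invoke the classification summarised in Table~\ref{table:grp_aut}: the three non-cyclic isomorphism types of groups of order $p^{2}q$ with cyclic Sylow $p$-subgroup, namely types $2$, $3$ and $4$, exist precisely under the three conditions $p \mmid q-1$, $p^{2} \mid q-1$ and $q \mid p-1$ respectively. Consequently, the absence of all three conditions is exactly the statement that the cyclic group is the unique isomorphism type of groups of order $p^{2}q$ with cyclic Sylow $p$-subgroup, which yields the claimed equivalence. There is no real obstacle in this proof --- it is a matter of bookkeeping --- the only point requiring attention is the correct matching of the case distinctions in Theorem~\ref{number} with the existence conditions for the four group types in Table~\ref{table:grp_aut}.
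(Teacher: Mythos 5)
Your proposal is correct and matches the paper's intent: the corollary is stated as an immediate consequence of Theorem~\ref{number}, read off from the row $\Gamma = 1$ of its tables together with the first part of that theorem (ruling out types with non-cyclic Sylow $p$-subgroups) and the existence conditions for types $2$, $3$, $4$ in Table~\ref{table:grp_aut}, exactly as you do. The only point worth stating explicitly, which you handle correctly, is that the conditions $p \mid q-1$, $p^{2} \mid q-1$, $q \mid p-1$ are precisely the existence conditions of the non-cyclic types with cyclic Sylow $p$-subgroup, so their simultaneous failure is equivalent to uniqueness of the isomorphism type.
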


Note that the first statement of the above corollary follows also from
\cite{Byott13}.

Finally,  in  Section~\ref{sec:split}  we apply  the  method  proposed
in~\cite{CRV16} to count the cyclic Hopf-Galois structures on a Galois
extension of order $p^{2} q$. In fact,  in this case one can show that
all cyclic structures are induced, and the induced structures are easy
to compute, so that we recover our results in Theorem~\ref{number} for
the group  $G$ of type  1; note that  the only induced  structures for
extensions of order $p^{2} q$ are the abelian ones.

The case of groups of order $p^{2} q$  with $p < q - 1$ has been dealt
with in~\cite{Dietzel}  for braces. Braces have been introduced by
Rump in~\cite{braces}, and  correspond to
the case when the group $(G, \cdot)$ is abelian.

In a separate  paper, we intend to deal with  the regular subgroups of
holomorphs of groups of order  $p^{2} q$ with elementary abelian Sylow
$p$-subgroups, where $p,  q$ are distinct primes, and of  order $4 q$,
where $q$ is an odd prime.
\begin{remark}
  \label{rem:two-wont-do}
  The reason why we are not treating here  the case $p = 2$ is that in
  the  holomorph of  a group  $G$  of order  $4 q$  with cyclic  Sylow
  $2$-subgroups there  are regular  subgroups with  elementary abelian
  Sylow  $2$-subgroups.  This  boils  down  to the  fact  that in  the
  normaliser of  the cyclic subgroup $\Span{(1234)}$  in the symmetric
  group on four  letters there is a  regular subgroup $\Span{(13)(24),
    (12)(34)}$,     which      is     elementary      abelian.      In
  Subsection~\ref{subsec:sylow} we show that this does not happen when
  $p$ is odd.
\end{remark}

\subsection{Another classification}

After this manuscript was  submitted,  Acri and Bonatto posted on
arXiv a manuscript~\cite{AcriBonatto_p2q}, in which they employ the methods
of~\cite{skew}  to count the isomorphism  classes of
skew braces of size $p^{2} q$ for odd primes $p$ and $q$. In
Subsection~\ref{subs:AB} below, we compare their results and methods,
and more generally the methods employed in the theory of skew braces,
with ours.

\section*{Acknowledgements}
The  authors are  most grateful  to  Nigel Byott  for suggesting  this
problem.

The authors are grateful to the referee for the useful comments.

The system for computational discrete algebra \textsl{GAP}~\cite{GAP4}
has been invaluable in gaining insight into this subject.

\section{The gamma function}
\label{sec:gamma}

\subsection{Describing the regular subgroups of the holomorph}

There are several equivalent ways to describe the regular subgroups of
the  holomorph  of  a  group   $(G,  \cdot)$.   As  explained  in  the
Introduction, we will appeal to  the alternate group operation $\circ$
which occurs in  the definition of a skew  brace~\cite{skew}, but then
mainly  use  the  equivalent   method  of  gamma  functions,  employed
in~\cite{affine, fgab, perfect, p4}.

The \emph{abstract holomorph} of a group $G$ is the natural semidirect
product $\Aut(G) G$. In this paper we will use a certain concrete realisation
of it, namely the \emph{permutational holomorph}, as defined in the
following.

Given a group $G$, denote by $S(G)$ the group of all permutations on
the underlying set $G$. The \emph{right regular representation} of $G$ is the
homomorphism
\begin{align*}
  \rho :\ &G \to S(G)\\
  &g \mapsto (x \mapsto x g).
\end{align*}
Similarly, the \emph{left regular representation} of $G$ is the
\emph{anti}homomorphism
\begin{align*}
  \lambda :\ &G \to S(G)\\
  &g \mapsto (x \mapsto g x).
\end{align*}
Write $\inv : g \mapsto g^{-1}$ for the inversion map on $G$. Clearly
$\inv \in S(G)$.

The following facts are  well known, see for instance~\cite[Lemma
  3.8]{PrSch}.
\begin{proposition}\ppar
  \label{prop:right-and-left}
  \begin{enumerate}
  \item\label{lemma:AutG-is-stabilizer} The stabiliser of $1$ in  
    the normaliser $N_{S(G)}(\rho(G))$ of the image $\rho(G)$ of the
    right regular representation is $\Aut(G)$. 
  \item\label{item:normaliser}  We have 
    \begin{equation*}
      N_{S(G)}(\rho(G))  =  \Aut(G)   \rho(G)  =  \Aut(G)  \lambda(G)  =
      N_{S(G)}(\lambda(G)),
    \end{equation*}
    and this group is  isomorphic to the abstract holomorph  $\Aut(G) G$
    of $G$.
  \item\label{item:inversion}  $\inv$
    centralises $\Aut(G)$,  and
    conjugates $\rho(G)$  to $\lambda(G) \le \Hol(G)$,
    that is
    \begin{equation*}
      \rho(G)^{\inv} = \lambda(G). 
    \end{equation*}
    Thus $\inv$ normalises $N_{S(G)}(\rho(G))$. 
  \end{enumerate}
\end{proposition}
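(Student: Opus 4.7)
The plan is to prove the three parts in turn, exploiting throughout that $\rho(G)$ acts regularly on $G$, so any element of $N := N_{S(G)}(\rho(G))$ is determined by its value at $1$ together with the automorphism it induces on $\rho(G)$ by conjugation. For (1), I would take $\sigma \in N$ with $\sigma(1) = 1$; conjugation by $\sigma$ is an automorphism of $\rho(G) \cong G$, so there is some $\alpha \in \Aut(G)$ with $\sigma \rho(g) \sigma^{-1} = \rho(\alpha(g))$ for all $g \in G$. Applying both sides to $1$ yields $\sigma(x) = \sigma \rho(x)(1) = \rho(\alpha(x))(1) = \alpha(x)$, so $\sigma = \alpha \in \Aut(G)$. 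The converse is the direct check that every $\alpha \in \Aut(G)$ fixes $1$ and that $\alpha \rho(g) \alpha^{-1}(x) = x\alpha(g) = \rho(\alpha(g))(x)$, so $\alpha$ normalises $\rho(G)$.

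For (2), I would combine (1) with a Frattini-style counting argument. Since $\rho(G)$ is regular and normal in $N$, the orbit-stabiliser theorem applied at $1$, together with (1), gives $N = \Aut(G) \cdot \rho(G)$. Associativity yields $\rho(g)\lambda(h)(x) = hxg = \lambda(h)\rho(g)(x)$, so $\lambda(G)$ centralises, hence normalises, $\rho(G)$; in particular $\lambda(G) \subseteq N$. Since $\lambda(G) \cap \Aut(G) = \{1\}$ (only $\lambda(1)$ fixes $1$) and $\Aut(G)$ normalises $\lambda(G)$ via $\alpha\lambda(g)\alpha^{-1} = \lambda(\alpha(g))$, the product $\Aut(G)\lambda(G)$ is a subgroup of $N$ of order $\Size{\Aut(G)} \cdot \Size{G} = \Size{N}$, which forces equality. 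Exchanging the roles of $\rho$ and $\lambda$ in the same argument yields $N_{S(G)}(\lambda(G)) = \Aut(G)\lambda(G) = N$. Finally, the map $(\alpha, g) \mapsto \alpha\rho(g)$ is a bijection $\Aut(G) \ltimes G \to N$ whose multiplication rule, read off from $\alpha\rho(g)\alpha^{-1} = \rho(\alpha(g))$, identifies $N$ with the abstract holomorph.

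For (3), a direct calculation with the involution $\inv$ suffices. For $\alpha \in \Aut(G)$, $(\inv\, \alpha\, \inv)(x) = (\alpha(x^{-1}))^{-1} = \alpha(x)$, so $\inv$ commutes with $\alpha$. For $\rho(g)$, $(\inv\, \rho(g)\, \inv)(x) = (x^{-1}g)^{-1} = g^{-1}x = \lambda(g^{-1})(x)$, giving $\rho(G)^{\inv} = \lambda(G)$. Combined with (2), this shows that $\inv$ interchanges the two distinguished regular subgroups $\rho(G)$ and $\lambda(G)$ of $N$ while centralising $\Aut(G)$, hence normalises $N = \Aut(G)\rho(G)$. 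There is no substantive obstacle here; the only points that require attention are keeping track of the direction of conjugation in the semidirect-product identification, and remembering that $\lambda$ is an antihomomorphism, so the $\rho$--$\lambda$ symmetry passes through inversion rather than a mere relabelling of letters.
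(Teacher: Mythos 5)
Your proof is correct: all three parts check out, and the computations are internally consistent (you work with maps acting on the left, whereas the paper mostly uses exponent notation, but nothing depends on that choice). Note that the paper does not prove this proposition at all --- it records it as well known and refers to \cite[Lemma 3.8]{PrSch} --- so your self-contained argument is the natural thing to supply, and it is the standard one: identify the stabiliser of $1$ in $N = N_{S(G)}(\rho(G))$ by transporting conjugation on the normal regular subgroup $\rho(G)$ back through $\rho$, obtain $N = \Aut(G)\rho(G)$ by a Frattini/orbit--stabiliser argument, use the identity $\rho(g)\lambda(h) = \lambda(h)\rho(g)$ to place $\lambda(G)$ inside $N$, and conjugate by $\inv$ explicitly for part (3). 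One small caveat: the step in (2) where you conclude $\Aut(G)\lambda(G) = N$ because both have order $\Size{\Aut(G)}\cdot\Size{G}$ uses finiteness of $G$, which the proposition as stated does not assume (though it is all the paper ever needs). If you want the statement in full generality, replace the counting by the same Frattini argument run inside $N$ with the transitive subgroup $\lambda(G)$: for $n \in N$ with $n(1) = h$ one has $\lambda(h)^{-1}n \in \Aut(G)$ by part (1), so $N = \lambda(G)\Aut(G) = \Aut(G)\lambda(G)$, and the identical argument inside $N_{S(G)}(\lambda(G))$ (which contains $\lambda(G)$ and $\Aut(G)$, hence $N$) gives $N_{S(G)}(\lambda(G)) = \Aut(G)\lambda(G) = N$. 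This also makes your ``exchange $\rho$ and $\lambda$'' step explicit; as you rightly flag, the only care needed there is that $\lambda$ is an antihomomorphism, so the map induced on $G$ by conjugation on $\lambda(G)$ is still an automorphism because the two anti-isomorphisms compose to a homomorphism.
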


In the following we will refer to $N_{S(G)}(\rho(G))$ as the
\emph{(permutational) holomorph} of $G$, and denote it by $\Hol(G)$.

Let  $G$   be  a  finite  group,   and  let $N  \le  \Hol(G)$ be  a  regular
subgroup. Since $N$ is regular, the map $N \to G$ sending $n \in N$ to
$1^{n}$ is  a bijection.  Thus for each  $g \in G$  there is  a unique
element $\nu(g) \in N$, such that $1^{\nu(g)}  = g$, that is, $\nu : G
\to  N$ is  the  inverse of  $n \mapsto  1^{n}$.  Now  $1^{\nu(g)
  \rho(g)^{-1}} =  1$, so  that $\nu(g)  \rho(g)^{-1} \in  \Aut(G)$ by
Proposition~\ref{prop:right-and-left}\eqref{lemma:AutG-is-stabilizer}.
Therefore for $g \in G$ we can write $\nu(g)$ uniquely in the form
\begin{equation}\label{eq:unique-form}
  \nu(g) = \gamma(g) \rho(g),
\end{equation}
for a suitable map $\gamma : G \to \Aut(G)$. We have
\begin{equation}\label{eq:nu}
  \nu(g) \nu(h)
  =
  \gamma(g) \rho(g) \gamma(h) \rho(h)
  =
  \gamma(g) \gamma(h) \rho(g^{\gamma(h)} h).
\end{equation}
Since $N$ is a subgroup of $S(G)$, $\gamma(g) \gamma(h) \in \Aut(G)$, and the
expression~\eqref{eq:unique-form} is unique, we have
\begin{equation*}
  \gamma(g) \gamma(h) \rho(g^{\gamma(h)} h) = \gamma( g^{\gamma(h)}
  h)\rho(g^{\gamma(h)} h),
\end{equation*}
from which we obtain
\begin{equation}\label{eq:gamma}
  \gamma( g^{\gamma(h)} h )
  =
  \gamma(g) \gamma(h).
\end{equation}
We obtain
\begin{theorem}
  \label{thm:gamma-for-regular}
  Let $(G, \cdot)$ be a finite group. The following data are equivalent.
  \begin{enumerate}
  \item\label{item:regular}
    A regular subgroup $N \le \Hol(G)$.
  \item\label{item:gamma}
    A map $\gamma : G \to \Aut(G)$ such that
    \begin{equation}\label{eq:gamma-for-circ}
      \gamma( g^{\gamma(h)} h )
      =
      \gamma(g) \gamma(h).
    \end{equation}
  \item\label{item:braces} 
    A group operation $\circ$ on $G$ such that
    for $g, h, k \in G$ 
    \begin{equation}
      \label{eq:brace-axiom}
      (g h) \circ k = (g \circ k) k^{-1} (h \circ k),
    \end{equation}
    that is, such that $(G, \cdot, \circ)$ is a (right) skew brace.
  \end{enumerate}
  The data of~\eqref{item:regular}-\eqref{item:braces} are related as follows. 
  \begin{enumerate}[(i)]\label{item:circ}
  \item
    $
    g \circ h = g^{\gamma(h)} h
    $
    for $g, h \in G$. 
  \item 
    Each element of $N$ can be written uniquely in the form $\nu(h) =
    \gamma(h) \rho(h)$, for some $h \in G$.
  \item 
    For $g, h \in G$ one has
    $
    g^{\nu(h)} = g \circ h.
    $
  \item\label{item:gamma-is-hom}
    The map
    \begin{align*}
      \gamma:  (G, \circ)  \to  \Aut(G)
    \end{align*}
    is a morphism.    
  \item\label{item:nu-is-iso}
    The map 
    \begin{align*}
      \setlength\arraycolsep{1.5pt}
      \begin{matrix}
        \nu: & (G, \circ) & \to & N\\
        & h &\mapsto &\gamma(h) \rho(h)
      \end{matrix}
    \end{align*}
    is an isomorphism.    
  \end{enumerate}
\end{theorem}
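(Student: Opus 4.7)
The plan is to prove $(1) \Leftrightarrow (2) \Leftrightarrow (3)$ cyclically, and then read off the explicit correspondences (i)--(v) from the bijections produced along the way. The implication $(1) \Rightarrow (2)$ is essentially already carried out in the discussion preceding the theorem statement: inverting the orbit map $n \mapsto 1^{n}$ produces a bijection $\nu : G \to N$; the factorisation $\Hol(G) = \Aut(G) \rho(G)$ from Proposition~\ref{prop:right-and-left} lets one write $\nu(g) = \gamma(g) \rho(g)$ uniquely; and comparing \eqref{eq:nu} with this uniqueness yields \eqref{eq:gamma-for-circ}.

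For $(2) \Rightarrow (1)$, I would set $N := \{\gamma(g)\rho(g) : g \in G\}$ and verify via \eqref{eq:nu} and \eqref{eq:gamma-for-circ} that $N$ is closed under multiplication; being a finite closed subset of the group $\Hol(G)$, it is a subgroup. The map $g \mapsto \gamma(g)\rho(g)$ is injective (evaluate at $1 \in G$) and satisfies $1^{\nu(g)} = g$, so $N$ is regular, and $g \leftrightarrow \nu(g)$ is a bijection between $G$ and $N$. For $(2) \Rightarrow (3)$, I would set $g \circ h := g^{\gamma(h)} h$ and observe that $\nu$ transports this operation to multiplication in $N$, making $(G, \circ)$ a group isomorphic to $N$. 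The brace axiom \eqref{eq:brace-axiom} is then immediate from the fact that $\gamma(k) \in \Aut(G)$, since $(gh)^{\gamma(k)} k = g^{\gamma(k)} k \cdot k^{-1} \cdot h^{\gamma(k)} k = (g \circ k) k^{-1} (h \circ k)$.

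For $(3) \Rightarrow (2)$, I would define $\gamma(g) : x \mapsto (x \circ g) g^{-1}$; the brace axiom with third argument $g$ shows $\gamma(g) \in \End(G)$, and bijectivity follows because $x \mapsto x \circ g$ is a permutation of $G$. One immediately checks $g^{\gamma(h)} h = ((g \circ h) h^{-1}) h = g \circ h$, so \eqref{eq:gamma-for-circ} reduces to the cocycle identity $\gamma(g \circ h) = \gamma(g)\gamma(h)$; this is the only genuinely nontrivial step and I expect it to be the main obstacle. My plan is to apply the brace axiom to the factorisation $x \circ g = ((x \circ g) g^{-1}) \cdot g$ to obtain
\[
((x \circ g) g^{-1}) \circ h = ((x \circ g) \circ h) \cdot (g \circ h)^{-1} \cdot h,
\]
and then to invoke the associativity of $\circ$ to rewrite $(x \circ g) \circ h$ as $x \circ (g \circ h)$ and conclude $x^{\gamma(g)\gamma(h)} = x^{\gamma(g \circ h)}$.

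Once the three items are equivalent, the concrete relations (i)--(v) can be read off from the constructions: (i) and (ii) are the defining formulas; (iii) is the computation $g^{\nu(h)} = g^{\gamma(h)\rho(h)} = g^{\gamma(h)} h = g \circ h$; (iv) is a restatement of \eqref{eq:gamma-for-circ} via (i); and (v) follows because $\nu$ is by construction a bijection intertwining $\circ$ with multiplication in $N$, as established in the $(2) \Rightarrow (1)$ step. The remaining work beyond the main obstacle is essentially book-keeping around the semidirect product decomposition of $\Hol(G)$ provided by Proposition~\ref{prop:right-and-left}.
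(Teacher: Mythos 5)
Your proposal is correct and follows essentially the same route as the paper: the $(1)\Leftrightarrow(2)$ part is exactly the computation the paper carries out before the theorem (unique factorisation $\nu(g)=\gamma(g)\rho(g)$ in $\Hol(G)=\Aut(G)\rho(G)$, closure giving the GFE), and your $(2)\Leftrightarrow(3)$ argument spells out precisely the correspondences the paper summarises in its table and otherwise delegates to the cited results \cite[Theorem 1.2]{p4} and \cite[Theorem 4.2]{skew} (brace axiom $\leftrightarrow$ $\gamma(g)\in\End(G)$, associativity of $\circ$ $\leftrightarrow$ the GFE, bijectivity/inverses). In particular your key step for $(3)\Rightarrow(2)$ — applying the brace axiom to $x\circ g=\bigl((x\circ g)g^{-1}\bigr)\cdot g$ and then using associativity of $\circ$ — is a valid and complete verification of the only point the paper does not write out explicitly.
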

This is basically~\cite[Theorem 1.2]{p4}. The equivalence of~\eqref{item:braces}
to  the  other  items of  Theorem~\ref{thm:gamma-for-regular}  follows
from~\cite[Theorem 4.2]{skew};  for the convenience of  the reader, we
provide  the table below detailing the  relations between  the properties  of
$\gamma$ and the brace axioms. In this table, $(G, \cdot)$ is
a group, $\circ$  is an operation on  $G$, and for each $g  \in G$ we
define a  function $\gamma(g) :  G \to G$ by  $x \mapsto (x  \circ g)
\cdot g^{-1}$, so that $x \circ g = x^{\gamma(g)} \cdot g$.

\begin{center}
  \begin{tabular}{p{5.75cm}|p{5.75cm}}
    \multicolumn{1}{c|}{Property of $\circ$}
    &
    \multicolumn{1}{c}{Property of $\gamma$}
    \tabularnewline\hline\hline
    \raggedright
    The brace axiom~\eqref{eq:brace-axiom} of
    Theorem~\ref{thm:gamma-for-regular} holds
    &
    \raggedright
    $\gamma(g)$ is an endomorphism of $G$, for each $g \in G$
    \tabularnewline\hline
    $\circ$ is associative
    &
    \raggedright
    $\gamma$ satisfies~\eqref{eq:gamma-for-circ}
    of Theorem~\ref{thm:gamma-for-regular}
    \tabularnewline\hline
    $\circ$ admits inverses
    &
    \raggedright
    $\gamma(g)$ is bijective, for each $g \in G$
  \end{tabular}
\end{center}
In the  table, the properties  on the  first line are  equivalent. The
properties on  the second  line are  equivalent, under  the assumption
that $\gamma(g)$  is an endomorphism of  $G$, for each $g  \in G$.  On
the third line, the property on  the right implies the property on the
left,  while  to  prove  the left-to-right  implication  one  need  to
assume~\eqref{eq:gamma-for-circ}                                    of
Theorem~\ref{thm:gamma-for-regular}. The fact that $(G, \circ)$ has an
identity follows from the properties in the first line.

\begin{remark}
  In the rest of the paper, every time we discuss a regular subgroup
  as in~\eqref{item:regular} of Theorem~\ref{thm:gamma-for-regular},
  or a $\gamma$ as in~\eqref{item:gamma}, we will employ the rest of
  the notation of Theorem~\ref{thm:gamma-for-regular} without further mention.
\end{remark}

\begin{definition}
  \label{def:GF}
  Let $G$ be a group, $A \le G$, and $\gamma: A \to \Aut(G)$ a
  function.

  $\gamma$ is said to satisfy the \emph{gamma functional
    equation} (or \emph{GFE} for short) if
  \begin{equation*}
    \gamma( g^{\gamma(h)} h )
    =
    \gamma(g) \gamma(h),
    \qquad
    \text{for all $g, h \in A$.}
  \end{equation*}
  
  $\gamma$ is said to be a \emph{relative gamma function} (or
  \emph{RGF} for short) on $A$ if it satisfies the gamma functional
  equation, and $A$ is $\gamma(A)$-invariant.

  If $A = G$, a relative gamma function is simply called a
  \emph{gamma function} (or \emph{GF} for short) on $G$.
\end{definition}
A RGF $\gamma : A \to \Aut(G)$ defines a group operation
$g \circ h =  g^{\gamma(h)} h$ on $A$.

\subsection{A comparison to other results and methods}
\label{subs:AB}

As mentioned in the Introduction, Acri and Bonatto have posted on
arXiv a manuscript~\cite{AcriBonatto_p2q} whose results overlap with
ours. They enumerate the isomorphism  classes of
skew braces of size $p^{2} q$, for odd primes $p$ and $q$, including
thus the case when the Sylow $p$-subgroups are elementary abelian,
which we do not cover in this paper. Equivalently, they enumerate the
conjugacy classes of regular subgroups in the holomorph of groups $G$ of
order $p^{2} q$, for odd primes $p$ and $q$. We provide in addition
the sizes of the conjugacy classes, and include the case $q = 2$. The
common results 
of~\cite{AcriBonatto_p2q} agree with ours.

They organise their classification, according to the algorithm
of~\cite{skew}, which they already employed in~\cite{AcriBonatto_pq},
on the basis 
of  the size  of  what in  our  context is  the  image $\gamma(G)  \le
\Aut(G)$ of the gamma function.

We have already noted that in the literature one usually deals with
\emph{left} skew braces, whereas we use the \emph{right} ones; as
mentioned in the Introduction, we translate results and methods in the
literature into our notation.

In the theory of skew braces $(G, \cdot, \circ)$, a morphism $\lambda :
(G, \circ) \to \Aut(G, \cdot)$ is defined, which is exactly our
$\gamma$. (We reserve $\lambda$ for the left regular representation.)

Several   intermediate  results   of~\cite{AcriBonatto_p2q}  naturally
overlap with  ours, for instance  the fact that $\ker(\gamma)  \le (G,
\circ)$. In our context, an ideal of a skew brace $(G, \cdot, \circ)$
is a subgroup $H \le (G, \cdot)$ which is $\gamma(G)$-invariant; some
results of~\cite{AcriBonatto_p2q} on $\gamma(G)$-invariant Sylow
subgroups are also similar to ours.

\begin{remark}
  Our $\gamma$ are related to the bijective $1$-cocycles
  of~\cite{skew}. Recall that if the group $(H, \bullet)$ acts (on the
  right) on the group $(G,
  \cdot)$ via automorphisms, a map $\pi : H \to G$ is said to be a
  $1$-cocycle if
  \begin{equation*}
    \pi(a \bullet b)
    =
    \pi(a)^{b} \cdot \pi(b),
    \quad
    \text{for $a, b \in H$,}
  \end{equation*}
  where $g^{h}$ denotes the action of $h \in H$ on $g \in G$.
  It is proved in~\cite[Proposition 1.11]{skew} that if $(G, \cdot,
  \circ)$ is a skew brace, then the 
  identity map $\pi : (G, \circ) \to (G, \cdot)$ is a $1$-cocycle, where $G$
  acts on itself, in our notation, by $g^{h} = g^{\gamma(h)}$. In fact
  \begin{equation*}
    \pi(a \circ b)
    =
    a \circ b
    =
    a^{\gamma(b)} \cdot b
    =
    \pi(a)^{b} \cdot \pi(b).
  \end{equation*}
  Conversely,
  a  bijective
  $1$-cocycle induces a skew brace structure $(G, \cdot, \circ)$ via
  \begin{equation*}
    a \circ b = \pi ( \pi^{-1}(a) \bullet \pi^{-1}(b) ),
    \quad
    \text{for $a, b \in G$.}
  \end{equation*}
  In our notation, this means that the function
  \begin{align*}
    \gamma :\ &G \to \Aut(G)\\
    &b \mapsto (a \mapsto \pi ( \pi^{-1}(a) \bullet
    \pi^{-1}(b) ) \cdot b^{-1})
  \end{align*}
  is a GF.
\end{remark}

\subsection{Invariant subgroups}

The  following  proposition is  a  slightly  more general  version  of
\cite[Proposition   3.3]{Tsang2019}

\begin{prop}
  \label{prop:2su3}
  Let $G$ be a finite group, let $H \subseteq G$ and let $\gamma$ be a
  GF on $G$.
  
  Any  two of  the
  following conditions imply the third one:
  \begin{enumerate}
  \item\label{i1} $H \le G$;
  \item\label{i2} $(H,\circ) \le (G, \circ)$;
  \item\label{i3} $H$ is $\gamma(H)$-invariant.
  \end{enumerate}
  If these conditions hold, then $(H,\circ)$ is isomorphic to a regular
  subgroup of $\Hol(H).$ 
\end{prop}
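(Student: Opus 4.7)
The plan pivots on the defining identity $g \circ h = g^{\gamma(h)} \cdot h$ from Theorem~\ref{thm:gamma-for-regular}, together with its inverse form $g \cdot h = g^{\gamma(h)^{-1}} \circ h$ (obtained by replacing $g$ with $g^{\gamma(h)^{-1}}$ in the first identity and using that $\gamma(h) \in \Aut(G)$). These two identities let me translate closure under one operation into closure under the other, provided I can control how $\gamma(h)$ and $\gamma(h)^{-1}$ act on $H$.

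First I would dispatch the three implications. For \eqref{i1} and \eqref{i2} implying \eqref{i3}: given $g, h \in H$, the element $g \circ h$ lies in $H$ by \eqref{i2} and $h^{-1} \in H$ by \eqref{i1}, so $g^{\gamma(h)} = (g \circ h) \cdot h^{-1} \in H$, which gives \eqref{i3}. For \eqref{i1} and \eqref{i3} implying \eqref{i2}: for $g, h \in H$ we have $g^{\gamma(h)} \in H$ by \eqref{i3} and so $g \circ h = g^{\gamma(h)} \cdot h \in H$ by \eqref{i1}; since $H$ is finite and contains the common identity of $(G, \cdot)$ and $(G, \circ)$, this closure upgrades $(H, \circ)$ to a subgroup of $(G, \circ)$. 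The subtlest case is \eqref{i2} and \eqref{i3} implying \eqref{i1}, and this is where I expect the only real obstacle: to use the inverse identity I need $\gamma(h)^{-1}$ to preserve $H$, not just $\gamma(h)$. The key point is that \eqref{i3} says $\gamma(h)$ maps the finite set $H$ into itself, so its restriction to $H$ is a permutation of $H$, whence $\gamma(h)^{-1}$ also preserves $H$. Therefore, for $g, h \in H$, $g^{\gamma(h)^{-1}} \in H$, and then $g \cdot h = g^{\gamma(h)^{-1}} \circ h \in H$ by \eqref{i2}. Finiteness of $H$ promotes $\cdot$-closure to subgroup status, giving \eqref{i1}.

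Finally, assume all three conditions hold. The restriction $\gamma|_{H} : H \to \Aut(G)$ actually lands in $\Aut(H)$: for each $h \in H$, $\gamma(h)$ restricts to an injective endomorphism of $H$ by \eqref{i1} and \eqref{i3}, hence to an automorphism by finiteness. The gamma functional equation for $\gamma|_H$ is inherited from the one for $\gamma$, since \eqref{i1} and \eqref{i3} ensure that the argument $g^{\gamma(h)} \cdot h$ still lies in $H$ when $g, h \in H$. Applying Theorem~\ref{thm:gamma-for-regular} to the group $H$ and the GF $\gamma|_H$ then produces a regular subgroup of $\Hol(H)$ isomorphic to $(H, \circ)$, as claimed.
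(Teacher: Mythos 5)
Your proof is correct and follows essentially the same route as the paper: both arguments rest on the pair of identities $h_1 \circ h_2 = h_1^{\gamma(h_2)} h_2$ and $h_1^{\gamma(h_2)^{-1}} \circ h_2 = h_1 h_2$, and both obtain the final claim by restricting $\gamma$ to a GF $H \to \Aut(H)$ and invoking Theorem~\ref{thm:gamma-for-regular}. The only difference is that you spell out the closure-plus-finiteness details (including why $\gamma(h)^{-1}$ preserves $H$) that the paper leaves implicit.
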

\begin{proof}
  The equivalence follows from the fact that for $h_1,h_2\in H$ we have
  \begin{equation*}
    h_{1} \circ h_{2} = h_{1}^{\gamma(h_{2})} h_{2}
    \qquad\text{and}\qquad
    h_{1}^{\gamma(h_{2})^{-1}} \circ h_{2} = h_{1} h_{2}.
  \end{equation*}
  If the conditions hold, the fact that  $H$ is $\gamma(H)$-invariant
  implies that we have a map
  \begin{align*}
    \gamma' :\ &H \to \Aut(H)\\
    &h \mapsto \gamma(h)_{\restriction H}
  \end{align*}
  which satisfies the GFE because  $\gamma$ does, so that $\gamma'$ is
  a GF on $H$.  
  If $\circ'$ is
  the group operation on $H$ associated to $\gamma'$, the identity map
  \begin{equation*}
    (H, \circ') \to (G, \circ)
  \end{equation*}
  is a group morphism, as for $h_1, h_2 \in H$ one has
  \begin{equation*}
    h_1 \circ' h_2 = h_1^{\gamma'(h_2)} h_2 = h_1^{\gamma(h_2)} h_2 = h_1 \circ h_2.
  \end{equation*}
  It follows that $(H, \circ')=(H,\circ)$ is isomorphic to a regular subgroup  of $\Hol(H).$
\end{proof}

The subgroups $H$ verifying the conditions of Proposition~\ref{prop:2su3} in the language of braces are called {\it sub-skew braces}.
\subsection{An application: nilpotent groups}
\label{subs:nilpotent}

The following result is due to N.~P.~Byott.
\begin{theorem}[\protect{\cite[Theorem 1]{Byott13}}]
  Let $\Gamma$ be a finite nilpotent group of order $n$.

  Then for each nilpotent group $G$ of order $n$ we have
  \begin{equation*}
    e(\Gamma, G)
    =
    \prod_{p} e(\Gamma_{p}, G_{p}),
  \end{equation*}
  where  $p$ ranges  over the  primes dividing  $n$, and  $\Gamma_{p}$
  resp.~$G_{p}$ denote the Sylow $p$-subgroups of $\Gamma$ resp.~$G$.
\end{theorem}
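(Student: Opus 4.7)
The plan is to reduce to counting regular subgroups via Theorem~\ref{th:byott96}. For nilpotent $G$ and $\Gamma$ we have $\Aut(G)\cong\prod_{p}\Aut(G_{p})$ (because each Sylow subgroup of a nilpotent group is characteristic), and similarly for $\Gamma$. Hence
\begin{equation*}
  e(\Gamma,G)
  =
  \frac{\Size{\Aut(\Gamma)}}{\Size{\Aut(G)}}\,e'(\Gamma,G)
  =
  \prod_{p}\frac{\Size{\Aut(\Gamma_{p})}}{\Size{\Aut(G_{p})}}\,e'(\Gamma,G),
\end{equation*}
and it suffices to prove $e'(\Gamma,G)=\prod_{p}e'(\Gamma_{p},G_{p})$. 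By Theorem~\ref{thm:gamma-for-regular}, $e'(\Gamma,G)$ counts the GFs $\gamma$ on $G$ such that $(G,\circ)\cong\Gamma$, so I will exhibit a bijection between such GFs and tuples $(\gamma_{p})_{p}$, where $\gamma_{p}$ is a GF on $G_{p}$ with $(G_{p},\circ)\cong\Gamma_{p}$.

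First I would show that any GF $\gamma$ on $G$ restricts to the Sylow subgroups. Since each $G_{p}$ is characteristic in $(G,\cdot)$, it is $\Aut(G)$-invariant, hence $\gamma(G)$-invariant, and in particular $\gamma(G_{p})$-invariant. By Proposition~\ref{prop:2su3}, $G_{p}\le(G,\circ)$. Because $(G,\circ)\cong\Gamma$ is nilpotent of order $|G|$, counting orders forces $G_{p}$ to be precisely the Sylow $p$-subgroup of $(G,\circ)$; in particular $(G_{p},\circ)\cong\Gamma_{p}$. Restricting $\gamma$ to $G_{p}$ and postcomposing with the restriction $\Aut(G)\to\Aut(G_{p})$ yields a function $\gamma_{p}:G_{p}\to\Aut(G_{p})$ which inherits the GFE from $\gamma$.

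The main technical step, and what I expect to be the key obstacle, is showing that $\gamma(g_{p})$ acts \emph{trivially} on $G_{q}$ for $q\neq p$, so that $\gamma(g_{p})$ lies in the factor $\Aut(G_{p})\le\prod_{r}\Aut(G_{r})=\Aut(G)$. The argument is that in the nilpotent group $(G,\circ)$ elements of the Sylow $p$- and $q$-subgroups $G_{p}$ and $G_{q}$ commute, so for $g_{p}\in G_{p}$ and $h_{q}\in G_{q}$
\begin{equation*}
  g_{p}^{\gamma(h_{q})}\cdot h_{q}
  =
  g_{p}\circ h_{q}
  =
  h_{q}\circ g_{p}
  =
  h_{q}^{\gamma(g_{p})}\cdot g_{p}.
\end{equation*}
The $\gamma$-invariance of the Sylow subgroups puts the two sides into $G_{p}\cdot G_{q}$, and uniqueness of the factorisation of an element of $G$ as a product of its Sylow components gives $g_{p}^{\gamma(h_{q})}=g_{p}$ and $h_{q}^{\gamma(g_{p})}=h_{q}$, as desired.

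With this in hand, the GFE specialised to $g=g_{p}\in G_{p}$ and $h=g_{q}\in G_{q}$ (with $p\neq q$) reads $\gamma(g_{p}\cdot g_{q})=\gamma(g_{p})\gamma(g_{q})$ (the action of $\gamma(g_{q})$ on $g_{p}$ being trivial by the previous step), and by induction on the number of distinct prime factors of $|g|$ one gets $\gamma(g)=\prod_{p}\gamma_{p}(g_{p})$ for the Sylow decomposition $g=\prod_{p}g_{p}$. Thus $\gamma$ is determined by the tuple $(\gamma_{p})_{p}$. Conversely, given arbitrary GFs $\gamma_{p}$ on $G_{p}$ with $(G_{p},\circ_{p})\cong\Gamma_{p}$, I would \emph{define} $\gamma:G\to\Aut(G)=\prod_{p}\Aut(G_{p})$ by $\gamma\bigl(\prod_{p}g_{p}\bigr)=\prod_{p}\gamma_{p}(g_{p})$ and verify the GFE on $G$ directly from the GFEs on each $G_{p}$, using that elements of distinct Sylow subgroups commute and that each $\gamma_{p}(g_{p})$ acts trivially outside $G_{p}$. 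The induced operation satisfies $g\circ h=\prod_{p}(g_{p}\circ_{p}h_{p})$, so $(G,\circ)\cong\prod_{p}\Gamma_{p}\cong\Gamma$. This establishes the bijection and completes the proof.
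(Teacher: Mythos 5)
Your proposal is correct and follows essentially the same route as the paper: reduce via $\Aut(G)\cong\prod_p\Aut(G_p)$ to the identity $e'(\Gamma,G)=\prod_p e'(\Gamma_p,G_p)$, use Proposition~\ref{prop:2su3} and the characteristic Sylow subgroups to see $(G_p,\circ)$ is the Sylow $p$-subgroup of $(G,\circ)$, and then the commuting computation $g_p^{\gamma(h_q)}h_q=g_p\circ h_q=h_q\circ g_p=h_q^{\gamma(g_p)}g_p$ to show $\gamma(G_p)$ acts trivially on $G_q$, giving the factorisation $\gamma(x)=\prod_p\gamma_p(x_p)$. You merely spell out the converse (reassembly) direction of the bijection a bit more explicitly than the paper does, which is fine.
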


We now give an alternate proof of Byott's result, using gamma
functions.

\begin{proof}
  Let $p_{1},  \dots, p_{k}$ be the distinct primes dividing $n$.
  The  finite nilpotent  group $G$  is the  direct product  of its
  distinct Sylow subgroups, each of which is characteristic in $G$, so we
  have
  \begin{equation}
    \label{eq:aut_prod}
    \Aut(G) = \prod_{i=1}^{k} \Aut(G_{p_{i}}).
  \end{equation}
  and the same holds for $\Gamma$.
  Therefore, in view of \eqref{eq:e_vs_e-prime}, we can rephrase
  Byott's result as 
  \begin{equation}
    \label{eq:Byott_revisited}
    e'(\Gamma, G)
    =
    \prod_{i=1}^{k} e'((\Gamma_{p_{i}}, G_{p_{i}}),
  \end{equation}
  As we already noted in the Introduction
  \begin{equation*}
    e'(\Gamma, G)
    =
    \Size{ \Set{ \text{$\gamma$ GF on $G$} : (G,\circ)\cong\Gamma }}.
  \end{equation*}

  Let now $\gamma$ be a gamma function on $G$ such that
  $(G,\circ)\cong\Gamma$. Let 
  $p, q$ be distinct primes dividing $n$, and let $a$ be a
  $p$-element, and $b$ a $q$-element of $G$ and $(G, \circ)$. Since
  $G$ and $(G, \circ)$ are nilpotent, $a$ and $b$ commute in both
  groups. We thus have
  \begin{equation*}
    a^{\gamma(b)} b = a \circ b = b \circ a = b^{\gamma(a)} a = a b^{\gamma(a)},
  \end{equation*}
  which implies $b^{\gamma(a)} =
  b$ (and $a^{\gamma(b)} = a$).

  This shows that  the  image  $\gamma(G_{p})$   of  the  Sylow
  $p$-subgroup of $G$  under  $\gamma$ acts  trivially  on the  Sylow
  $q$-subgroups  of  $G$,  for  $q  \ne p$.  The  composition  of  the
  restriction of  $\gamma$ to $G_{p}$,  followed by the  projection of
  $\Aut(G)$  onto  $\Aut(G_{p})$,  is  clearly  a  gamma  function  on
  $G_{p}$.

  It follows that every gamma function $\gamma$ on $G$ is obtained as
  \begin{equation*}
    \gamma(x)
    =
    \gamma_{1}(x_{1}) \dots \gamma_{k}(x_{k}),
  \end{equation*}
  where $x$  is uniquely written  as $x_{1} \cdots x_{k}$,  with $x_{i}
  \in G_{p_{i}}$, and $\gamma_{i}  : G_{p_{i}} \to \Aut(G_{p_{i}}) \le
  \Aut(G)$    is   a    gamma   function    on   $G_{p_{i}}$.    
  Proposition~\ref{prop:2su3} now implies  that  the Sylow
  $p$-subgroup 
  $(G, \circ)_{p}$ of $(G, \circ)$ is $(G_{p}, \circ)$, which is then
  isomorphic to $\Gamma_p$.  
  This proves equality \eqref{eq:Byott_revisited}.
\end{proof}

\subsection{Isomorphism of Sylow $p$-subgroups}

From Proposition~\ref{prop:2su3} we have that if one of the  Sylow
$p$-subgroups $H$ 
of $G$  is invariant  under $\gamma(H)$, then  the isomorphism
type of the Sylow  $p$-subgroups of $(G,\circ)$ is to be  found among
the  isomorphism types of regular  subgroups of 
$\Hol(H)$. If there is no such invariant Sylow $p$-subgroup, then
it is conceivable that the  Sylow $p$-subgroups of regular subgroups
of    $\Hol(G)$    could    take    further    isomorphism    types.

For some classes of $p$-subgroups the criterion given in
Proposition~\ref{prop:2su3} can be made more explicit. 

\begin{corollary}
  \label{cor:sylow-gen}
   \label{cor:e=0}
   Let  $G$ and $\Gamma$ be finite  groups. Suppose $e'(\Gamma,G)\ne0$. Let $N\le\Hol(G)$ be a regular subgroup isomorphic to $\Gamma$, and $\gamma$ the GF associated to $N$. 
   
   If $p$ is  an odd prime and $H$ is a $\gamma(H)$-invariant $p$-subgroup of
  $G$, then 
  \begin{enumerate}
  \item if $H$ is cyclic then $H \cong
    (H, \circ)$;
  \item  if $H$ is abelian and of rank  $m$, with $m < p - 1$, or
    $m = 2$ and  $p = 3$, and $(H,\circ)$ is abelian  too, then $H \cong
    (H, \circ)$.
  \end{enumerate}  
\end{corollary}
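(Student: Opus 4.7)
The plan is to reduce both claims to existing classification results on regular subgroups of the holomorph of a small finite abelian $p$-group. By hypothesis $H \le G$ and $H$ is $\gamma(H)$-invariant, so Proposition~\ref{prop:2su3} applies in full: the restriction $\gamma' : H \to \Aut(H)$, $h \mapsto \gamma(h)_{\restriction H}$, is a gamma function on $H$, its associated operation $\circ'$ on $H$ agrees with the restriction of $\circ$, and the last assertion of Proposition~\ref{prop:2su3} yields that $(H, \circ) = (H, \circ')$ is isomorphic to a regular subgroup of $\Hol(H)$.

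For part (1), since $H$ is a cyclic $p$-group with $p$ odd, I would invoke the generalisation of Kohl's theorem proved in Subsection~\ref{subs:Kohl}, which asserts that every regular subgroup of $\Hol(C_{p^{n}})$ with $p$ odd is itself cyclic. As $(H, \circ)$ is such a regular subgroup and has the same order as $H$, it must be cyclic, and therefore isomorphic to $H$.

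For part (2), the hypothesis that $(H, \circ)$ is itself abelian means it suffices to show that every abelian regular subgroup of $\Hol(H)$ is isomorphic to $H$, whenever $H$ is a finite abelian $p$-group of rank $m$ with $m < p - 1$, or with $(p, m) = (3, 2)$. For $m < p - 1$ this is the Featherstonhaugh--Caranti--Dalla Volta--Sala--Childs-type theorem on abelian regular subgroups of the holomorph of an abelian $p$-group of small rank. The boundary case $(p, m) = (3, 2)$ lies outside the hypothesis $m < p - 1$ and has to be treated separately; here $H \cong C_{3} \times C_{3}$ and the claim can be settled by direct inspection of $\Hol(C_{3} \times C_{3})$, verifying that no regular subgroup of it is isomorphic to $C_{9}$.

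The principal obstacle is that parts (1) and (2) rely on two genuinely different external inputs: a cyclic-specific result (Kohl) and a rank-versus-prime result for abelian $p$-groups. Neither step is itself long, but the boundary case $(p, m) = (3, 2)$ is a real one that the natural general statement $m < p - 1$ does not cover, so care is needed to produce a self-contained argument there rather than a blanket citation.
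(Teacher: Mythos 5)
Your overall strategy is the same as the paper's: use Proposition~\ref{prop:2su3} to see that $(H,\circ)$ is (isomorphic to) a regular subgroup of $\Hol(H)$, and then quote classification results for regular subgroups of holomorphs of small abelian $p$-groups. However, as written there are two concrete problems. For part (1), the result you invoke from Subsection~\ref{subs:Kohl} does not apply: that generalisation of Kohl's theorem concerns groups of order $mp$ with $p \nmid m$ and $p \nmid \Size{\Aut(M)}$, so it says nothing about $\Hol(\cC_{p^{n}})$ once $n \ge 2$ (there $m = p^{n-1}$ is divisible by $p$). The fact you need --- every regular subgroup of the holomorph of a cyclic $p$-group, $p$ odd, is cyclic --- is true, but it comes from Kohl's 1998 paper~\cite{kohl98}, or from \cite[Corollary, p.~680]{Rump}, \cite[Proposition 4]{CreSal2019}, or \cite[(8.6) Proposition]{ChildsBook}, which is exactly what the paper cites; your internal reference would not support the step.

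For part (2), the boundary case $(p,m) = (3,2)$ is handled incorrectly: the hypothesis is that $H$ is an abelian $3$-group of \emph{rank} $2$, not that $H$ is elementary abelian, so $H$ could be $\cC_{9} \times \cC_{3}$, $\cC_{9} \times \cC_{9}$, and so on; your proposed direct inspection of $\Hol(\cC_{3} \times \cC_{3})$ settles only one instance of the stated claim. (In the paper's later application $H$ is a Sylow $p$-subgroup of a group of order $p^{2}q$, so there $H \cong \cC_{3} \times \cC_{3}$ would indeed be forced, but the corollary is stated, and used, at the level of arbitrary $\gamma(H)$-invariant abelian $p$-subgroups.) The paper does not need an ad hoc computation here: both the case $m < p-1$ and the case $m = 2$, $p = 3$ are covered by \cite[Theorem 1 and Proposition 4]{FCC12}, the second reference being stated precisely for the boundary configuration. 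So either cite \cite{FCC12} for that case as well, or supply an argument valid for all rank-two abelian $3$-groups, not just the elementary abelian one.
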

\begin{proof}
  If $H$ is a cyclic $p$-group with $p > 2$, by~\cite[Corollary,
  ~680]{Rump} or by~\cite[Proposition 4]{CreSal2019},  every regular
  subgroup of $\Hol(H)$ is cyclic (see also~\cite[(8.6)
  Proposition]{ChildsBook}).  

  In the case when $H$ is abelian of rank $m$, with $m + 1 < p$, or $m
  = 2$
  and $p = 3$, by~\cite[Theorem  1 and Proposition  4]{FCC12}, all the
  abelian subgroups of  $\Hol(H)$ are isomorphic to $H$.  
  
  Both statements now follow  from Proposition~\ref{prop:2su3}.
\end{proof}

In Theorem~\ref{teo:sylow} we will show  that in the case of groups of
order  $p^{2}  q$   with  $p  >  2$  the   conditions  of  Proposition
\ref{prop:2su3} are fulfilled for $H$  a Sylow $p$-subgroup of $G$ and
for each GF.   This will simplify our classification  since it implies
that $e(\Gamma,G)=0$ whenever the  Sylow $p$-subgroups of $\Gamma$ and
$G$    are    not   isomorphic    (we    have    already   noted    in
Remark~\ref{rem:two-wont-do} that this does not  hold for $p = 2$). We
will  also prove  (see Theorem~\ref{number})  that in  that case  the
condition of having isomorphic  Sylow $p$-subgroups is also sufficient
for $e(\Gamma,G)\ne 0$.

\subsection{Tools of the trade}

We now collect several facts related to the gamma functions, which we
are going to exploit for our classification.

We begin by rephrasing conjugacy of regular subgroups within $\Hol(G)$
in terms of gamma functions, and recording a simple property that will
be useful in Section~\ref{sec:proof}. Conjugacy of regular subgroups of the
holomorph of the group $(G, \cdot)$ is equivalent to isomorphism of skew
braces $(G, \cdot, \circ)$, and to a certain equivalence of Hopf-Galois structures.

Note that since $N$ is regular, we have $\Hol(G) = \Aut(G) N$, so that
the conjugates of $N$ under 
$\Hol(G)$ coincide with the conjugates under $\Aut(G)$.
Now
\cite[Proposition A.3]{SV2018} (see also the last statement of~\cite[Theorem
  1]{affine}) states that, given a group $G = (G, \cdot)$, there is a bijection
between isomorphism classes of skew braces $(G, \cdot, \circ)$,
and classes of regular subgroups of $\Hol(G)$ under conjugation by
elements of $\Aut(G)$.  We give the simple translation of the
latter in terms of gamma functions.
\begin{lemma}
  \label{lemma:conjugacy}
  Let $G$ be a group, $N$ a regular subgroup of $\Hol(G)$, and
  $\gamma$ the associated gamma function.

  Let $\beta \in \Aut(G)$.
  \begin{enumerate}
  \item 
    The gamma function $\gamma^{\beta}$ associated to the regular subgroup
    $N^{\beta}$  is given by
    \begin{equation}
      \label{eq:conjugacy}
      \gamma^{\beta}(g)
      =
      \gamma(g^{\beta^{-1}})^{\beta}
      =
      \beta^{-1} \gamma(g^{\beta^{-1}}) \beta,
    \end{equation}
    for $g \in G$.
  \item
    \label{item:invariance-under-conjugacy}
    If $H \le G$ is invariant under $\gamma(H)$, then $H^{\beta}$ is
    invariant under $\gamma(H^{\beta})$.
  \end{enumerate}  
\end{lemma}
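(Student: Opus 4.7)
The plan is to compute the elements of the conjugate subgroup $N^{\beta}$ directly, using the canonical decomposition $\nu(g) = \gamma(g)\rho(g)$ from Theorem~\ref{thm:gamma-for-regular}, and then to read off $\gamma^{\beta}$ from the uniqueness of that decomposition. The one preliminary fact I need is that conjugation by $\beta \in \Aut(G)$ preserves the splitting $\Hol(G) = \Aut(G)\rho(G)$ of Proposition~\ref{prop:right-and-left}: it fixes $\Aut(G)$ setwise (trivially, as $\beta$ lies in $\Aut(G)$), while on $\rho(G)$ it acts by $\rho(g)^{\beta} = \rho(g^{\beta})$. This identity is a one-line check: for $x \in G$,
\[
x^{\rho(g)^{\beta}} = \bigl((x^{\beta^{-1}}) g\bigr)^{\beta} = x \cdot g^{\beta} = x^{\rho(g^{\beta})}.
\]

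Granting this, for each $g \in G$ I would compute
\[
\nu(g)^{\beta} = \gamma(g)^{\beta} \cdot \rho(g)^{\beta} = \gamma(g)^{\beta} \cdot \rho(g^{\beta}),
\]
so that, after reparametrising with $h = g^{\beta}$, the elements of $N^{\beta}$ are precisely $\gamma(h^{\beta^{-1}})^{\beta} \rho(h)$ as $h$ ranges over $G$. Since $N^{\beta}$ is again a regular subgroup of $\Hol(G)$ (regularity is preserved by conjugation inside $S(G)$), Theorem~\ref{thm:gamma-for-regular} yields a unique expression of the form $\gamma^{\beta}(h) \rho(h)$ for its elements, whence $\gamma^{\beta}(h) = \gamma(h^{\beta^{-1}})^{\beta} = \beta^{-1} \gamma(h^{\beta^{-1}}) \beta$, as claimed.

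Part~(2) then follows by a direct verification using the formula just established. Any element of $H^{\beta}$ can be written as $k = h^{\beta}$ with $h \in H$, and then $\gamma^{\beta}(k) = \gamma(h)^{\beta} = \beta^{-1}\gamma(h)\beta$. Applying this automorphism to an arbitrary element $y^{\beta}$ of $H^{\beta}$ gives
\[
(y^{\beta})^{\beta^{-1}\gamma(h)\beta} = y^{\gamma(h) \beta} = (y^{\gamma(h)})^{\beta},
\]
which lies in $H^{\beta}$ because $y^{\gamma(h)} \in H$ by the $\gamma(H)$-invariance of $H$. The only point requiring care throughout is keeping straight the order of function composition in $S(G)$ under the right-action convention of the paper; beyond that, the argument is purely a matter of unwinding the definitions, and I do not expect any genuine obstacle.
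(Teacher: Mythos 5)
Your proof is correct and follows essentially the same route as the paper: conjugate $\nu(g) = \gamma(g)\rho(g)$ to get $\gamma(g)^{\beta}\rho(g^{\beta})$, identify its $\Aut(G)$-component as $\gamma^{\beta}(g^{\beta})$ (you via uniqueness of the $\Aut(G)\rho(G)$ decomposition, the paper via noting this element sends $1$ to $g^{\beta}$ — the same point), and then verify part (2) by the identical direct computation $(h_1^{\beta})^{\gamma^{\beta}(h_2^{\beta})} = (h_1^{\gamma(h_2)})^{\beta} \in H^{\beta}$.
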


We will refer to the action of $\Aut(G)$ on $\gamma$ of the
Lemma as \emph{conjugation}.

\begin{proof}
  For  $x   \in  G$   we  have  $\nu(x)^{\beta}   =  \gamma(x)^{\beta}
  \rho(x^{\beta})$.  Since $\nu(x)^{\beta}$  takes $1$ to $x^{\beta}$,
  we  have  $\gamma^{\beta}(x^{\beta})   =  \gamma(x)^{\beta}$,  which
  yields~\eqref{eq:conjugacy} substituting $g = x^{\beta}$.

  If $H \le G$ is invariant under $\gamma(H)$, and $h_{1}, h_{2} \in
  H$, then
  \begin{equation*}
    (h_{1}^{\beta})^{\gamma^{\beta}(h_{2}^{\beta})}
    =
    h_{1}^{\beta \beta^{-1} \gamma(h_{2}) \beta}
    =
    (h_{1}^{\gamma(h_{2})})^{\beta}
    \in
    H^{\beta}.
  \end{equation*}
\end{proof}

We now record two simple facts, which we will be using repeatedly, 
concerning inverses and conjugacy in the group $(G, \circ)$ of 
Theorem~\ref{thm:gamma-for-regular}. We write
$a^{\ominus 1}$ for the inverse of $a \in G$ in $(G, \circ)$.

\begin{lemma}
  \label{lemma:inverse_and_conjugacy}
  In the notation of Theorem~\ref{thm:gamma-for-regular}, we have, for
  $a, b \in G$,
  \begin{equation*}
    a^{\ominus 1}
    =
    a^{- \gamma(a)^{-1}},
  \end{equation*}
  and
  \begin{equation*}
    a^{\ominus 1} \circ b \circ a
    =
    a^{-\gamma(a)^{-1} \gamma(b) \gamma(a)}  b^{\gamma(a)} a.
  \end{equation*}
\end{lemma}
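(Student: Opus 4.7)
The plan is to derive both identities by direct manipulation of the defining formula $g \circ h = g^{\gamma(h)} \cdot h$ from Theorem~\ref{thm:gamma-for-regular}. No deep input is needed beyond the fact that each $\gamma(g)$ is an automorphism of $(G, \cdot)$ and that $\circ$ is associative with identity $1$; the latter follows once we recall that $\nu : (G, \circ) \to N$ is a group isomorphism.

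For the first identity, I would verify that $a^{-\gamma(a)^{-1}}$ serves as a left inverse of $a$ in $(G, \circ)$. Directly applying the definition of $\circ$,
\begin{equation*}
  a^{-\gamma(a)^{-1}} \circ a
  =
  \left( a^{-\gamma(a)^{-1}} \right)^{\gamma(a)} \cdot a
  =
  a^{-\gamma(a)^{-1}\gamma(a)} \cdot a
  =
  a^{-1} \cdot a
  =
  1.
\end{equation*}
Since inverses in the group $(G, \circ)$ are unique, this forces $a^{\ominus 1} = a^{-\gamma(a)^{-1}}$, as claimed.

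For the second identity, I would expand $a^{\ominus 1} \circ b \circ a$ from the left, using associativity of $\circ$, the definition of the operation, and the fact that $\gamma(a)$ is a group automorphism of $(G, \cdot)$ (so it respects products). This yields
\begin{equation*}
  (a^{\ominus 1} \circ b) \circ a
  =
  \left[ (a^{\ominus 1})^{\gamma(b)} \cdot b \right] \circ a
  =
  \left[ (a^{\ominus 1})^{\gamma(b)} \cdot b \right]^{\gamma(a)} \cdot a
  =
  (a^{\ominus 1})^{\gamma(b)\gamma(a)} \cdot b^{\gamma(a)} \cdot a.
\end{equation*}
Substituting the formula $a^{\ominus 1} = a^{-\gamma(a)^{-1}}$ from the first part gives
\begin{equation*}
  a^{\ominus 1} \circ b \circ a
  =
  a^{-\gamma(a)^{-1}\gamma(b)\gamma(a)} \cdot b^{\gamma(a)} \cdot a,
\end{equation*}
which is exactly the desired expression.

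There is no real obstacle: the proof is a two-line unpacking of the skew brace operation. The only point demanding a little attention is making sure to invoke the correct side of the inverse definition in the first step (checking that $a^{-\gamma(a)^{-1}}$ is a \emph{left} $\circ$-inverse of $a$, so that uniqueness in the group $(G, \circ)$ applies) and to apply $\gamma(a)$ correctly to the product $(a^{\ominus 1})^{\gamma(b)} \cdot b$ in the second step.
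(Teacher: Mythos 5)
Your proof is correct and follows essentially the same route as the paper: the paper solves $z \circ a = z^{\gamma(a)} a = 1$ for $z$ where you verify the candidate $a^{-\gamma(a)^{-1}}$ is a left $\circ$-inverse, and both arguments then expand $a^{\ominus 1} \circ b \circ a$ by the same two applications of $x \circ y = x^{\gamma(y)} y$ (the paper merely substitutes the formula for $a^{\ominus 1}$ before expanding rather than after).
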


\begin{proof}
  If $z$ is the inverse of $a$ in $(G, \circ)$, we have $1 = z \circ a
  = z^{\gamma(a)} a$, whence $z = a^{- \gamma(a)^{-1}}$.

  \begin{align*}
    a^{\ominus 1} \circ b \circ a
    &=
    a^{- \gamma(a)^{-1}} \circ b \circ a
    \\&=
    (a^{- \gamma(a)^{-1} \gamma(b)}  b) \circ a
    \\&=
    a^{-\gamma(a)^{-1} \gamma(b) \gamma(a)} b^{\gamma(a)} a.
  \end{align*}
\end{proof}

\begin{remark}
  Note, for later usage, that~\eqref{eq:gamma-for-circ} can be
  rephrased, setting $k = g^{\gamma(h)}$, as
  \begin{equation}\label{eq:gamma-for-dot}
    \gamma(k h) = \gamma(k^{\gamma(h)^{-1}}) \gamma(h).
  \end{equation}
\end{remark}

\begin{lemma}
  \label{lemma:kergamma}
  Let $G$ be a finite group, and $\gamma$ a GF on $G$.
  We have 
  \begin{enumerate}
  \item 
    $\ker(\gamma) \norm (G, \circ)$, and 
  \item $\ker(\gamma) \le G$.
  \end{enumerate}
\end{lemma}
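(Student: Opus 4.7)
The plan is to leverage what is already recorded in the preceding theorem and lemma, so that both statements become nearly immediate.

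For part (i), I would simply appeal to Theorem~\ref{thm:gamma-for-regular}\eqref{item:gamma-is-hom}, which says that $\gamma : (G, \circ) \to \Aut(G)$ is a group morphism. The kernel of a group morphism is always a normal subgroup of the source, so $\ker(\gamma) \norm (G, \circ)$ with no further work.

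For part (ii), the key observation is that on $\ker(\gamma)$ the two operations $\cdot$ and $\circ$ coincide, because for $g, h \in \ker(\gamma)$ we have
\begin{equation*}
  g \circ h = g^{\gamma(h)} \cdot h = g \cdot h,
\end{equation*}
using $\gamma(h) = \id$. Similarly, Lemma~\ref{lemma:inverse_and_conjugacy} gives $g^{\ominus 1} = g^{-\gamma(g)^{-1}} = g^{-1}$ for $g \in \ker(\gamma)$. Since part (i) already provides that $\ker(\gamma)$ is a subgroup of $(G, \circ)$, closure under $\cdot$ and under taking $\cdot$-inverses follows, and therefore $\ker(\gamma) \le (G, \cdot)$.

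There is no real obstacle here: the content is packaged entirely by the morphism statement in Theorem~\ref{thm:gamma-for-regular} together with the inversion formula of Lemma~\ref{lemma:inverse_and_conjugacy}. If one preferred not to invoke the morphism property, one could argue (ii) directly from the GFE in the equivalent form~\eqref{eq:gamma-for-dot}: for $g, h \in \ker(\gamma)$, setting $k = g$ and using $\gamma(h) = \id$ yields $\gamma(gh) = \gamma(g) \gamma(h) = \id$, so $gh \in \ker(\gamma)$; and inversion then follows from the identity $\gamma(g g^{-1}) = \gamma(1) = \id$ together with finiteness of $G$.
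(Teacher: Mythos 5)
Your proof is correct and follows essentially the same route as the paper: part (i) is exactly the paper's argument via Theorem~\ref{thm:gamma-for-regular}\eqref{item:gamma-is-hom}, and for part (ii) the paper simply cites Proposition~\ref{prop:2su3} (noting $\ker(\gamma)$ is trivially $\gamma(\ker(\gamma))=\Set{1}$-invariant), whereas you inline the one-line computation $g \circ h = g^{\gamma(h)} \cdot h = g \cdot h$ that underlies that proposition. The content is the same, so no issues.
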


\begin{proof}
  The first claim is  clear as $\gamma : (G, \circ)  \to \Aut(G)$ is a
  morphism (Theorem~\ref{thm:gamma-for-regular}\eqref{item:gamma-is-hom}).

  Since $\ker(\gamma)$ is invariant under
  $\gamma(\ker(\gamma)) = \Set{1}$, Proposition~\ref{prop:2su3} implies
  $\ker(\gamma) \le G$.
\end{proof}

In the statement of the next result we write $[g, \alpha] =
g^{-1} g^{\alpha}$, for $g \in G$, $\alpha \in \Aut(G)$. This is indeed
an ordinary commutator in the abstract holomorph of $G$.
We write 
\begin{equation*}
[A, \gamma(A)]
=
\Set{[x, \gamma(y)] : x, y \in A}.
\end{equation*}
\begin{lemma}
  \label{Lemma:gamma_morfismi}
  Let $G$ be a finite group, $A \le G$, and $\gamma : A \to \Aut(G)$ a
  function such that $A$ is invariant under $\gamma(A)$.

  Then any two of the following conditions imply the third one.
  \begin{enumerate}
  \item $\gamma([A, \gamma(A)]) = \Set{1}$.
  \item $\gamma : A \to \Aut(G)$ is a morphism of groups.
  \item $\gamma$ satisfies the GFE.
  \end{enumerate}
\end{lemma}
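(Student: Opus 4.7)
The plan is to verify the three implications (1)+(2)$\Rightarrow$(3), (2)+(3)$\Rightarrow$(1), and (1)+(3)$\Rightarrow$(2). The basic algebraic tool will be the identity $g^{\gamma(h)} = g \cdot [g,\gamma(h)]$, together with the preliminary observation that $\gamma(1) = \id$ (which follows from (2) directly, or from (3) by substituting $g=h=1$ in the GFE and cancelling).

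The implications (1)+(2)$\Rightarrow$(3) and (2)+(3)$\Rightarrow$(1) are direct manipulations. For the first, I would apply the morphism property twice and kill the commutator with (1):
\[
  \gamma(g^{\gamma(h)} h)
  \;=\;
  \gamma(g^{\gamma(h)})\,\gamma(h)
  \;=\;
  \gamma(g)\,\gamma([g,\gamma(h)])\,\gamma(h)
  \;=\;
  \gamma(g)\,\gamma(h).
\]
For the second, the GFE and the morphism property give $\gamma(g)\gamma(h) = \gamma(g^{\gamma(h)} h) = \gamma(g^{\gamma(h)})\gamma(h)$, so that $\gamma(g^{\gamma(h)}) = \gamma(g)$; a further application of the morphism rewrites this as $\gamma([g,\gamma(h)]) = \gamma(g^{-1} g^{\gamma(h)}) = 1$.

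The substantive direction is (1)+(3)$\Rightarrow$(2). Under (3) alone, I would first show that the operation $a \circ b = a^{\gamma(b)} b$ makes $A$ into a group: associativity is the content of the GFE, $1$ is a two-sided identity since $\gamma(1)=\id$, and inverses exist by finiteness (noting that $\gamma(A)$-invariance of the finite set $A$ forces each $\gamma(a)$ to restrict to a bijection of $A$). Equivalently, the GFE is the assertion that $\gamma : (A,\circ) \to \Aut(G)$ is a group morphism. The key trick is that (1) causes the $\circ$-product with a commutator to collapse to the ordinary product: since $\gamma([g,\gamma(h)]) = \id$,
\[
  g \circ [g,\gamma(h)]
  \;=\;
  g^{\gamma([g,\gamma(h)])}\cdot[g,\gamma(h)]
  \;=\;
  g\cdot[g,\gamma(h)]
  \;=\;
  g^{\gamma(h)}.
\]
Applying $\gamma$ and using that it is a $\circ$-morphism yields $\gamma(g^{\gamma(h)}) = \gamma(g)\gamma([g,\gamma(h)]) = \gamma(g)$. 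Since $\gamma(h)$ restricts to a bijection of $A$, the same identity is valid with $\gamma(h)$ replaced by $\gamma(h)^{-1}$; combining this with the rewritten GFE $\gamma(gh) = \gamma(g^{\gamma(h)^{-1}})\gamma(h)$ from the Remark preceding Lemma~\ref{lemma:kergamma} gives $\gamma(gh) = \gamma(g)\gamma(h)$.

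The only non-routine step is this last bridge: the observation that (1) is precisely the condition that lets the $\circ$-product $g \circ [g,\gamma(h)]$ degenerate into the ordinary product $g^{\gamma(h)}$, so that the morphism property of $\gamma$ on $(A,\circ)$ transports back to $(A,\cdot)$. The other two implications are purely formal rearrangements of the same identity $g^{\gamma(h)} = g \cdot [g,\gamma(h)]$.
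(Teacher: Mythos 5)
Your proposal is correct and follows essentially the same route as the paper: all three implications rest on expanding $g^{\gamma(h)} = g\,[g,\gamma(h)]$ and using condition (1) to kill the commutator term, together with the rearranged GFE $\gamma(gh) = \gamma(g^{\gamma(h)^{-1}})\gamma(h)$. The only (harmless) cosmetic difference is in (1)+(3)$\Rightarrow$(2), where you phrase the key step through the $\circ$-operation and a bijectivity substitution to pass from $\gamma(h)$ to $\gamma(h)^{-1}$, while the paper applies the rearranged GFE twice, using that $\gamma(A)$ is a subgroup so that (1) applies to $[x,\gamma(y)^{-1}]$ directly.
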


\begin{proof}
  Suppose $\gamma([A, \gamma(A)]) = \Set{1}$. If $\gamma$ is a
  morphism,  then for $x, y \in A$ we have
  \begin{equation*}
    \gamma(x^{\gamma(y)} y)
    =
    \gamma(x [x, \gamma(y)] y)
    =
    \gamma(x) \gamma([x, \gamma(y)]) \gamma(y)
    =
    \gamma(x) \gamma(y),
  \end{equation*}
  that is, $\gamma$ satisfies the GFE.
  Conversely, suppose $\gamma$ satisfies the GFE, so that
  $\gamma(A)$ is a subgroup of $\Aut(G)$. Then for $x, y \in A$ we have
  \begin{align*}
    \gamma(x y)
    &=
    \gamma(x^{\gamma(y)^{-1}}) \gamma(y)
    \\&=
    \gamma(x [x, \gamma(y)^{-1}]) \gamma(y)
    \\&=
    \gamma(x^{\gamma([x, \gamma(y)^{-1}])^{-1}}) \gamma([x,
      \gamma(y)^{-1}]) \gamma(y)
    \\&=
    \gamma(x) \gamma(y).
  \end{align*}
  Suppose now $\gamma$ is a morphism and satisfies the GFE. Then for
  $x, y \in A$ we have 
  \begin{equation*}
    \gamma(x) \gamma(y)
    =
    \gamma(x^{\gamma(y)} y)
    =
    \gamma(x [x, \gamma(y)] y)
    =
    \gamma(x) \gamma([x, \gamma(y)]) \gamma(y),
  \end{equation*}
  so that $\gamma([x, \gamma(y)]) = 1$.
\end{proof}

\subsection{Lifting and restriction}
\label{subsec:sylow}

The next result can be considered as a vestigial form of the First
Isomorphism Theorem for gamma functions.

We write 
\begin{align*}
\iota \colon &G\to\Aut(G)\\
&g \mapsto (x\mapsto g^{-1} x g).
\end{align*}
\begin{prop}
  \label{prop:1.4}
  Let $G$ be a finite group and  let $A$, $B$ be subgroups of $G$ such
  that    $G = A B$.    

  If $\gamma$ is a GF on $G$, and $B \le \ker(\gamma)$, then
  \begin{equation}    
    \label{eq:a-to-b}
    \gamma(a b) = \gamma(a), \text{ for } a \in A, b \in B,
  \end{equation}
  so that $\gamma(G) = \gamma(A)$.
  
  Moreover, if $A$  is  $\gamma(A)$-invariant, then 
  \begin{equation}
    \label{eq:gamma'}
    \gamma'=\gamma_{\restriction A}\colon A\to\Aut(G)
  \end{equation}  
  is a RGF on $A$ and  $\ker(\gamma)$  is  invariant  under  the
  subgroup   
  \begin{equation*}
    \Set{ \gamma'(a) \iota(a) : a \in A }
  \end{equation*}
  of   $\Aut(G)$.

  Conversely, let $\gamma':A\to \Aut(G)$ be a RGF such that
  \begin{enumerate}
  \item
    \label{item:cap}
    $\gamma'(A \cap B) \equiv 1$,
  \item
    \label{item:g-i}
    $B$ is invariant under $\{ \gamma'(a) \iota(a) : a \in A \}$.
  \end{enumerate}
  Then the map 
  \begin{equation*}
    \gamma(ab) = \gamma'(a), \text{ for } a \in A, b \in B,
  \end{equation*}
  is a well defined GF on $G$, and $\ker(\gamma) = \ker(\gamma') B$.
\end{prop}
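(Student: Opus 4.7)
The plan is to handle the two directions separately. For the forward direction, the computation of $\gamma$ on products $ab$ is an immediate application of the GFE, and the invariance of $\ker(\gamma)$ reduces, via Lemmas~\ref{lemma:kergamma} and~\ref{lemma:inverse_and_conjugacy}, to the normality of $\ker(\gamma)$ in $(G, \circ)$. For the converse, the work lies in showing that the prescription $\gamma(ab) := \gamma'(a)$ is well-defined and that the resulting $\gamma$ satisfies the GFE; conditions~(1) and~(2) will be used exactly once each, for these two tasks respectively.

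For the forward direction, applying the GFE with $g = a \in A$ and $h = b \in B \le \ker(\gamma)$ gives
\[
\gamma(ab) = \gamma\bigl(a^{\gamma(b)} b\bigr) = \gamma(a)\gamma(b) = \gamma(a),
\]
which proves~\eqref{eq:a-to-b} and $\gamma(G) = \gamma(A)$. The restriction $\gamma' := \gamma_{\restriction A}$ inherits the GFE from $\gamma$, and by hypothesis $A$ is $\gamma'(A)$-invariant, so $\gamma'$ is an RGF on $A$. For the invariance of $\ker(\gamma)$ under $\{\gamma'(a)\iota(a) : a \in A\}$, take $b \in \ker(\gamma)$ and $a \in A$. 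Since $\gamma(b) = 1$, Lemma~\ref{lemma:inverse_and_conjugacy} collapses to
\[
a^{\ominus 1} \circ b \circ a = a^{-1} b^{\gamma(a)} a = b^{\gamma'(a)\iota(a)},
\]
and Lemma~\ref{lemma:kergamma} places the left-hand side in $\ker(\gamma)$.

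For the converse, well-definedness follows from condition~(1): if $a_1 b_1 = a_2 b_2$ with $a_i \in A$ and $b_i \in B$, then $c := a_2^{-1} a_1 = b_2 b_1^{-1} \in A \cap B$, so $\gamma'(c) = 1$, and \eqref{eq:gamma-for-dot} applied to $(k,h) = (a_2, c)$ yields
\[
\gamma'(a_1) = \gamma'(a_2 c) = \gamma'\bigl(a_2^{\gamma'(c)^{-1}}\bigr)\gamma'(c) = \gamma'(a_2).
\]
The GFE for $\gamma$ is the key computation: since $\gamma(a'b') = \gamma'(a') \in \Aut(G)$ acts as an automorphism on products,
\[
(ab)^{\gamma(a'b')}(a'b') = a^{\gamma'(a')} b^{\gamma'(a')} a' b' = \bigl(a^{\gamma'(a')} a'\bigr)\bigl(b^{\gamma'(a')\iota(a')} b'\bigr).
\]
The first factor lies in $A$ because $A$ is $\gamma'(A)$-invariant, and the second lies in $B$ by condition~(2) (and $b' \in B$). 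Hence
\[
\gamma\bigl((ab)^{\gamma(a'b')}(a'b')\bigr) = \gamma'\bigl(a^{\gamma'(a')} a'\bigr) = \gamma'(a)\gamma'(a') = \gamma(ab)\gamma(a'b'),
\]
by the GFE for $\gamma'$. The kernel identity $\ker(\gamma) = \ker(\gamma') B$ is then immediate, since $\gamma(ab) = \gamma'(a) = 1$ exactly when $a \in \ker(\gamma')$.

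The main obstacle is conceptual rather than technical: one must recognise that the two hypotheses on $\gamma'$ are precisely the ones needed, with condition~(1) controlling the ambiguity inherent in the $AB$-factorisation of an element, and condition~(2) ensuring that $(ab)^{\gamma(a'b')}(a'b')$, which a priori mixes $A$- and $B$-components via conjugation by $a'$, still factors cleanly as an element of $A$ times an element of $B$, so that the GFE for $\gamma'$ transfers to $\gamma$.
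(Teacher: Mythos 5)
Your proof is correct and follows essentially the same route as the paper's: the GFE/coset argument for \eqref{eq:a-to-b}, the conjugation formula $a^{\ominus 1}\circ k\circ a = k^{\gamma(a)\iota(a)}$ together with normality of $\ker(\gamma)$ in $(G,\circ)$ for the invariance claim, and for the converse the same well-definedness check from condition~(1) followed by the same factorisation $(ab)^{\gamma(a'b')}a'b' = \bigl(a^{\gamma'(a')}a'\bigr)\bigl(b^{\gamma'(a')\iota(a')}b'\bigr)$ using condition~(2). The one item of the statement you leave unverified is that $\Set{\gamma'(a)\iota(a) : a\in A}$ is actually a \emph{subgroup} of $\Aut(G)$ (the paper checks this by computing $\gamma(a_{1})\iota(a_{1})\gamma(a_{2})\iota(a_{2}) = \gamma(a_{1}^{\gamma(a_{2})}a_{2})\iota(a_{1}^{\gamma(a_{2})}a_{2})$ with $a_{1}^{\gamma(a_{2})}a_{2}\in A$); since $G$ is finite, invariance under the set already gives invariance under the subgroup it generates, so this omission is harmless but should be filled in.
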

In this  situation we will  say that  $\gamma$ is a  \emph{lifting} of
$\gamma'$.

\begin{proof}
  Clearly $\gamma$ is constant on  the cosets of $\ker(\gamma)$, and thus also
  on  the  cosets  of  $B$, so that~\eqref{eq:a-to-b}  holds,  and thus
  $\gamma(G) = \gamma(A)$. Assume  now that $A$  is $\gamma(A)$-invariant;
  by Proposition~\ref{prop:2su3}, $A$ is a subgroup of $(G,\circ)$ and 
  $\gamma'$ as in (\ref{eq:gamma'}) satisfies the GFE, so that
  $\gamma'$ is a RGF. 

  For $a \in A$ and $k \in \ker(\gamma)$ we have
  \begin{equation*}
    a^{\ominus 1} \circ k \circ a 
    = a^{-\gamma(a)^{-1} \gamma(k) \gamma(a)} k^{\gamma(a)} a
    = a^{-1} k^{\gamma(a)} a
    = k^{\gamma(a) \iota(a)},
  \end{equation*}
  and  since  $a^{\ominus  1}\circ k\circ  a\in\ker(\gamma)$,  we  get
  $k^{\gamma(a) \iota(a)}\in \ker(\gamma)$,  namely, $\ker(\gamma)$ is
  invariant under the action of $\Set{\gamma'(a) \iota(a) : a\in A}$.

  Note that the latter is a subgroup of $\Aut(G)$, as for $a_{1},
  a_{2} \in A$ we have
  \begin{align*}
    \gamma(a_{1}) \iota(a_{1}) \gamma(a_{2}) \iota(a_{2})
    &=
    \gamma(a_{1})  \gamma(a_{2}) \iota(a_{1}^{\gamma(a_{2})}) \iota(a_{2})
    \\&=
    \gamma(a_{1}^{\gamma(a_{2})} a_{2}) \iota(a_{1}^{\gamma(a_{2})} a_{2}),
  \end{align*}
  with $a_{1}^{\gamma(a_{2})} a_{2} \in A$, as $A$ is $\gamma(A)$-invariant.

  Conversely,   let  $\gamma'   :  A\to   \Aut(G)$  be   a  RGF   such
  that~\eqref{item:cap}~and   \eqref{item:g-i}    hold,   and   define
  $\gamma(ab)=\gamma'(a)$  for  each  $a\in  A$,  $b\in  B$.  The  map
  $\gamma$ is well-defined; in fact for  $i=1,2$, let $a_{i} \in A$ and
  $b_{i} \in B$ be such that  $a_{1} b_{1} = a_{2} b_{2}$; then $a_{1}
  = a_{2}  b_{2} b_{1}^{-1}$, so $b  = b_{2} b_{1}^{-1} \in  A \cap B$
  and
  \begin{equation*}
    \gamma'(a_{1}) 
    = 
    \gamma'( a_{2} b ) 
    = 
    \gamma'(a_{2}^{\gamma'(b)^{-1}}) \gamma'(b) 
    = 
    \gamma'(a_{2}).
  \end{equation*}
  Moreover
  \begin{align*}
    \gamma(a_{1} b_{1}) \gamma(a_{2} b_{2})
    =
    \gamma'(a_{1}) \gamma'(a_{2})
    =
    \gamma'(a_{1}^{\gamma(a_{2})} a_{2})
  \end{align*}
  and 
  \begin{align*}
    \gamma((a_{1} b_{1})^{\gamma(a_{2} b_{2})} a_{2} b_{2})
    &=
    \gamma(a_{1}^{\gamma(a_{2})} b_{1}^{\gamma(a_{2})} a_{2} b_{2})
    \\&=
    \gamma(a_{1}^{\gamma(a_{2})} a_{2} 
    b_{1}^{\gamma(a_{2}) \iota(a_{2})} b_{2})
    \\&=
    \gamma'(a_{1}^{\gamma(a_{2})} a_{2}),
  \end{align*}
  where the last  equality holds because of $(2)$; thus  $\gamma$ is a
  GF  on  $G$. Finally,  $\gamma(ab)=\gamma'(a) = 1$  if  and only  if
  $a \in \ker(\gamma')$, so $\ker(\gamma) = \ker(\gamma')B$.
\end{proof}

\begin{corollary}
  \label{cor:mor}
  In the notation of Proposition~\ref{prop:1.4} , let $\gamma$ be the
  lifting of $\gamma'$ to $G$. Then $\gamma$ is a morphism if and only
  if $\gamma'$ is  a morphism and $\ker(\gamma)$ is  a normal subgroup
  of $G$.
\end{corollary}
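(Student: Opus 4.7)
The plan is to dispose of the forward implication by inspection, and to handle the converse with Lemma~\ref{Lemma:gamma_morfismi} as the central tool.

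For the forward direction, suppose $\gamma$ is a group morphism $G \to \Aut(G)$. Then $\gamma' = \gamma_{\restriction A}$ is a morphism as the restriction of a homomorphism to a subgroup, and $\ker(\gamma)$ is the kernel of a group homomorphism defined on $G$, hence normal in $G$. Both conclusions are immediate.

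For the converse, assume $\gamma'$ is a morphism and $\ker(\gamma) \norm G$. Since $\gamma$ satisfies the GFE, Lemma~\ref{Lemma:gamma_morfismi} (with the $A$ of the lemma equal to $G$) reduces the proof that $\gamma$ is a morphism to checking
\begin{equation*}
  [g, \gamma(h)] \in \ker(\gamma) \quad \text{for all } g, h \in G.
\end{equation*}
By Proposition~\ref{prop:1.4} one has $\gamma(G) = \gamma'(A)$, so it suffices to verify $[g, \gamma'(a)] \in \ker(\gamma)$ for $g \in G$ and $a \in A$. Writing $g = a' b$ with $a' \in A$ and $b \in B$, the commutator identity yields
\begin{equation*}
  [a'b, \gamma'(a)] = [a', \gamma'(a)]^{b} \cdot [b, \gamma'(a)].
\end{equation*}
For the first factor, Lemma~\ref{Lemma:gamma_morfismi} applied to the RGF $\gamma'$ on $A$ (where both the GFE and the morphism property hold) gives $[a', \gamma'(a)] \in \ker(\gamma') \subseteq \ker(\gamma)$, and normality of $\ker(\gamma)$ in $G$ preserves this after conjugation by $b$. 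For the second factor, write $[b, \gamma'(a)] = b^{-1} \cdot b^{\gamma'(a)}$; condition~(2) of Proposition~\ref{prop:1.4} asserts that $B$ is invariant under $\gamma'(a)\iota(a)$, so $a^{-1} b^{\gamma'(a)} a \in B$, whence $b^{\gamma'(a)} \in a B a^{-1} \subseteq \ker(\gamma)$ thanks to $B \le \ker(\gamma) \norm G$, while $b^{-1} \in B \subseteq \ker(\gamma)$.

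The only delicate step is the treatment of the mixed commutator $[b, \gamma'(a)]$, with $b \in B$ possibly outside $A$: this is precisely what condition~(2) of Proposition~\ref{prop:1.4} was designed to control, and it dovetails with the hypothesis $\ker(\gamma) \norm G$ to push $b^{\gamma'(a)}$ back into $\ker(\gamma)$. Everything else is the straightforward translation of \emph{morphism} into \emph{trivial commutators} afforded by Lemma~\ref{Lemma:gamma_morfismi}.
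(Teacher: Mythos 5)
Your proof is correct, but it takes a different route from the paper's. For the converse, the paper simply verifies multiplicativity directly: writing two arbitrary elements as $a_1b_1$ and $a_2b_2$ with $a_i\in A$, $b_i\in B$, it computes $\gamma(a_1b_1a_2b_2)=\gamma(a_1a_2\,b_1^{a_2}b_2)=\gamma'(a_1a_2)=\gamma'(a_1)\gamma'(a_2)=\gamma(a_1b_1)\gamma(a_2b_2)$, using only that $B\le\ker(\gamma)$, that $\ker(\gamma)$ is normal (so $b_1^{a_2}b_2\in\ker(\gamma)$ and can be absorbed, $\gamma$ being constant on cosets of its kernel), and that $\gamma'$ is a morphism; Lemma~\ref{Lemma:gamma_morfismi} is never invoked. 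You instead reduce the morphism property to the commutator condition $\gamma([G,\gamma(G)])=\Set{1}$ via Lemma~\ref{Lemma:gamma_morfismi} (applied once to $\gamma$ on $G$ and once to $\gamma'$ on $A$), and then check $[a'b,\gamma'(a)]\in\ker(\gamma)$ by the commutator identity, condition~(2) of Proposition~\ref{prop:1.4}, and normality of the kernel. Each step of yours is sound (including the point that $[a',\gamma'(a)]\in A$ because $A$ is $\gamma'(A)$-invariant, so $\gamma'$ may be applied to it, and $\ker(\gamma')\subseteq\ker(\gamma)$). The trade-off: the paper's argument is shorter and more elementary, needing neither Lemma~\ref{Lemma:gamma_morfismi} nor condition~(2) (normality of $\ker(\gamma)\supseteq B$ already suffices), whereas your argument shows how the ``any two of the three'' lemma can systematically replace the direct computation, at the cost of a slightly longer bookkeeping of where each commutator factor lands.
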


\begin{proof}
  Clearly,  if $\gamma$  is  a  morphism then  so  is its  restriction
  $\gamma'$, and $\ker(\gamma)$ is a normal subgroup of $G$.

  Conversely,  if $\gamma'$  is  a morphism  and  $\ker(\gamma)$ is  a
  normal subgroup of $G$ then for $a_i\in A$ and $b_i\in B$, $i=1,2$, we have
  \begin{equation*}
    \gamma(a_1b_1a_2b_2)
    =
    \gamma(a_1a_2b_1^{a_2}b_2)
    =
    \gamma'(a_1a_2)
    =
    \gamma'(a_1)\gamma'(a_2)
    =
    \gamma(a_1b_1)\gamma(a_2b_2).
  \end{equation*}
\end{proof}

We now aim at establishing a criterion
(Proposition~\ref{prop:images_gamma}) that allows us to define a map
$\gamma' : A \to \Aut(G)$  which verifies the  GFE, in the  case when
$A$ is a cyclic $p$-group.

First, we state separately two elementary arithmetic lemmas which will
be useful in the following. The first one is well-known.
\begin{lemma}
  \label{lemma:arithmetic}
  Let $p > 2$  be a prime and let $n>m\ge0$ be  integers. The solutions of
  the congruence
  \begin{equation}
    \label{cong}
    x^{p^m}\equiv1\pmod {p^n}
  \end{equation}
  are the integers of type $x=1+hp^{n-m}$.
\end{lemma}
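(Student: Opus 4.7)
The plan is to prove a two-way inclusion: first show that every integer of the form $x = 1 + hp^{n-m}$ solves the congruence, then show these exhaust the solution set by a counting argument.

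For the forward direction, I would establish the standard lifting lemma: if $p$ is odd and $x \equiv 1 \pmod{p^k}$ with $k \ge 1$, then $x^p \equiv 1 \pmod{p^{k+1}}$. This comes from writing $x = 1 + y p^k$ and expanding
\begin{equation*}
  (1 + y p^{k})^{p} = 1 + p \cdot y p^{k} + \sum_{i=2}^{p} \binom{p}{i} (y p^{k})^{i}.
\end{equation*}
The linear term is $y p^{k+1}$; each higher term is divisible by $p^{2k}$ (and by $p$ from the binomial coefficient when $i < p$), and $2k \ge k+1$ since $k \ge 1$. Starting from $x = 1 + h p^{n-m} \equiv 1 \pmod{p^{n-m}}$ and iterating the lifting $m$ times then gives $x^{p^{m}} \equiv 1 \pmod{p^{n}}$.

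For the reverse direction, I would count. The distinct residues mod $p^{n}$ of the form $1 + h p^{n-m}$ are exactly $p^{m}$ in number (take $0 \le h < p^{m}$). On the other hand, since $p$ is odd, $(\Z/p^{n}\Z)^{*}$ is cyclic of order $p^{n-1}(p-1)$, so the subgroup of elements killed by $p^{m}$ has order $\gcd(p^{m}, p^{n-1}(p-1)) = p^{m}$ (using $m \le n-1$). Since the $p^{m}$ elements $1 + h p^{n-m}$ lie in this subgroup and the subgroup has size $p^{m}$, the two sets coincide.

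There is essentially no obstacle: the only subtlety is the hypothesis $p > 2$, which is used both in the binomial estimate (the $p = 2$, $k = 1$ case fails: $3^{2} = 9 \not\equiv 1 \pmod 8$) and in the cyclicity of $(\Z/p^{n}\Z)^{*}$. Both ingredients are entirely standard, which is why the statement is invoked as well-known.
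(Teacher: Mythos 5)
Your proof is correct, and since the paper states this lemma without proof (explicitly invoking it as well-known), there is nothing in the text to compare against; your two-way argument — iterated lifting for one inclusion, a counting argument in the cyclic group $(\Z/p^{n}\Z)^{*}$ for the other — is exactly the standard argument being implicitly invoked, and both steps check out (in the reverse direction you correctly use $m \le n-1$, which follows from $n > m$).

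One remark in your closing commentary is wrong, though it does not affect the proof. You claim the binomial estimate needs $p>2$ and offer the counterexample $3^{2} = 9 \not\equiv 1 \pmod 8$; in fact $9 \equiv 1 \pmod 8$, and the lifting step $x \equiv 1 \pmod{p^{k}} \Rightarrow x^{p} \equiv 1 \pmod{p^{k+1}}$ holds for $p = 2$ as well, since each term with $i \ge 2$ in your expansion is already divisible by $p^{2k}$ and $2k \ge k+1$ (your parenthetical about the factor of $p$ in $\binom{p}{i}$ is not needed). The hypothesis $p>2$ is genuinely used only in the reverse direction: for $p = 2$ and $n \ge 3$ the unit group is not cyclic, so there are extra solutions (e.g.\ $x \equiv -1$ and $x \equiv 2^{n-1} \pm 1$ all satisfy $x^{2} \equiv 1 \pmod{2^{n}}$), which is precisely how the lemma fails at $p = 2$ — the forward inclusion still holds, but it is no longer exhaustive.
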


\begin{lemma}
  \label{lemma:arithmetic2}
  Let $p > 2$ be a prime and 
  let $s\in\Z$, $s\equiv1\pmod{p}$. Define $\s(0)=0$ and for each $k>0$
  \begin{equation*}
    \s(k) = \sum_{i=0}^{k-1}s^i.
  \end{equation*}
  Then,  for each  $n\in \N$,  the set
  $\Set{\s(0),\dots,\s(p^n-1)}$  is  a  set of  representatives  of  the
  classes modulo $p^n$.\\
\end{lemma}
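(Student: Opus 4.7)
My approach is to show that the map $k \mapsto \s(k) \bmod p^n$ is injective on $\{0, 1, \dots, p^n - 1\}$; since this is a map from a set of $p^n$ elements to $\Z/p^n\Z$, injectivity immediately yields that the image is a complete set of residues, which is exactly what is claimed.

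The case $s = 1$ is trivial, since $\s(k) = k$. So I would assume $s \ne 1$ and, using $s \equiv 1 \pmod p$, write $s - 1 = p^a u$ with $a \ge 1$ and $\gcd(u,p) = 1$, so that $\s(k) = (s^k - 1)/(s - 1)$. Suppose $\s(k) \equiv \s(\ell) \pmod{p^n}$ with $0 \le \ell < k < p^n$. Using the identity
\begin{equation*}
  \s(k) = \s(\ell) + s^{\ell}\, \s(k - \ell),
\end{equation*}
and the observation that $s^{\ell}$ is a unit modulo $p^n$ (since $s \equiv 1 \pmod p$), the hypothesis reduces to $\s(m) \equiv 0 \pmod{p^n}$ with $m = k - \ell$ satisfying $0 < m < p^n$. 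Multiplying through by $s - 1 = p^a u$ (which is coprime to $u$), this is equivalent to
\begin{equation*}
  s^m \equiv 1 \pmod{p^{n+a}}.
\end{equation*}

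The key step is then to compute the multiplicative order of $s$ modulo $p^{n+a}$ using Lemma~\ref{lemma:arithmetic}. The lemma says that $s^{p^j} \equiv 1 \pmod{p^{n+a}}$ (for $j < n + a$) holds iff $s \equiv 1 \pmod{p^{n+a-j}}$, i.e.\ iff $p^{n+a-j}$ divides $s - 1 = p^a u$; since $\gcd(u,p) = 1$, this is equivalent to $n + a - j \le a$, i.e.\ $j \ge n$. Hence the order of $s$ in $(\Z/p^{n+a}\Z)^{\times}$ is exactly $p^n$, forcing $p^n \mid m$, which contradicts $0 < m < p^n$ and proves injectivity.

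The only mild obstacle is to apply Lemma~\ref{lemma:arithmetic} at the right modulus $p^{n+a}$ rather than $p^n$, which is essential because clearing the denominator $s - 1 = p^a u$ inflates the required $p$-adic valuation by $a$; once this shift is in place, the rest is bookkeeping.
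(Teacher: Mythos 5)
Your proof is correct and follows essentially the same route as the paper: both arguments use the identity $\s(k)=\s(\ell)+s^{\ell}\s(k-\ell)$, clear the factor $s-1=p^{a}u$ to shift the congruence to the modulus $p^{n+a}$, and then invoke Lemma~\ref{lemma:arithmetic} to conclude that $s^{m}\equiv 1 \pmod{p^{n+a}}$ forces $p^{n}\mid m$. The only differences are cosmetic: you phrase the conclusion as injectivity plus a counting argument (and treat $s=1$ separately), whereas the paper runs the same computation as a chain of equivalences.
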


\begin{proof}
  We will show that for $k,h\in\Z$
  \begin{equation*}
    \s(k) \equiv \s(h) \pmod{p^n}
    \iff
    k \equiv h \pmod{p^n}
  \end{equation*}
  and this  implies the lemma.
  Let 
  \begin{equation}
    \label{eq:s}
    s=1+p^{l} h\quad \text{with  $\gcd(h,p)=1$};
  \end{equation}
  by  hypothesis $l>0$.  If  $k>h\ge0$, taking  into account  equality
  \eqref{eq:s},  we   have
  \begin{align*}  
    \s(k)\equiv  \s(h)\pmod{p^n}
    &\iff        
    s^h\sum_{i=0}^{k-h-1}s^i\equiv0\pmod{p^n}\\       
    &\iff
    \sum_{i=0}^{k-h-1}s^i\equiv0\pmod{p^n}\\                       
    &\iff
    (s-1)\sum_{i=0}^{k-h-1}s^i   \equiv   0  \pmod{p^{n+l}}\\   
    &\iff
    s^{k-h}\equiv 1 \pmod{p^{n+l}}.
  \end{align*}
  By Lemma~\ref{lemma:arithmetic}, $s=1+p^{l} h$ ($\gcd(h,p)=1$) is a
  solution of the last equation if and only if $k-h\equiv 0\pmod
  {p^n}.$ 
\end{proof}

\begin{corollary}
  \label{cor:same_generator}
  Let $G$ be a finite group, and let $A = \Span{a}$ be a cyclic
  subgroup  of $G$ of order $p^{n}$, where $p$ is an odd prime. Let
  \begin{equation*}
    \gamma : A \to \Aut(G)
  \end{equation*}
  be a RGF and let $a^{\gamma(a)}=a^s$. 
  
  Then,   for each $k$,
  \begin{equation}
    \label{eq:same_order}
    a^{\circ k}=a^{\s(k)},
  \end{equation}
  so $\ord_{(A,\circ)}(a)=\ord_A(a)$ and $(A, \circ)$ is  generated by
  $a$. 
\end{corollary}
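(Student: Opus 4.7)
My plan is to establish the formula $a^{\circ k} = a^{\s(k)}$ by a direct induction on $k \geq 0$, and then to extract the order and generator claims from Lemma~\ref{lemma:arithmetic2}, whose hypothesis $s \equiv 1 \pmod p$ must be verified separately. The induction itself is straightforward: the base $k = 0$ reduces to $1 = a^{\s(0)}$, and for the step I would compute
\begin{equation*}
a^{\circ(k+1)} = a^{\circ k} \circ a = (a^{\circ k})^{\gamma(a)} \cdot a = (a^{\s(k)})^{\gamma(a)} \cdot a = a^{s \cdot \s(k) + 1},
\end{equation*}
using that $\gamma(a)$ sends each power of $a$ to its $s$-th power, and then observe that the definition of $\s$ gives the recurrence $\s(k+1) = 1 + s \cdot \s(k)$ which closes the induction.

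The main step is verifying $s \equiv 1 \pmod p$. Since $A$ is $\gamma(A)$-invariant, $(A, \circ)$ is a group of order $p^n$, and the same derivation as in Theorem~\ref{thm:gamma-for-regular}\eqref{item:gamma-is-hom} shows that $\gamma : (A, \circ) \to \Aut(G)$ is a homomorphism. Hence the order of $\gamma(a)$ in $\Aut(G)$ divides $p^n$. Restricting $\gamma(a)$ to $A$, the order of this restriction---which equals the multiplicative order of $s$ modulo $p^n$---is simultaneously a $p$-power and a divisor of $\Size{\Aut(A)} = p^{n-1}(p-1)$, so it divides $p^{n-1}$. Therefore $s^{p^{n-1}} \equiv 1 \pmod{p^n}$, and Lemma~\ref{lemma:arithmetic} forces $s \equiv 1 \pmod p$.

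With the hypothesis $s \equiv 1 \pmod p$ in hand, Lemma~\ref{lemma:arithmetic2} says that $\s(0), \dots, \s(p^n-1)$ form a complete set of representatives modulo $p^n$. Combined with $a^{\circ k} = a^{\s(k)}$ and $\ord_A(a) = p^n$, this immediately gives $\ord_{(A, \circ)}(a) = p^n$, and since $(A, \circ)$ has exactly $p^n$ elements, $a$ must generate it. The only non-mechanical step is the verification that $s \equiv 1 \pmod p$; the induction is one line and the final conclusion follows directly from the arithmetic lemma.
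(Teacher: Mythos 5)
Your proof is correct and follows essentially the same route as the paper's: an induction using the recurrence $\s(k+1)=1+s\,\s(k)$ gives $a^{\circ k}=a^{\s(k)}$, and Lemma~\ref{lemma:arithmetic2} then yields the statements about the order of $a$ and generation of $(A,\circ)$. The only difference is that you explicitly verify the hypothesis $s\equiv 1\pmod{p}$ of that lemma (via $\gamma$ being a morphism on the $p$-group $(A,\circ)$, restriction to $\Aut(A)$, and Lemma~\ref{lemma:arithmetic}), a step the paper's one-line proof leaves implicit -- a useful addition rather than a divergence.
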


\begin{proof}
  The  equality  in  \eqref{eq:same_order}  can  be  easily  shown  by
  induction;    the     corollary    follows    then     from    Lemma
  \ref{lemma:arithmetic2} since $a^{\circ k}\in A$ for each $k$.
\end{proof}
    
\begin{proposition}
  \label{prop:images_gamma}
  Let $G$ be a finite group, and let $A = \Span{a}$ be a cyclic
  subgroup  of $G$ of order $p^{n}$, where $p$ is an odd prime. 

  Let $\eta \in \Aut(G)$.
  
  The following are equivalent.
  \begin{enumerate}
  \item\label{item:theresagammaprime} 
    There is a RGF
    \begin{equation*}
      \gamma: A \to \Aut(G)
    \end{equation*}
    such that $\gamma(a) = \eta$.
  \item\label{item:invariant-and-power}
    \begin{enumerate}
    \item 
      $A$ is $\eta$-invariant, and
    \item 
      $\ord(\eta) \mid p^{n}$.
    \end{enumerate}
  \end{enumerate}
  When these  conditions hold, $\gamma$ is uniquely defined.
\end{proposition}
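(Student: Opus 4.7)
The plan is to handle the two implications separately and then uniqueness.

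For $(\ref{item:theresagammaprime}) \Rightarrow (\ref{item:invariant-and-power})$, the $\eta$-invariance of $A$ reads off the definition of an RGF, since $\eta = \gamma(a) \in \gamma(A)$. For the order bound, the GFE makes $\gamma \colon (A,\circ) \to \Aut(G)$ into a group morphism (Theorem~\ref{thm:gamma-for-regular}\eqref{item:gamma-is-hom}, applied with $\gamma$ viewed as a gamma function on $A$), and by Corollary~\ref{cor:same_generator} the group $(A,\circ)$ is cyclic of order $p^{n}$ generated by $a$; hence $\ord(\eta) = \ord(\gamma(a))$ divides $p^{n}$.

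For the converse, since $A$ is cyclic and $\eta$-invariant, I would write $a^{\eta} = a^{s}$ with $s$ an integer determined modulo $p^{n}$. The first step is to show that $s \equiv 1 \pmod{p}$: from $\ord(\eta) \mid p^{n}$ one has $s^{p^{n}} \equiv 1 \pmod{p^{n}}$, so $s$ has $p$-power order in $(\Z/p^{n}\Z)^{\ast}$, and for odd $p$ the $p$-Sylow subgroup of this group consists exactly of the residues $\equiv 1 \pmod{p}$. With this in hand, Lemma~\ref{lemma:arithmetic2} ensures that every element of $A$ can be written uniquely as $a^{\s(k)}$ for some $k \in \{0,1,\dots,p^{n}-1\}$, so I would define
\begin{equation*}
  \gamma(a^{\s(k)}) = \eta^{k},
\end{equation*}
which is well-defined thanks to $\ord(\eta) \mid p^{n}$. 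Checking the GFE then boils down, with $g = a^{\s(k)}$, $h = a^{\s(l)}$ and $a^{\eta^{l}} = a^{s^{l}}$, to the elementary identity $\s(k+l) = \s(l) + s^{l}\s(k)$, which yields $g^{\gamma(h)} h = a^{\s(k+l)}$ and hence $\gamma(g^{\gamma(h)} h) = \eta^{k+l} = \gamma(g)\gamma(h)$. The $\gamma(A)$-invariance of $A$ is immediate since every $\eta^{k}$ preserves $A$.

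For uniqueness, any RGF $\gamma''$ on $A$ with $\gamma''(a) = \eta$ must satisfy $\gamma''(a^{\circ k}) = \eta^{k}$ by induction via the GFE, and Corollary~\ref{cor:same_generator} guarantees that $\{a^{\circ k} : k \ge 0\}$ exhausts $A$, forcing $\gamma'' = \gamma$. The main obstacle in the proof is precisely the implication $\ord(\eta) \mid p^{n} \Rightarrow s \equiv 1 \pmod{p}$, which is what enables Lemmas~\ref{lemma:arithmetic} and~\ref{lemma:arithmetic2} and pins the construction to odd primes; everything else is bookkeeping with the additive-style formula for $\s$.
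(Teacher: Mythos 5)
Your proof is correct and follows essentially the same route as the paper: the forward direction via the morphism $\gamma:(A,\circ)\to\Aut(G)$ and Corollary~\ref{cor:same_generator}, the converse by first deducing $s\equiv 1\pmod p$ (you via the Sylow $p$-subgroup of $(\Z/p^n\Z)^*$, the paper via $\ord(s)\mid\gcd(p^n,\phi(p^n))$ and Lemma~\ref{lemma:arithmetic}), then defining $\gamma(a^{\s(k)})=\eta^k$ using Lemma~\ref{lemma:arithmetic2} and checking the GFE with the identity $\s(k+l)=\s(l)+s^l\s(k)$, and the same uniqueness argument. These are only cosmetic variations on the paper's proof.
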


\begin{proof}
  Assume first that the map
  $\gamma\colon   A\to\Aut(G)$ is a RGF,   and   let
$\gamma(a)=\eta$.  
Then,  $\gamma\colon (A, \circ)\to\Aut(G)$
  is a  morphism, so  
\begin{equation*}
\ord(\eta)~\mid \ord_{(A,\circ)}(a)=\ord_A(a)=p^n
\end{equation*}
where the first equality follows from Corollary~\ref{cor:same_generator}.   \smallskip

  As to  the converse,  assume~\eqref{item:invariant-and-power} holds.
  Then $\eta_{\restriction A} \in  \Aut(A) \cong (\Z/p^n\Z)^*$ and, if $\eta(a)=a^s$, then $s\in(\Z/p^n\Z)^*$ 
  and 
    \begin{equation*}
    \ord(s)=\ord(\eta_{\restriction A})    
    \mid     
    \gcd(p^n ,\phi(p^{n}))
    =
    p^{n-1},
  \end{equation*}
  so that,  
  by  Lemma~\ref{lemma:arithmetic},
  \begin{equation}
    \label{eq:ords}s\equiv 1\pmod p. 
  \end{equation} 

  In the notation of Lemma~\ref{lemma:arithmetic2} we have that $\Set{
    \s(0),  \dots,\s(p^n -1)  }$ is  a set  of representatives  of the
  classes modulo $p^n$, hence $A = \Set{ a^{\s(k)} }_{k=0}^{p^n-1}$.
  Therefore we can define $\gamma$ on $A$ letting, for all $k$,
  \begin{equation*}
    \gamma (a^{\s( k)})=\eta^k,
  \end{equation*}
  and we have only to check that it satisfies the GFE.
  Now $a_1=a^{\s( k_1)}$, $a_2=a^{\s( k_2)}$, for some $k_1$, $k_2$,
  so that
  \begin{align*}
    \gamma (a_1^{\gamma (a_2)}a_2)
    &=\gamma ({(a^{\s( k_1)})}^{\gamma (a^{\s( k_2)})}a^{\s( k_2)})\\
    &=\gamma ({(a^{\s( k_1)})}^{\eta^{k_2}}a^{\s( k_2)})\\
    &=\gamma (a^{s^{k_2}\sum_{i=0}^{k_1-1}s^i}a^{\sum_{i=0}^{k_2-1}s^i})\\
    &=\gamma (a^{\sum_{i=0}^{k_1+k_2-1}s^i})\\
    &=\gamma (a^{\s (k_1+k_2)})\\
    &=\eta^{k_1+k_2}\\
    &=\gamma (a^{\s(k_1)})\gamma (a^{\s(k_2)})\\
    &=\gamma ({a_1})\gamma ({a_2}).
  \end{align*}
  Finally,  if  $\gamma '\colon  A\to\Aut(G)$  is a RGF such that
  $\gamma'(a)=\eta$,  denoting by  $\circ'$ the  operation $a_1\circ'
  a_2=a_1^{\gamma'(a_2)}a_2$,          we          have          that
  $\gamma'(a^{\circ'k})=\eta^k$, for each $k$. On the other hand,
  \begin{equation*}
    a^{\circ'k}=a^{\sum_{i=0}^{k-1}\eta^i}=a^{\s( k)},
  \end{equation*}
  so that
  \begin{equation*}
    \gamma'(a^{\s(k)})=\gamma'(a^{\circ' k})=\eta^k=\gamma(a^{\s(k)}),
  \end{equation*}
  that is, $\gamma'=\gamma$.
\end{proof}

\begin{corollary}
  \label{cor:morA}
  Let $G$ be a finite group, and let $A = \Span{a}$ be a cyclic
  subgroup of $G$ of order $p^n$ where $p$ is an odd prime. 
  
  Let $\gamma\colon A\to\Aut(G)$ be a RGF. 
  
  Then the following are equivalent:
  \begin{enumerate}
  \item 
    $\gamma$ is a morphism, and
  \item 
    $a^{\gamma(a)}=a^s$, with $s \equiv 1 
  \pmod{\ord(\gamma(a))}$.
  \end{enumerate}
\end{corollary}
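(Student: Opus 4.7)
The plan is to leverage the explicit formula provided by Proposition~\ref{prop:images_gamma}: the RGF $\gamma$ is completely determined by $\eta := \gamma(a)$ via $\gamma(a^{\s(k)}) = \eta^{k}$ for all $k$, where $s$ is defined by $a^{\eta} = a^{s}$ and $\s(k) = \sum_{i=0}^{k-1} s^{i}$ as in Lemma~\ref{lemma:arithmetic2}. Since $A = \Span{a}$ is cyclic, $\gamma \colon A \to \Aut(G)$ is a group morphism if and only if $\gamma(a^{m}) = \eta^{m}$ for every integer $m$; thus the task reduces to comparing this desired equality with the defining formula $\gamma(a^{\s(k)}) = \eta^{k}$.

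For the implication (1)$\Rightarrow$(2), I would substitute $m = \s(k)$ into $\gamma(a^{m}) = \eta^{m}$, which together with $\gamma(a^{\s(k)}) = \eta^{k}$ forces $\eta^{\s(k) - k} = \id$, that is, $\s(k) \equiv k \pmod{\ord(\eta)}$ for every $k$. Specialising to $k = 2$ (so that $\s(2) = 1 + s$) gives at once $s \equiv 1 \pmod{\ord(\gamma(a))}$.

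For the converse, the hypothesis $s \equiv 1 \pmod{\ord(\eta)}$ propagates to $s^{i} \equiv 1 \pmod{\ord(\eta)}$ for every $i$, hence $\s(k) \equiv k \pmod{\ord(\eta)}$ for every $k$, and therefore $\gamma(a^{\s(k)}) = \eta^{k} = \eta^{\s(k)}$. Since Lemma~\ref{lemma:arithmetic2} guarantees that $\Set{\s(0), \dots, \s(p^{n} - 1)}$ is a complete set of residues modulo $p^{n}$, this yields $\gamma(a^{m}) = \eta^{m}$ for all $m$, and so $\gamma$ is a morphism.

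I do not expect any genuine obstacle: the heavy lifting is already packaged in Proposition~\ref{prop:images_gamma} and the combinatorial Lemma~\ref{lemma:arithmetic2}, and the whole argument collapses to the observation that the single congruence $s \equiv 1 \pmod{\ord(\eta)}$ is equivalent to the full family $\s(k) \equiv k \pmod{\ord(\eta)}$, with $k = 2$ being the only binding case.
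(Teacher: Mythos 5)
Your proof is correct and follows essentially the same route as the paper: both use the formula $\gamma(a^{\s(k)})=\eta^{k}$ coming from Proposition~\ref{prop:images_gamma}, reduce the morphism condition to $\s(k)\equiv k\pmod{\ord(\eta)}$ for all $k$ via Lemma~\ref{lemma:arithmetic2}, and settle the equivalence with $s\equiv 1\pmod{\ord(\gamma(a))}$ by the trivial propagation in one direction and the case $k=2$ in the other. No issues to report.
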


\begin{proof}
  Let $\gamma(a)=\eta$. Then    $\gamma$ is a morphism if  and only if,
  for all $k$,
  \begin{equation*}
    \gamma(a^{\s(k)})=\gamma(a)^{\s(k)},
  \end{equation*}
  or equivalently 
  \begin{equation*}
    \eta^k=\eta^{\s(k)},
  \end{equation*}
  namely, $\s(k)\equiv k\pmod{\ord(\eta)}$. 
  The
  last  condition  is  easily  seen   to  be  equivalent  to  $s\equiv
  1\pmod{\ord(\eta)}$.
  
  In fact,  if $s\equiv 1\pmod{\ord(\eta)}$ then  clearly $\s(k)\equiv
  k\pmod{\ord(\eta)}$ for all $k$.

  On  the the  hand, if  $\s(k)\equiv k\pmod{\ord(\eta)}$  for all  $k$,
  then, in  particular, $\s(2)=s^0+s=  2+(s-1)\equiv 2\pmod{\ord(\eta)}$
  namely $s\equiv 1\pmod{\ord(\eta)}$.
  \end{proof}

\begin{corollary}
  \label{cor:ordp}
  In    the    notation    of   Corollary~\ref{cor:morA},    assume
  \begin{equation*}
    \ord(\gamma(a)) = p.
  \end{equation*}
  Then $\gamma$ is a morphism.
\end{corollary}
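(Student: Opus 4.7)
The plan is to combine Corollary~\ref{cor:morA} with the congruence $s \equiv 1 \pmod{p}$ that was already established (as equation~\eqref{eq:ords}) in the proof of Proposition~\ref{prop:images_gamma}. By Corollary~\ref{cor:morA}, it suffices to check that $a^{\gamma(a)} = a^{s}$ with $s \equiv 1 \pmod{\ord(\gamma(a))}$; under the hypothesis $\ord(\gamma(a)) = p$, this is exactly the condition $s \equiv 1 \pmod{p}$.

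First, I would recall that since $\gamma \colon A \to \Aut(G)$ is a RGF with $A = \Span{a}$ a cyclic $p$-group of order $p^{n}$ (with $p$ odd), Proposition~\ref{prop:images_gamma} guarantees that $A$ is $\gamma(a)$-invariant and $\ord(\gamma(a)) \mid p^{n}$. In particular $\gamma(a)_{\restriction A} \in \Aut(A) \cong (\Z/p^{n}\Z)^{*}$, and writing $a^{\gamma(a)} = a^{s}$, the order of $s$ modulo $p^{n}$ divides $\gcd(p^{n}, \phi(p^{n})) = p^{n-1}$. Lemma~\ref{lemma:arithmetic} then forces $s \equiv 1 \pmod{p}$, regardless of what $\ord(\gamma(a))$ is. This argument needs no extra work beyond what is already in the proof of Proposition~\ref{prop:images_gamma}.

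Then, under the additional assumption $\ord(\gamma(a)) = p$, the congruence $s \equiv 1 \pmod{p}$ is literally the hypothesis of Corollary~\ref{cor:morA}, so $\gamma$ is a morphism. There is essentially no obstacle: the whole point is that the congruence $s \equiv 1 \pmod{p}$, which is automatic for cyclic Sylow $p$-subgroups with $p$ odd, becomes strong enough to imply the morphism criterion exactly when $\ord(\gamma(a))$ is at most $p$. I would keep the write-up to two or three lines, simply citing the two previous results.
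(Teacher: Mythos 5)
Your proof is correct and follows the paper's own route: the paper proves this corollary precisely by combining Corollary~\ref{cor:morA} with the congruence $s \equiv 1 \pmod{p}$ of equation~\eqref{eq:ords}, which you simply re-derive in detail from Lemma~\ref{lemma:arithmetic} as in the proof of Proposition~\ref{prop:images_gamma}.
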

\begin{proof}
  This follows from Corollary~\ref{cor:morA} and equation~\eqref{eq:ords}. 
\end{proof}

\subsection{Duality}
\label{ssec:duality}

The  GF  associated  to  the  image $\rho(G)$  of  the  right  regular
representation is $\gamma(G)  = \Set{ 1 }$, and  the associated circle
operation on $G$ is the defining operation on the group $G$.

The GF  associated to the  image $\lambda(G) = \rho(G)^{\inv}$  of the
left  regular representation  (see
Proposition~\ref{prop:right-and-left}\eqref{item:inversion}) is
$\iota(y^{-1})$,  and the  associated 
circle operation  is the  opposite operation,  $x \circ y  = y  x$, as
$x^{\iota(y^{-1})} y = y x  = x^{\lambda(y)}$.  In particular, 
\begin{align*}
  \inv :\ &G \to (G, \circ)\\
  &x \mapsto x^{-1}
\end{align*}
is an isomorphism in this case.

Our  next result  is an  extension of  the above  pairing between  the
images  of the  right  and  the left  regular  representations to  all
regular subgroups of  $\Hol(G)$. This will be useful, as  it allows us
to halve the number of GF we have to consider, when $G$ is non-abelian,
and also  because it allows  us in some  circumstances to choose  a GF
with  a   kernel  that   is  more   suitable  for   calculations  (see
Proposition~\ref{prop:duality} below).
\begin{proposition}
  \label{prop:gammatilde}
  Let $G$ be  a finite group, $\gamma  : G \to \Aut(G)$ a  GF, $N$ the
  associated regular subgroup of $\Hol(G)$, and $\circ$ the associated
  operation. 

  Then
  \begin{align*}
    \tilde\gamma :\ &G \to \Aut(G)\\
    &x \mapsto \gamma(x^{-1}) \iota(x^{-1})
  \end{align*}
  is also a GF, which corresponds to the regular subgroup
  $N^{\inv}$, that is, the conjugate of $N$ under $\inv \in S(G)$. If
  $\tilde{\circ}$ is the operation associated to 
  $\tilde\gamma$, then 
  \begin{equation*}
    \inv : (G, \circ) \to (G, \tilde\circ)
  \end{equation*}
  is an isomorphism.
\end{proposition}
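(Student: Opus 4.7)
The plan is to exploit the one-to-one correspondence of Theorem~\ref{thm:gamma-for-regular} between regular subgroups of $\Hol(G)$ and GFs on $G$: I will identify $N^{\inv}$ as a regular subgroup of $\Hol(G)$, read off its associated GF, and then deduce the isomorphism $\inv\colon (G,\circ) \to (G,\tilde\circ)$ by chasing canonical isomorphisms. First, $N^{\inv}$ does land in $\Hol(G)$: by Proposition~\ref{prop:right-and-left}\eqref{item:inversion}, $\inv$ normalises $\Hol(G)$, and regularity is preserved by conjugation inside $S(G)$.

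The key computational step is to evaluate $\nu(g)^{\inv} = \inv\,\nu(g)\,\inv$ on an arbitrary $x \in G$. Using $\nu(g) = \gamma(g)\rho(g)$ and the fact that $\gamma(g)\in\Aut(G)$ respects inversion, one finds
\begin{equation*}
    x^{\nu(g)^{\inv}} = g^{-1} \cdot x^{\gamma(g)}.
\end{equation*}
In particular $1^{\nu(g)^{\inv}} = g^{-1}$, so the unique element of $N^{\inv}$ sending $1$ to $y$ is $\tilde\nu(y) := \nu(y^{-1})^{\inv}$. Substituting $g = y^{-1}$ and rewriting
\begin{equation*}
    x^{\tilde\nu(y)} = y \cdot x^{\gamma(y^{-1})} = \bigl(y \cdot x^{\gamma(y^{-1})} \cdot y^{-1}\bigr) \cdot y,
\end{equation*}
and recognising the conjugation by $y^{-1}$ as $\iota(y^{-1})$, shows that $\tilde\nu(y) = \bigl(\gamma(y^{-1})\iota(y^{-1})\bigr)\,\rho(y)$. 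By uniqueness of the decomposition~\eqref{eq:unique-form} this identifies the GF of $N^{\inv}$ as $\tilde\gamma(y) = \gamma(y^{-1})\iota(y^{-1})$. The fact that $\tilde\gamma$ satisfies~\eqref{eq:gamma-for-circ} is then automatic from Theorem~\ref{thm:gamma-for-regular}, since $N^{\inv}$ is already known to be a regular subgroup of $\Hol(G)$; no separate verification of the GFE is needed.

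For the isomorphism claim I would simply compose the canonical isomorphisms
\begin{equation*}
    (G,\circ) \xrightarrow{\nu} N \xrightarrow{(\,\cdot\,)^{\inv}} N^{\inv} \xleftarrow{\tilde\nu} (G,\tilde\circ),
\end{equation*}
where the two outer arrows come from Theorem~\ref{thm:gamma-for-regular}\eqref{item:nu-is-iso} and the middle one is conjugation by $\inv$ inside $S(G)$. Tracing an element $g \in (G,\circ)$ through these three maps, it lands on the unique $y$ satisfying $\tilde\nu(y) = \nu(g)^{\inv}$, and by the calculation above $y = g^{-1}$. Hence the composite is $g \mapsto g^{-1}$, i.e.~$\inv$. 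The only slightly delicate point I foresee is the bookkeeping in evaluating $\nu(g)^{\inv}$ on $x$, where the two copies of $\inv$ conspire with the automorphism $\gamma(g)$; everything else is purely formal.
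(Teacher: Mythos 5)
Your proposal is correct and follows essentially the same route as the paper: conjugate $N$ by $\inv$, evaluate $\nu(g)^{\inv}$ at a point to find $1^{\nu(g)^{\inv}} = g^{-1}$, and read off $\tilde\gamma(y)=\gamma(y^{-1})\iota(y^{-1})$ from the unique decomposition $\tilde\nu(y)=\tilde\gamma(y)\rho(y)$, with the GFE coming for free from the correspondence of Theorem~\ref{thm:gamma-for-regular}. The only cosmetic difference is in the last step, where the paper checks directly that $x\mathbin{\tilde\circ}y=(x^{-1}\circ y^{-1})^{-1}$, while you obtain $\inv$ as the composite $\tilde\nu^{-1}\circ(\,\cdot\,)^{\inv}\circ\nu$ of canonical isomorphisms; both are valid and equivalent.
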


\begin{proof}
  From Proposition~\ref{prop:right-and-left}\eqref{item:inversion}
  we have that $\inv$ normalises $\Hol(G)$.

  Consider the conjugate of  $N = \Set{ \gamma(y)  \rho(y) : y
    \in G }$  under $\inv$, which will be another  regular subgroup of
  $\Hol(G)$. We have
  \begin{align*}
    x^{(\gamma(y)  \rho(y))^{\inv}} 
    &=  
    (x^{-\gamma(y)} y)^{-1}  
    \\&=
    y^{-1} x^{\gamma(y)} 
    \\&= 
    x^{\gamma(y) \iota(y) \rho(y^{-1})}.
  \end{align*}
  In particular,  $1^{(\gamma(y) \rho(y))^{\inv}} =  y^{-1}$,
  that is, $(\gamma(y) \rho(y))^{\inv}$
  is the element  of $N^{\inv}$ taking $1$ to  $y^{-1}$. Therefore the
  GF associated to $N^{\inv}$ is
  \begin{align*}
    \tilde\gamma  :\  &G  \to   \Aut(G)\\  &y  \mapsto  \gamma(y^{-1})
    \iota(y^{-1}).
  \end{align*}
  The operation associated to $\tilde\gamma$ is
  \begin{multline*}
    x  \mathbin{\tilde{\circ}} y  
    =  
    x^{\tilde\gamma(y)}  y 
    =  
    x^{\gamma(y^{-1})   \iota(y^{-1})} y  
    =\\=
    y x^{\gamma(y^{-1})}  
    = 
    (x^{-\gamma(y^{-1})} y^{-1})^{-1} 
    = 
    (x^{-1} \circ y^{-1})^{-1},
  \end{multline*}
  so that, as in  the case of $G$ and its opposite  group,
  $\inv : (G,  \circ) \to (G, \tilde \circ)$ is an isomorphism. (See
  also~\cite[Lemma 1.4]{perfect}.)
\end{proof}

\begin{lemma}
  \label{lemma:duality}
  Let $G$ be a finite non-abelian group.
  Let $C$ be a non-trivial subgroup of $G$ such that:
  \begin{enumerate}
  \item\label{item:C-is-abelian}
    $C$ is abelian;
  \item\label{item:C-is-char}
    $C$ is characteristic in $G$;
  \item $C\cap Z(G)=\{1\}.$
  \end{enumerate}
  Let $\gamma\colon G\to\Aut(G)$  be a GF, and suppose  that for every
  $c\in C$  we have  $\gamma(c)=\iota(c^{-\sigma})$ for  some function
  $\sigma\colon C\to C$.  

  Then  $\sigma\in\End(C)$, and for every $a\in
  G$ the following relation holds in $\End(C)$:
  \begin{equation}
    \label{eq:sigma}
    \sigma \, \gamma(a)_{\restriction C} \, (\sigma - 1)
    =
    (\sigma - 1) \, \gamma(a)_{\restriction C} \, \iota(a)_{\restriction
      C} \, \sigma
  \end{equation}
\end{lemma}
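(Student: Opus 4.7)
The plan is to prove (a) by applying the GFE to two elements of $C$, and to derive (b) by computing the $(G,\circ)$-conjugate $a^{\ominus 1}\circ c\circ a$ of $c\in C$ by $a\in G$ and comparing two evaluations of $\gamma$ at this element.

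For (a), take $c_{1},c_{2}\in C$. Since $C$ is abelian and $c_{2}^{-\sigma}\in C$, conjugation by $c_{2}^{-\sigma}$ fixes $c_{1}$, so $c_{1}^{\gamma(c_{2})}=c_{1}$ and the GFE reduces to $\gamma(c_{1}c_{2})=\gamma(c_{1})\gamma(c_{2})=\iota(c_{1}^{-\sigma}c_{2}^{-\sigma})$. Since the hypothesis also gives $\gamma(c_{1}c_{2})=\iota((c_{1}c_{2})^{-\sigma})$, comparing these two expressions shows that $(c_{1}c_{2})^{\sigma}(c_{1}^{\sigma}c_{2}^{\sigma})^{-1}\in C\cap Z(G)=\{1\}$, so $\sigma\in\End(C)$.

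For (b), set $\alpha=\gamma(a)|_{C}$ and $\beta=\iota(a)|_{C}$, both in $\End(C)$ since $C$ is characteristic. The core step, which I expect to be the main obstacle, is the explicit computation
\begin{equation*}
a^{\ominus 1}\circ c\circ a \;=\; c^{\sigma\alpha+(1-\sigma)\alpha\beta}\;\in\; C,
\end{equation*}
which one obtains by expanding via Lemma~\ref{lemma:inverse_and_conjugacy}, using the identity $(a^{\ominus 1})^{\gamma(a)}=a^{-1}$ and the GFE value $\gamma(c^{\alpha}a)=\iota(c^{-\sigma})\gamma(a)$; this also shows $C\norm(G,\circ)$. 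Once this is in hand, I evaluate $\gamma$ at this element in two ways. On one side, since $\gamma\colon(G,\circ)\to\Aut(G)$ is a morphism (Theorem~\ref{thm:gamma-for-regular}\eqref{item:gamma-is-hom}) and $\phi^{-1}\iota(y)\phi=\iota(y^{\phi})$ for every $\phi\in\Aut(G)$,
\begin{equation*}
\gamma(a^{\ominus 1}\circ c\circ a)=\gamma(a)^{-1}\iota(c^{-\sigma})\gamma(a)=\iota(c^{-\sigma\alpha}).
\end{equation*}
On the other, applying the hypothesis on $\gamma|_{C}$ to the element $c^{\sigma\alpha+(1-\sigma)\alpha\beta}\in C$ yields $\iota(c^{-(\sigma\alpha+(1-\sigma)\alpha\beta)\sigma})$. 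Equating, and using $\ker(\iota)=Z(G)$ together with $C\cap Z(G)=\{1\}$, the endomorphism $(\sigma\alpha+(1-\sigma)\alpha\beta)\sigma-\sigma\alpha$ of $C$ must vanish; expanding and rearranging gives exactly $\sigma\alpha(\sigma-1)=(\sigma-1)\alpha\beta\sigma$, which is the required identity.
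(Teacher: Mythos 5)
Your proposal is correct and follows essentially the same route as the paper: the GFE applied to two elements of $C$ (using that $\gamma(c_2)$ acts on $C$ as conjugation by an element of the abelian group $C$) gives $\sigma\in\End(C)$, and the identity~\eqref{eq:sigma} comes from computing $a^{\ominus 1}\circ c\circ a=c^{\sigma\gamma(a)-\sigma\gamma(a)\iota(a)+\gamma(a)\iota(a)}$ via Lemma~\ref{lemma:inverse_and_conjugacy} and then evaluating $\gamma$ on this element in two ways, exactly as in the paper. Your stated exponent $\sigma\alpha+(1-\sigma)\alpha\beta$ agrees with the paper's, and the final rearrangement to $\sigma\alpha(\sigma-1)=(\sigma-1)\alpha\beta\sigma$ is the paper's conclusion.
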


\begin{proof}
  For $c_1, c_2\in C$ we have
  \begin{align*}
    \iota((c_1c_2)^{-\sigma})&=\gamma(c_1c_2)\\
    &=\gamma(c_1^{\gamma(c_2)^{-1}})\gamma(c_2)\\
    &=\gamma(c_1^{\iota(c_2^{\sigma})})\gamma(c_2)\\
    &=\iota(c_1^{-\sigma})\iota(c_2^{-\sigma})\\
    &=\iota( c_1^{-\sigma}c_2^{-\sigma}).
  \end{align*}
  Since  $C$  is  abelian,  and $C\cap  Z(G) = \Set{1}$,  we  obtain  that
  $\sigma$ is an endomorphism of $C$.

  For $a \in G$ and $c \in C$ we have
  \begin{equation}
    \label{eq:defining_tau}
    \begin{aligned}
      a^{\ominus 1} \circ c \circ a
      &=
      a^{-\gamma(a)^{-1} \gamma(c) \gamma(a)} c^{\gamma(a)} a
      \\&=
      a^{\iota(c^{- \sigma \gamma(a)})} a c^{\gamma(a) \iota(a)}
      \\&=
      c^{\sigma \gamma(a)} a^{-1} c^{- \sigma \gamma(a)} a c^{\gamma(a)
        \iota(a)}
      \\&=
      c^{\sigma \gamma(a) - \sigma \gamma(a) \iota(a) + \gamma(a)
        \iota(a)}.
    \end{aligned}
  \end{equation}
  On the other hand
  \begin{equation*}
    \gamma(a^{\ominus 1} \circ c \circ a)
    =
    \gamma(a)^{-1} \gamma(c) \gamma(a)
    =
    \iota(c^{- \sigma \gamma(a)}),
  \end{equation*}
  so  that, writing  $\tau$ for  the exponent  $\sigma  \gamma(a) -
  \sigma \gamma(a) \iota(a)  + \gamma(a) \iota(a)$ of $c$  in the last
  term of~\eqref{eq:defining_tau}, we get
  \begin{equation*}
    \gamma(c^{\tau})
    =
    \iota(c^{- \tau \sigma})
  \end{equation*}
  Since $C \cap Z(G) = \Set{1}$, we obtain $\sigma \gamma(a) =
  \tau \sigma$, and thus~\eqref{eq:sigma}
\end{proof}

\begin{proposition}
  \label{prop:duality}
  Let $G$ be a finite non-abelian group.
  Let $C$ be a subgroup of $G$ such that:
  \begin{enumerate}
  \item 
    $C = \Span{c}$ is cyclic, of order a power of the prime $r$,
  \item
    $C$ is characteristic in $G$,
  \item
    $C \cap Z(G) = \Set{1}$, and
  \item 
    there  is  $a \in  G$  which  induces  by  conjugation on  $C$  an
    automorphism whose order is not a power of $r$.
  \end{enumerate}
  Let $\gamma\colon G\to\Aut(G)$  be a GF, and suppose  that for every
  $c\in C$  we have  $\gamma(c) = \iota(c^{-\sigma})$, for  some function
  $\sigma \colon C \to C$.  

  Then
  \begin{enumerate}
  \item
    either $\sigma = 0$, that is, $C \le \ker(\gamma)$, 
  \item
    or $\sigma = 1$, that is, $\gamma(c) = \iota(c^{-1})$, so that $C
    \le \ker(\tilde\gamma)$.
  \end{enumerate}
\end{proposition}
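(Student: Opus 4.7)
The plan is to reduce the equation~\eqref{eq:sigma} of Lemma~\ref{lemma:duality} to a scalar equation in the commutative ring $\End(C) \cong \Z/r^k\Z$, where $r^k = \Size{C}$. Under this identification composition becomes multiplication, and elements of $\Aut(C) \cong (\Z/r^k\Z)^*$ correspond to units. Since $C$ is characteristic in $G$ and $\gamma(a), \iota(a) \in \Aut(G)$, both restrictions $\tau := \gamma(a)_{\restriction C}$ and $\alpha := \iota(a)_{\restriction C}$ lie in $\Aut(C)$, hence are units in $\Z/r^k\Z$.

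Viewing $\sigma$ via Lemma~\ref{lemma:duality} as an element of $\Z/r^k\Z$, the relation~\eqref{eq:sigma} reads $\sigma \tau (\sigma - 1) = (\sigma - 1) \tau \alpha \sigma$. Commutativity together with invertibility of $\tau$ reduce this to
\begin{equation*}
  \sigma(\sigma - 1)(1 - \alpha) \equiv 0 \pmod{r^k}.
\end{equation*}
The decisive step is then to show that $1 - \alpha$ is itself a unit. I would use the hypothesis that the order of $\alpha$ is not a power of $r$: any element of $(\Z/r^k\Z)^*$ congruent to $1$ modulo $r$ lies in the kernel of the reduction map $(\Z/r^k\Z)^* \to (\Z/r\Z)^*$, which has order $r^{k-1}$, and therefore has $r$-power order. (Incidentally, this tacitly forces $r$ to be odd, since for $r = 2$ the group $\Aut(C)$ is itself a $2$-group.) Hence $\alpha \not\equiv 1 \pmod{r}$, so $1-\alpha$ is coprime to $r$ and a unit; cancelling gives $\sigma(\sigma - 1) \equiv 0 \pmod{r^k}$. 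Since $\gcd(\sigma, \sigma - 1) = 1$ as integers, one of these two factors must absorb all of $r^k$, forcing $\sigma \equiv 0$ or $\sigma \equiv 1 \pmod{r^k}$.

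It then only remains to translate these two cases back. If $\sigma = 0$, then $\gamma(c) = \iota(1) = \id$ for every $c \in C$, so $C \le \ker(\gamma)$. If $\sigma = 1$, then $\gamma(c) = \iota(c^{-1})$, and using $\tilde\gamma(x) = \gamma(x^{-1})\iota(x^{-1})$ from Proposition~\ref{prop:gammatilde} a one-line computation gives $\tilde\gamma(c) = \iota(c)\iota(c^{-1}) = \iota(1) = \id$, so $C \le \ker(\tilde\gamma)$. The main conceptual obstacle is precisely the unit argument for $1-\alpha$, where the non-$r$-power-order hypothesis on conjugation by $a$ is indispensable; the rest is essentially a commutative-ring manipulation.
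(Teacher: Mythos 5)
Your proof is correct and follows essentially the same route as the paper: both reduce via Lemma~\ref{lemma:duality} to the scalar relation $\sigma(\sigma-1)\bigl(\iota(a)_{\restriction C}-1\bigr)=0$ in $\End(C)\cong\Z/r^k\Z$, use the non-$r$-power-order hypothesis to see that $\iota(a)_{\restriction C}-1$ is a unit (the paper phrases this as ``not a zero divisor''), and then conclude $\sigma\in\{0,1\}$ — your coprimality of $\sigma$ and $\sigma-1$ is just the paper's observation that $\End(C)$ is local. The final translation of the case $\sigma=1$ into $C\le\ker(\tilde\gamma)$ is also the same computation.
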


Note that the hypotheses of Proposition~\ref{prop:duality} contain
those of Lemma~\ref{lemma:duality}.

\begin{proof}
  It is immediate that $\sigma \in \End(C)$, so that we can identify
  $\sigma$ with an integer modulo the order of $c$.
  
  Since  $\End(C)$ is  abelian,  and  $\gamma(a)_{\restriction C}  \in
  \Aut(C)$, we obtain from~\eqref{eq:sigma} the equality
  \begin{equation*}
    \sigma (\sigma - 1) (\iota(a)_{\restriction C} - 1) = 0
  \end{equation*}
  in $\End(C)$, for all $a \in G$.  Choose now $a \in G$ which induces
  on $C$  an automorphism  $\iota(a)$ whose  order is  not a  power of
  $r$. Then $\iota(a)_{\restriction C} - 1$ is not a zero divisor in
  $\End(C)$, so that 
  \begin{equation*}
    \sigma (\sigma - 1) = 0.
  \end{equation*}
  Since $\End(C)$ is a local ring, we obtain
  that either $\sigma = 0$,
  or $\sigma = 1$.
  
  If the latter holds we have then
  \begin{equation*}
    \tilde\gamma(c) 
    = 
    \gamma(c^{-1}) \iota(c^{-1}) 
    =
    \iota(c) \iota(c^{-1})
    = 
    1.
  \end{equation*}
\end{proof}

\begin{corollary}
  \label{cor:duality}
  Let $G$  be a  finite non-abelian  group, $r$  a prime  dividing the
  order of $G$, and $B$ a Sylow $r$-subgroup of $G$.

  Suppose that
  \begin{itemize}
  \item $B$ is cyclic, and
  \item $B$ contains a subgroup $C$  of order $r$, which satisfies the
    hypotheses of 
    Proposition~\ref{prop:duality}. 
  \end{itemize}
  
  Let
  $\gamma$ be  a GF on  $G$. For each group  $\mathcal G$ of  the same
  order as $G$, let
  \begin{equation*}
    k_r(\mathcal G)
    =
    \Size{
      \Set{
        \text{$\gamma$ GF on $G$} :
        r \mid  \Size{\ker(\gamma)} \text{  and  }  
        (G, \circ)  \cong  \mathcal  G
      }
    }.
  \end{equation*}
  Then
  \begin{equation*}
    e'(\mathcal  G,  G)
    =
    \Size{
      \Set{
        \text{$\gamma$ GF on $G$}
        :
        (G, \circ) \cong \mathcal G
      }
    }
    =
    2k_r (\mathcal G).
  \end{equation*}
\end{corollary}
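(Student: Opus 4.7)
The plan is to use the duality $\gamma\mapsto\tilde\gamma$ of Proposition~\ref{prop:gammatilde} as a fixed-point-free involution on the set $S$ of GFs on $G$ with $(G,\circ)\cong\mathcal{G}$, pairing GFs with $r\mid\Size{\ker(\gamma)}$ bijectively with those with $r\mid\Size{\ker(\tilde\gamma)}$. Since $\inv\colon(G,\circ)\to(G,\tilde\circ)$ is an isomorphism, the duality preserves the condition $(G,\circ)\cong\mathcal{G}$, so such a pairing immediately yields $e'(\mathcal{G},G)=2k_r(\mathcal{G})$.

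I would first reformulate the kernel condition. Since $C$ is characteristic (hence normal) in $G$ and is the unique subgroup of order $r$ of the cyclic group $B$, it is contained in every Sylow $r$-subgroup of $G$, and is the unique subgroup of order $r$ in each of them. Consequently every element of order $r$ in $G$ lies in $C$, and combined with $\ker(\gamma)\le(G,\cdot)$ from Lemma~\ref{lemma:kergamma} this yields $r\mid\Size{\ker(\gamma)}$ iff $C\le\ker(\gamma)$ iff $\gamma(c)=1$ for all $c\in C$ (``Case~1''). Dually, Proposition~\ref{prop:gammatilde} gives $\tilde\gamma(c)=\gamma(c^{-1})\iota(c^{-1})$, so $r\mid\Size{\ker(\tilde\gamma)}$ is equivalent to $\gamma(c)=\iota(c^{-1})$ for every $c\in C$ (``Case~2''). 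The two cases are mutually exclusive, for if both held then $\iota(c^{-1})=1$ for every $c\in C$, forcing $C\le Z(G)$ and contradicting $C\cap Z(G)=\{1\}$.

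The crucial step is to show that every GF in $S$ lies in Case~1 or Case~2. I would apply Proposition~\ref{prop:duality}, whose conclusion gives exactly this dichotomy (via $\sigma=0$ or $\sigma=1$) once its hypothesis $\gamma(c)\in\iota(C)$ for every $c\in C$ is verified. Since $C$ is characteristic and hence $\gamma(C)$-invariant, $\gamma|_C$ is a RGF on the cyclic group $C$ of prime order $r$; Proposition~\ref{prop:images_gamma} then forces $\ord(\gamma(c))\mid r$, while $\gamma(c)|_C\in\Aut(C)$ has order dividing $\gcd(r,r-1)=1$, so $\gamma(c)$ fixes $C$ pointwise. The GFE on $C$ thus collapses to $\gamma(c_1c_2)=\gamma(c_1)\gamma(c_2)$, making $\gamma|_C$ a group morphism with cyclic image of order $1$ or $r$ inside $C_{\Aut(G)}(C)$. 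The main obstacle is to show that when $\gamma(C)$ is nontrivial it must coincide with $\iota(C)$; my intended route is to exploit that $\iota(C)$ is normal in $\Aut(G)$ (a consequence of $C$ being characteristic) together with the hypothesis that some $a\in G$ induces on $C$ an automorphism of order coprime to $r$, which should constrain the $\gamma(a)$-conjugates of $\gamma(C)$ enough to rule out every subgroup of order $r$ of $C_{\Aut(G)}(C)$ other than $\iota(C)$. Once this is secured, Proposition~\ref{prop:duality} yields the dichotomy, the two cases partition $S$, and the involution $\gamma\mapsto\tilde\gamma$ bijectively swaps them, giving $\Size{S}=2k_r(\mathcal{G})$ as required.
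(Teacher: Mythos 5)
Your skeleton is the same as the paper's: split the GFs with $(G,\circ)\cong\mathcal G$ according to whether $r$ divides $\Size{\ker(\gamma)}$, note that $\gamma\mapsto\tilde\gamma$ of Proposition~\ref{prop:gammatilde} is an involution preserving the isomorphism type of $(G,\circ)$, and use Proposition~\ref{prop:duality} to see that it interchanges the two classes. Your explicit reformulations ($r\mid\Size{\ker(\gamma)}$ iff $C\le\ker(\gamma)$, via Lemma~\ref{lemma:kergamma} and the fact that all elements of order $r$ of $G$ lie in $C$; $r\mid\Size{\ker(\tilde\gamma)}$ iff $\gamma(c)=\iota(c^{-1})$ on $C$; mutual exclusivity from $C\cap Z(G)=\Set{1}$) are correct and usefully spell out what the paper leaves implicit. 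The problem is the step you yourself flag as crucial: that every $\gamma$ in the set falls into one of the two cases. This requires the hypothesis of Proposition~\ref{prop:duality} that $\gamma(c)=\iota(c^{-\sigma})$ for all $c\in C$, and you neither assume it nor prove it --- you only sketch an ``intended route''. That is a genuine gap, and it is a step your route is unlikely to close.

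In the paper this condition is not proved inside the corollary at all: it is inherited as part of the hypotheses (that is what ``satisfies the hypotheses of Proposition~\ref{prop:duality}'' is meant to carry, for every GF under consideration), and it is verified separately, group by group, in Proposition~\ref{prop:duality_p2q}; it is not a formal consequence of the structural conditions on $C$ alone. Concretely, for $G=\cC_{p^2}\rtimes\cC_q$ (type 4, $r=p$, $C=\Span{b^p}$) the centraliser of $C$ in $\Aut(G)$ is the whole Sylow $p$-subgroup $\Span{\iota(b)}\rtimes\Span{\psi}$ of $\Aut(G)$, which contains $p+1$ subgroups of order $p$, among them $\Span{\psi}\ne\iota(C)$ with $\psi$ not inner; moreover $\psi$ is centralised by suitable elements of order $q$ of $\Aut(G)$, so normality of $\iota(C)$ in $\Aut(G)$ together with the coprime action of $a$ on $C$ does not single out $\iota(C)$ among these subgroups. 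The paper excludes $\gamma(C)\le\Span{\psi}$ not by such subgroup-level constraints, but by a computation on the full Sylow subgroup $B$: writing $\gamma(b)=\iota(b^{-\sigma})\psi^{t}$ for a generator $b$ of $B$, it shows $b^{\circ p}=b^{p}$ and hence $\gamma(b^{p})=\gamma(b)^{p}=\iota(b^{-\sigma p})$, using Corollary~\ref{cor:same_generator} and Remark~\ref{rem:ah-the-p-powers}. So either take the condition $\gamma(c)=\iota(c^{-\sigma})$ as part of the corollary's hypotheses (as the paper does), after which your argument is complete and matches the paper's, or you must supply a case-specific verification of it; the dichotomy does not follow from the hypotheses on $C$ by the argument you outline.
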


\begin{proof} 
  Write
  \begin{align*}
    &X = \Set{
      \text{$\gamma$ GF on $G$}
      :
      (G, \circ) \cong \mathcal G
    }, \\
    &X_1 = \Set{
      \text{$\gamma$ GF on $G$}
      :
      \text{$(G, \circ) \cong \mathcal G$ and $r \mid \Size{\ker(\gamma)}$} 
    }, \\
    &X_2 = \Set{ 
      \text{$\gamma$ GF on $G$}
      :
      \text{$(G, \circ) \cong \mathcal G$ and $r \nmid \Size{\ker(\gamma)}$}
    }.
  \end{align*}
  We have
  \begin{equation*} 
    e'(\mathcal G, G)=|X|=|X_1|+|X_2|=k_r(\mathcal G)+|X_2|.
  \end{equation*}
  If we  show that there  is a bijection  between $X_1$ and  $X_2$, it
  will follow that $e'(\mathcal G, G)=2k_r(\mathcal G)$. Consider
  \begin{align*}
    \psi \colon X &\to X \\
    \gamma &\mapsto \tilde\gamma ,
  \end{align*}
  where $\tilde\gamma$ is as in Proposition~\ref{prop:gammatilde}. The
  map $\psi$ is well defined, indeed $\tilde\gamma$ is a GF on $G$ and
  $(G,\tilde\circ)\cong (G,\circ) \cong \mathcal  G$ (see the proof of
  Proposition~\ref{prop:gammatilde}); moreover
  \begin{equation*}
    \psi^2(\gamma)
    = 
    \psi(\tilde\gamma)
    =
    \tilde{\tilde{\gamma}}.
  \end{equation*}
  By Theorem~\ref{thm:gamma-for-regular} each GF on $G$ corresponds to
  a unique  regular subgroup of $\Hol(G)$,  so let $N$ be  the regular
  subgroup   corresponding   to   $\gamma$;   then,   by
  Proposition~\ref{prop:gammatilde},   the  regular   subgroup
  corresponding   to 
  $\tilde{\tilde{\gamma}}$ is  ${(N^{\inv})}^{\inv} =  N$, so  that we
  get  that   $\psi^2=1$,  and  $\psi$  is   bijective. Now, using
  Proposition~\ref{prop:duality},  we  get that  $\psi(X_2) = X_1$,
  so that
  $\Size{X_2} = \Size{X_1}$.
\end{proof}

Note that this duality is equivalent to the notion of an
\emph{opposite skew brace} as introduced by Koch and Truman
in~\cite{opposite}. In fact, given a brace $(G, \cdot, \circ)$, Koch
and Truman define the opposite brace to be $(G, \cdot', \circ)$, where
$x \cdot' y = y x$ gives the opposite group $(G, \cdot')$ of $(G,
\cdot)$. With our construction, the circle operation associated to the
regular subgroup $N^{\inv}$ is given by $x \mathbin{\tilde\circ} y = x^{\tilde
  \gamma(y)} \cdot y = x^{\gamma(y^{-1}) \iota(y^{-1})} \cdot y = y \cdot
x^{\gamma(y^{-1})}$.

Now $\inv : (G, \cdot', \circ) \to
(G, \cdot, \tilde\circ)$ is an isomorphism of skew braces, as for $x, y \in
G$ we have $(x \cdot' y)^{\inv} = (y \cdot x)^{-1} = x^{-1} \cdot
y^{-1} = x^{\inv} \cdot y^{\inv}$, and, $(x \circ y)^{\inv} =
(x^{\gamma(y)} \cdot y)^{-1} = y^{-1} \cdot x^{-\gamma(y)} = x^{-\gamma(y)
  \iota(y)} \cdot y^{-1} = x^{\inv}
\mathbin{\tilde\circ} y^{\inv}$.

\subsection{An application: groups of order $p q$}
\label{subs:pq}

To exemplify our methods, we first apply  them to the case, dealt with by
Byott in~\cite{Byott04}, of  groups of order $p q$, where  $p$ and $q$
are distinct primes. We also recover the classification of skew
braces of order $p q$ of Acri and Bonatto~\cite{AcriBonatto_pq}.

So let $p > q$ be two primes. We will write $\cC_{p q}$ for the cyclic
group of order $p q$, and $\cC_{p} \rtimes \cC_{q}$ for the non-abelian
one, which occurs when $q \mid p - 1$.

Byott has proved the following

\begin{theo}[\protect{\cite[Section 6]{Byott04}}]
Let $L/K$ be a Galois field  extension of
order $p q$, and let $\Gamma = \Gal(L/K)$.

Then the following
table gives  the numbers  $e(\Gamma, G)$ of  Hopf-Galois  structures on
$L/K$ of type $G$ for each group $G$ of order $p q$.
\begin{center}
  \begin{tabular}{r|cc}
    \diagbox[width=4.0em]{$\Gamma$}{$G$} & $\cC_{p q}$ & $\cC_p\rtimes\cC_q$ \\
    \hline
    $\cC_{p q}$ & $1$ & $2(q-1)$ \\
    $\cC_p\rtimes\cC_q$ & $p$ & $2(p q-2p+1)$ \\
  \end{tabular}
\end{center}
\end{theo}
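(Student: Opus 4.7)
The plan is to invoke Byott's translation (Theorem~\ref{th:byott96}) and, for each pair $(\Gamma, G)$, count the gamma functions $\gamma \colon G \to \Aut(G)$ such that $(G, \circ) \cong \Gamma$. Write $P = \langle a \rangle$ for the Sylow $p$-subgroup of $G$, which is unique and thus characteristic, and $Q = \langle b \rangle$ for a Sylow $q$-subgroup; since $P$ is characteristic it is $\gamma(G)$-invariant for every GF $\gamma$. Recall $\Size{\Aut(\cC_{pq})} = (p-1)(q-1)$ and, when $q \mid p-1$, $\Size{\Aut(\cC_{p} \rtimes \cC_{q})} = p(p-1)$.

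First, take $G = \cC_{pq}$. Since $\Size{\Aut(G)} = (p-1)(q-1)$ is coprime to $p$, Proposition~\ref{prop:images_gamma} applied to $P$ forces $\gamma(a) = 1$, so $P \le \ker(\gamma)$. By Proposition~\ref{prop:1.4}, whose lifting hypotheses are trivial since $G$ is abelian and $P$ is characteristic, $\gamma$ is the lift of an RGF $\gamma' \colon Q \to \Aut(G)$, which by Proposition~\ref{prop:images_gamma} amounts to choosing $\gamma'(b) \in \Aut(G) \cong \cC_{p-1} \times \cC_{q-1}$ with $\gamma'(b)^q = 1$. There are $\gcd(q, p-1)$ such elements, namely $1$ when $q \nmid p-1$ and $q$ when $q \mid p-1$. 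Using Lemma~\ref{lemma:inverse_and_conjugacy} one finds $b^{\ominus 1} \circ a \circ b = a^{\gamma'(b)}$ in $(G, \circ)$, so $(G, \circ)$ is abelian iff $\gamma'(b)|_P = \mathrm{id}$, which (combined with $\gcd(q, q-1) = 1$) forces $\gamma'(b) = 1$. Hence exactly $1$ GF yields $(G, \circ) \cong \cC_{pq}$, and the remaining $q - 1$ (when $q \mid p - 1$) yield $(G, \circ) \cong \cC_{p} \rtimes \cC_{q}$.

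Next take $G = \cC_{p} \rtimes \cC_{q}$, which requires $q \mid p - 1$, with defining relation $bab^{-1} = a^\omega$ for some $\omega \in (\Z/p\Z)^*$ of order $q$. The unique Sylow $p$-subgroup of $\Aut(G)$ is $\Inn(P) = \Set{\iota(a^{-\sigma}) : \sigma \in \Z/p\Z}$, so Proposition~\ref{prop:images_gamma} gives $\gamma(a) = \iota(a^{-\sigma})$ for some $\sigma$. Proposition~\ref{prop:duality} applies with $C = P$ (characteristic abelian; $P \cap Z(G) = \Set{1}$ since $Z(G) = \Set{1}$; $b$ acts on $P$ as an order-$q$ automorphism, and $q$ is not a power of $p$), yielding $\sigma \in \Set{0, 1}$. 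By Corollary~\ref{cor:duality} it suffices to count GFs with $P \le \ker(\gamma)$ and double. Such $\gamma$ lifts from $\gamma' \colon Q \to \Aut(G)$, hence is determined by $\gamma(b) = \phi_{r, s} \colon a \mapsto a^r,\ b \mapsto a^s b$ with $\phi_{r, s}^q = 1$. A direct computation shows this holds precisely for $(r, s) = (1, 0)$ or for each of the $q - 1$ nontrivial $q$-th roots $r$ modulo $p$ with $s \in \Z/p\Z$ arbitrary, totalling $1 + p(q-1) = pq - p + 1$ gamma functions. The same conjugation computation gives $b^{\ominus 1} \circ a \circ b = a^{r \omega^{-1}}$, so $(G, \circ)$ is abelian iff $r = \omega$, accounting for $p$ cases; the remaining $pq - 2p + 1$ yield $(G, \circ) \cong \cC_{p} \rtimes \cC_{q}$. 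Doubling via Corollary~\ref{cor:duality} produces $e'(\cC_{pq}, G) = 2p$ and $e'(\cC_{p} \rtimes \cC_{q}, G) = 2(pq - 2p + 1)$.

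Finally, the formula $e(\Gamma, G) = \Size{\Aut(\Gamma)} / \Size{\Aut(G)} \cdot e'(\Gamma, G)$ recovers every entry of the stated table. The main obstacle is the non-abelian case: one must recognise that $C = P$ satisfies the hypotheses of Proposition~\ref{prop:duality} so that Corollary~\ref{cor:duality} halves the work, and then explicitly enumerate the order-$q$ elements of $\Aut(\cC_{p} \rtimes \cC_{q})$ and sort them by whether they produce an abelian $(G, \circ)$.
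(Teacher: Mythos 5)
Your overall strategy is the same as the paper's (Byott's translation, reduction via Proposition~\ref{prop:duality} and Corollary~\ref{cor:duality} with $C=P$ to the case $P\le\ker(\gamma)$, then doubling and sorting the surviving gamma functions by whether $(G,\circ)$ is abelian), and your final numbers are correct. However, there is a genuine gap in the non-abelian case $G=\cC_p\rtimes\cC_q$. You assert that every GF with $P\le\ker(\gamma)$ ``lifts from $\gamma'\colon Q\to\Aut(G)$'' for your fixed Sylow $q$-subgroup $Q=\Span{b}$, and then let $\gamma(b)=\phi_{r,s}$ range over \emph{all} automorphisms of order dividing $q$. But by Definition~\ref{def:GF} a relative gamma function on $Q$ requires $Q$ to be $\gamma'(Q)$-invariant, i.e.\ $b^{\phi_{r,s}}=a^sb\in Q$, which forces $s=0$; likewise the restriction step of Proposition~\ref{prop:1.4} only produces a RGF on $Q$ when $Q$ is $\gamma(Q)$-invariant, which fails for $s\ne 0$. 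So the lifting correspondence you invoke accounts only for the $q$ automorphisms $\phi_{r,0}$, not for the $1+p(q-1)$ you count: the realizability of the $\phi_{r,s}$ with $s\ne 0$ as values $\gamma(b)$ of genuine GF's is precisely the point that needs an argument, and as written it is asserted rather than proved. The paper closes exactly this point by noting $[G,\gamma(G)]\le P=\ker(\gamma)$, so that by Lemma~\ref{Lemma:gamma_morfismi} the GF's with kernel containing $P$ are precisely the group morphisms $G\to\Aut(G)$ killing $P$, i.e.\ the homomorphisms $G/P\cong\cC_q\to\Aut(G)$, which are your $\phi$'s of order dividing $q$; alternatively one follows Remark~\ref{Argument} and lifts from the unique $\gamma(G)$-invariant Sylow $q$-subgroup (namely $\Span{a'}$ when $\gamma(G)=\Span{\iota(a')}$, in general different from your fixed $Q$), or one verifies directly that $\gamma(a^ib^j):=\phi^j$ satisfies the GFE. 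Any of these repairs the argument; without one of them the step would in fact undercount by restricting to $s=0$.

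A smaller point: Proposition~\ref{prop:images_gamma} is stated for an odd prime, so citing it for the cyclic subgroup $Q$ of order $q$ (as you do for $G=\cC_{pq}$) is not literally licensed when $q=2$, a case the theorem does cover since the paper takes $p>q$; for $\Size{Q}=q$ prime the needed statement is elementary, or it can again be bypassed by the morphism argument, but it should be addressed. Apart from these issues, the abelian case and the application of the duality machinery with $C=P$ agree with the paper's treatment.
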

We now compute with our methods the number  $e'(\Gamma, G)$ of the
regular subgroups of
$\Hol(G)$ which are isomorphic to $\Gamma$, in the form
\begin{center}
  \begin{tabular}{r|cc}
    \diagbox[width=4.0em]{$\Gamma$}{$G$} & $\cC_{p q}$ & $\cC_p\rtimes\cC_q$ \\
    \hline
    $\cC_{p q}$ & $1$ & $2p$\\
    $\cC_p\rtimes\cC_q$ &  $q - 1$  & $2(p q-2p+1)$ \\
  \end{tabular}
\end{center}
from which the previous theorem can be obtained using
formula~\eqref{eq:e_vs_e-prime} of Theorem~\ref{th:byott96}.

In terms
of conjugacy classes of regular subgroups, we have
\begin{theorem}
  Let $G = (G, \cdot)$ be a group of order $p q$, where $p, q$ are
  primes, with $p > q$.

  For each group $\Gamma$ of order $p q$, the following table gives
  equivalently 
  \begin{enumerate}
  \item
    the number (and lengths) of conjugacy
    classes within $\Hol(G)$  of regular 
    subgroups isomorphic to $\Gamma$;
  \item
    the number of isomorphism classes of braces $(G, \cdot, \circ)$
    such that $\Gamma \cong (G, \circ)$.
  \end{enumerate}
  \begin{center}
    \begin{tabular}{r|cc}
      \diagbox[width=4.0em]{$\Gamma$}{$G$} & $\cC_{p q}$ & $\cC_p\rtimes\cC_q$ \\
      \hline
      $\cC_{p q}$ & $(1, 1)$ & $(2, p)$ \\
      $\cC_p\rtimes\cC_q$ & $(1, q-1)$ & $(2, 1), (2 (q - 2), p)$ \\
    \end{tabular}
  \end{center}
  Here $(c, l)$ denotes $c$ conjugacy classes of length $l$; the full
  table refers to the case when $q \mid p - 1$, and the $1 \times 1$
  upper left sub-table refers to the case $q \nmid p - 1$.
\end{theorem}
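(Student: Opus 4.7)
The plan is to apply the gamma function framework throughout: by Theorem~\ref{thm:gamma-for-regular}, counting regular subgroups of $\Hol(G)$ of type $\Gamma$ up to $\Hol(G)$-conjugacy is the same as counting GFs $\gamma : G \to \Aut(G)$ with $(G, \circ) \cong \Gamma$ up to the $\Aut(G)$-conjugation of Lemma~\ref{lemma:conjugacy}. I will organize the GFs by their kernel, which by Lemma~\ref{lemma:kergamma} is simultaneously a subgroup of $G$ and a normal subgroup of $(G, \circ)$; the formula~\eqref{eq:e_vs_e-prime} of Theorem~\ref{th:byott96} then converts the GF counts into the Hopf-Galois counts $e(\Gamma, G)$.

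For $G = \cC_{p q}$, the automorphism group $\Aut(G) \cong \cC_{p-1} \times \cC_{q-1}$ has order coprime to $p$, so $p$ divides $\Size{\ker(\gamma)}$, forcing $\ker(\gamma) \in \{G, P\}$, where $P$ is the unique Sylow $p$-subgroup of $G$. Kernel $G$ yields the trivial GF and $(G, \circ) \cong \cC_{p q}$. Kernel $P$ is possible only when $q \mid p - 1$; then $\gamma$ descends to $G/P \cong \cC_{q}$ and is determined by a choice of $\eta = \gamma(b) \in \Aut(G)$ of order $q$. There are exactly $q - 1$ such $\eta$, all lying in the $\Aut(P)$-factor of $\Aut(G)$; each yields a valid GF with $(G, \circ) \cong \cC_{p} \rtimes \cC_{q}$ (verifiable directly from the GFE, or via Proposition~\ref{prop:1.4}), and the $\Aut(Q)$-factor acts transitively on these $q - 1$ parameters, giving a single conjugacy class of length $q - 1$.

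For $G = \cC_{p} \rtimes \cC_{q}$ (which requires $q \mid p - 1$), the subgroups of $G$ of proper non-trivial order are $P$ and the $p$ Sylow $q$-subgroups. Corollary~\ref{cor:duality} applies with $C = P$ (characteristic, abelian, $P \cap Z(G) = \{1\}$, and $b$ induces on $P$ an automorphism of order $q \ne p$), pairing each GF with its dual and reducing the enumeration to $e'(\Gamma, G) = 2 k_{p}(\Gamma)$; hence I need only count GFs with $P \le \ker(\gamma)$. Kernel $G$ gives the trivial GF with $(G, \circ) \cong \cC_{p} \rtimes \cC_{q}$. Kernel $P$ is handled by the lifting result of Proposition~\ref{prop:1.4} with $A = \langle b \rangle$ and $B = P$: a lift exists whenever $\eta = \gamma(b) \in \Aut(G) \cong P \rtimes \Aut(P)$ has its $\Aut(P)$-component $s$ of order $q$ in $(\Z/p\Z)^{*}$, producing $p(q - 1)$ admissible GFs parametrized by a pair $(s, r)$ with $r \in \Z/p\Z$. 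Comparing $a \circ b = a^{s} b$ with $b \circ a = a^{m} b$ (where $m$ is the defining conjugation exponent of $G$) singles out $s = m$ as the value giving $(G, \circ) \cong \cC_{p q}$ (hence $p$ such GFs), with the remaining $p(q - 2)$ GFs yielding $(G, \circ) \cong \cC_{p} \rtimes \cC_{q}$; duality then doubles these to the tabulated $2p$ and $2(p q - 2 p + 1)$.

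The main obstacle will be computing the $\Aut(G)$-orbits on the non-trivial GFs with $\ker(\gamma) = P$. A direct calculation inside $\Aut(G) \cong P \rtimes \Aut(P)$ should show that conjugation by $\beta = \phi_{s', r'}$ sends the parameter $(s, r)$ to $(s, (r + (s - 1) r')/s')$: the value of $s$ is preserved, and since $s \ne 1$ the action on $r$ is transitive. Thus each admissible $s$ gives one orbit of length $p$, producing one orbit for $\Gamma = \cC_{p q}$ and $q - 2$ orbits for $\Gamma = \cC_{p} \rtimes \cC_{q}$; combined with the two length-$1$ orbits from the trivial GF and its dual (corresponding to $\rho(G)$ and $\lambda(G)$, both normal in $\Hol(G)$), duality yields the orbit pattern $(2, p)$ for $\Gamma = \cC_{p q}$ and $(2, 1), (2(q - 2), p)$ for $\Gamma = \cC_{p} \rtimes \cC_{q}$, matching the table.
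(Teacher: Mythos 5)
Your strategy coincides with the paper's: reduce to gamma functions via Theorem~\ref{thm:gamma-for-regular}, treat conjugacy of regular subgroups as the $\Aut(G)$-action of Lemma~\ref{lemma:conjugacy}, settle $G=\cC_{pq}$ by the kernel argument, and settle $G=\cC_p\rtimes\cC_q$ by the duality of Corollary~\ref{cor:duality} followed by an explicit orbit computation. Your totals ($q-1$, respectively $p(q-1)$, nontrivial GFs), the identification of the isomorphism types, and your conjugation formula $(s,r)\mapsto\bigl(s,(r+(s-1)r')/s'\bigr)$ (correct up to the left/right conventions) all reproduce the table, with orbits $\{s\}\times\Z/p\Z$ of length $p$ exactly as in the paper.

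The one step that does not follow as written is the appeal to Proposition~\ref{prop:1.4} with a \emph{fixed} $A=\langle b\rangle$ to produce all $p(q-1)$ GFs with kernel $P$ in the non-abelian case. The lifting direction of that proposition requires $\gamma'$ to be an RGF, hence (Definition~\ref{def:GF}) that $A$ be $\gamma'(A)$-invariant; but an order-$q$ automorphism $\eta$ whose ``translation'' parameter $r$ relative to $b$ is nonzero sends $b$ to $a^{r}b\notin\langle b\rangle$, so it does not preserve $A$, and for those $(p-1)(q-1)$ parameters the cited proposition yields nothing from this particular $A$. The count is nevertheless correct, and the justification can be repaired in either of two ways: as in the paper's Remark~\ref{Argument} and its $pq$ subsection, lift instead from the unique $\gamma(G)$-invariant Sylow $q$-subgroup, which varies with $\eta$ (it is $\langle c\rangle$ where $\langle\eta\rangle=\langle\iota(c)\rangle$, all order-$q$ automorphisms being inner), the $p$ choices of that subgroup times the $q-1$ admissible exponents giving your $p(q-1)$; or observe that every automorphism of $G$ acts trivially on $G/P$, so that $[G,\gamma(G)]\le P\le\ker(\gamma)$ and Lemma~\ref{Lemma:gamma_morfismi} identifies the GFs with $P\le\ker(\gamma)$ with the homomorphisms $G\to\Aut(G)$ killing $P$, i.e.\ with the assignments $a^{i}b^{j}\mapsto\eta^{j}$ for the $p(q-1)$ elements $\eta$ of order $q$ (this is essentially the paper's own argument). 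With that repair the remainder of your proof goes through and matches the paper's.
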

We obtain that when $q \mid p - 1$ there are $2 q + 2$ isomorphism
classes of braces of order $p q$, which coincides with the results
of~\cite{AcriBonatto_pq}. 

Let $\gamma$ be a GF on $G$,  let $B$ be the Sylow $p$-subgroup of $G$
and $A$ a Sylow $q$-subgroup.

If $G = \cC_{p q}$, then $B \le \ker(\gamma)$,
since  in  $\Aut(G) \cong  \cC_{p-1}\times\cC_{q-1}$  there  are  no
elements of order  $p$.  

If  $G =  \cC_p\rtimes\cC_q$, we may take  $r  = p$  and  $C =  B$ in  the
hypotheses  of Corollary~\ref{cor:duality} here, so that we need only to
consider the case
$B \le \ker(\gamma)$.

If $\ker(\gamma) = G$, we get the right regular representation, whose
image forms a conjugacy class in itself.

Suppose thus  $\ker(\gamma) = B$,  so that $\Size{\gamma(G)} =  q$. We
claim that there is  a unique $\gamma(G)$-invariant Sylow $q$-subgroup
$A$  of $G$.  This  is clearly  true for  $G =  \cC_{p q}$.  When  $G =
\cC_p\rtimes\cC_q$, the $q$-elements of $\Aut(G) \cong \cC_{p} \rtimes
\cC_{p-1}$  are  inner  automorphisms,  so that  $\gamma(G)  =  \Span{
  \iota(a) }$ for  some $a \in G$  of order $q$. It follows  that 
$A = \Span{a}$ is the unique $\gamma(G)$-invariant Sylow $q$-subgroup.

Now $A \gamma(G)$ is a subgroup of order $q^{2}$ of $\Hol(G)$, so that
$[A, \gamma(G)] = 1$, and thus 
\begin{equation*}
  [G, \gamma(G)] 
  =
  [A B, \gamma(G)]
  =
  [B, \gamma(G)]
  =
  [B, \Span{\iota(a)}]
  =
  B 
  = 
  \ker(\gamma).
\end{equation*}
By Lemma~\ref{Lemma:gamma_morfismi},  all such GF's are  precisely the
morphisms $\gamma : G \to \Aut(G)$ of groups with kernel $B$.

If $G = \cC_{p q}$,  and $q \nmid p - 1$, there  are no such morphisms,
as in this case $q \nmid \Size{\Aut(G)}$. If $q \mid p - 1$, there are
exactly  $q-1$  such morphisms  with  kernel  of  order $p$,  as  they
correspond to  sending a fixed element of  order $q$ of $G$  to one of
the $q-1$ elements of order $q$ of $\Aut(G)$. The corresponding
regular subgroups are non-abelian by
Lemma~\ref{lemma:inverse_and_conjugacy}, as for $b \in B$ we have
\begin{equation*}
  a^{\ominus 1} \circ b \circ a
  =
  b^{\gamma(a)}
  \ne 
  b.
\end{equation*}
Let $\beta \in \Aut(G)$ send $a$ to $a^{t}$, for some $t$. Then,
according to Lemma~\ref{lemma:conjugacy}, and since $\Aut(G)$ is
abelian, we have $\gamma^{\beta}(a) = \gamma(a^{\beta^{-1}}) =
\gamma(a)^{t^{-1}}$. It follows that all these $q - 1$ GF's are conjugate.

If $G = \cC_p \rtimes \cC_q$, one has  first to choose a $\Span{\iota(a)}$
among the $p$ Sylow $q$-subgroups of $\Aut(G)$. Since for $b \in B$
Lemma~\ref{lemma:inverse_and_conjugacy} yields
\begin{equation*}
  a^{\ominus 1} \circ b \circ a
  =
  a^{-1} b^{\gamma(a)} a
  =
  b^{\gamma(a) \iota(a)},
\end{equation*}
the choice  $\gamma(a) = \iota(a)^{-1}$  yields $p$ instances  of $(G,
\circ)          =           \cC_{p q}$.           According          to
Lemma~\ref{lemma:conjugacy}.\eqref{item:invariance-under-conjugacy},
all these  $\gamma$ are  conjugate under $\iota(B)$,  which conjugates
transitively the Sylow $q$-subgroups.

The other non-trivial  choices of $\gamma(a) =  \iota(a)^{s}$, with $s
\ne 0, -1$ yield  $p (q - 2)$ instances of  $(G, \circ) = \cC_p \rtimes
\cC_q$.   Once  more,  the  action  of  $\iota(B)$,  which  conjugates
transitively the Sylow $q$-subgroups, shows that the conjugacy classes
are of length at least $p$.  The cyclic complement of order $p - 1$ of
$\iota(B)$ in $\Aut(G)$ which contains $\gamma(a) = \iota(a^{s})$ then
centralises $a$  and $\gamma(a)$, so  that it centralises  $\gamma$ by
Lemma~\ref{lemma:conjugacy}, and the conjugacy classes have length
precisely $p$.

\subsection{An application: a result of Kohl}
\label{subs:Kohl}

In~\cite{Ko13, Ko16}, Kohl gives a method to determine the Hopf-Galois
structures on a family of Galois extensions of degree $pm$, where $p$
is a prime, 
$\gcd(m, p)=1$, and some additional hypotheses hold.

With our methods we obtain the following slight generalisation
of  the main  result~\cite[Theorem 1.3]{Ko16},  which we  have
reformulated in terms of Byott's translation. 

\begin{theorem}
  Let $G$ be a group of order $m p$, with $p \nmid m$. 
  Assume $G$ has a unique Sylow $p$-subgroup $P$. 

  Let $M \le G$  be a subgroup of order $m$. Assume  $p$ does not divide
  $\Size{\Aut(M)}$.

  Let $N$ be a regular subgroup of $\Hol(G)$. 

  Then
  \begin{enumerate} 
  \item
    $\nu(P) \le  N$, so  that $\nu(P)  $ is a  Sylow $p$-subgroup  of $N$,
    which need not be unique.
  \item
    $\nu(P) \in \Set{\rho(P), \lambda(P) } $. 
  \item
    $[\rho(G), \nu(P)] \le \nu(P) $. 
  \end{enumerate} 
\end{theorem}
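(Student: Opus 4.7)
I would treat the three conclusions in turn.

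For~(1), since $P$ is the unique Sylow $p$-subgroup of $G$, it is characteristic, hence invariant under every element of $\Aut(G)$, and in particular under $\gamma(P)$. Applying Proposition~\ref{prop:2su3} with $H=P$, one obtains $(P,\circ)\le(G,\circ)$, so that $\nu(P)\le\nu(G)=N$; since $\Size{\nu(P)}=p$ equals the $p$-part of $\Size{N}=mp$, the subgroup $\nu(P)$ is Sylow in $N$.

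The heart of the argument is~(2). The plan is first to show that the only $p$-elements of $\Aut(G)$ lie in $\iota(P)$. Any such $\alpha$ fixes $P$ pointwise, because $p\nmid\Size{\Aut(P)}=p-1$. Since $\alpha$ permutes the complements to $P$ in $G$, which form a single $P$-conjugacy class by Schur--Zassenhaus, there exists $p_{0}\in P$ with $\alpha(M)=M^{p_{0}}$. Setting $\beta=\iota(p_{0})^{-1}\alpha$, one checks that $\beta$ fixes $P$ pointwise and $M$ setwise, so $\beta$ is determined by $\beta_{\restriction M}\in\Aut(M)$, which has order coprime to $p$; since $\iota(p_{0})$ commutes with $\beta$ (as $\beta$ fixes $p_{0}$), the decomposition $\alpha=\iota(p_{0})\beta$ of a $p$-element into commuting factors of coprime orders forces $\beta=1$, and hence $\alpha\in\iota(P)$. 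Thus $\gamma(P)\le\iota(P)$. If $\gamma(P)=\Set{1}$, then $\nu(c)=\rho(c)$ for every $c\in P$ and $\nu(P)=\rho(P)$. Otherwise $\iota(P)\ne\Set{1}$, which forces $P\cap Z(G)=\Set{1}$, so Proposition~\ref{prop:duality} applies with $r=p$, $C=P$, and any $a\in M$ acting non-trivially on $P$ (its order divides $m$, hence is not a power of $p$); the conclusion is $\gamma(c)=\iota(c^{-1})$ for all $c\in P$, giving $\nu(c)=\iota(c^{-1})\rho(c)=\lambda(c)$ and $\nu(P)=\lambda(P)$.

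For~(3), if $\nu(P)=\rho(P)$ then $[\rho(G),\rho(P)]=\rho([G,P])\le\rho(P)=\nu(P)$ because $P\norm G$; if instead $\nu(P)=\lambda(P)$, the images of the right and left regular representations of $G$ commute elementwise, so $[\rho(G),\lambda(P)]=\Set{1}\le\lambda(P)$. The main obstacle in the whole proof is the structural analysis of the Sylow $p$-subgroup of $\Aut(G)$ reducing it to $\iota(P)$, which is where the hypothesis $p\nmid\Size{\Aut(M)}$ is used; once this is in place, Proposition~\ref{prop:duality} together with the elementary commutativity of $\rho(G)$ and $\lambda(G)$ finishes the argument.
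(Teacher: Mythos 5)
Your argument is correct, and for parts (1) and (3) it coincides with the paper's: $P$ is characteristic, so Proposition~\ref{prop:2su3} gives $(P,\circ)\le(G,\circ)$ and $\nu(P)\le N$, and (3) follows from $[\rho(G),\rho(P)]=\rho([G,P])\le\rho(P)$, respectively from the fact that $\rho(G)$ centralises $\lambda(G)$. The genuine difference lies in the key step of (2), namely the identification of the $p$-elements of $\Aut(G)$ with $\iota(P)$. The paper follows Kohl's original route (\cite[Lemma 1.1]{Ko13}, \cite[Lemma 1.2]{Ko16}): it first separates the case $[P,M]=1$, where $\Aut(G)=\Aut(P)\times\Aut(M)$ has no $p$-elements, and otherwise invokes Curran's theorem~\cite{Curran} to describe the $p$-elements of $\Aut(G)$ by maps $\beta\colon M\to P$ with $\beta(yz)=\beta(z)\beta(y)^{z}$, showing that these give exactly $\iota(P)$. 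You replace this with a uniform, self-contained argument: a $p$-element $\alpha$ is trivial on $P$ because $p\nmid\Size{\Aut(P)}=p-1$, it sends the complement $M$ to a $P$-conjugate by the (solvable, hence elementary) conjugacy part of Schur--Zassenhaus, and correcting by $\iota(p_{0})$ yields a commuting factorisation $\alpha=\iota(p_{0})\beta$ with $\ord(\beta)$ coprime to $p$ --- this is where the hypothesis $p\nmid\Size{\Aut(M)}$ enters --- so $\beta=1$ and $\alpha\in\iota(P)$. This is more elementary (no external citation, no case split); the paper's version has the advantage of exhibiting the explicit cocycle description of the $p$-part of $\Aut(G)$. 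One line you should add before ``thus $\gamma(P)\le\iota(P)$'': the elements of $\gamma(P)$ really are $p$-elements, because by your part (1) the set $P$ is a subgroup of $(G,\circ)$ of order $p$ and $\gamma\colon(G,\circ)\to\Aut(G)$ is a morphism, so $\gamma(P)$ is a subgroup of $\Aut(G)$ of order $1$ or $p$; the paper obtains the same fact from Proposition~\ref{prop:images_gamma}. With that sentence in place, your application of Proposition~\ref{prop:duality} (with $C=P$, $P\cap Z(G)=\Set{1}$ forced by $\iota(P)\ne\Set{1}$, and $a\in M$ acting non-trivially on $P$, so of order not a power of $p$) and the computation $\iota(c^{-1})\rho(c)=\lambda(c)$ agree with the paper's conclusion $\nu(P)\in\Set{\rho(P),\lambda(P)}$.
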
 

Note that the  subgroup $M$ as in the statement  of the Theorem exists
by the theorem of Schur-Zassenhaus.

\begin{proof} 
  Since $P = \Span{x}$ is characteristic in $G$, it is also a subgroup
  of $(G, \circ) $,  so that $\nu(P) \le N$, as $\nu  : (G, \circ) \to
  N$ is a morphism.

  Proposition~\ref{prop:images_gamma} yields  that $\gamma(x) $  is an
  element of $\Aut(G) $ of order dividing $p$.

  Suppose first $[P, M] = 1$. Then  there are no elements of order $p$
  in $\Aut(G) = \Aut(P) \times \Aut(M)$, so that $\gamma(x) = 1$. It
  follows  that   $\nu(P)  =   \rho(P)  $  in   this  case,   so  that
  $[\rho(G), \nu(P) ] = [\rho(G), \rho(P)] = \rho([G, P]) = 1$.

  Let thus  $[P, M] \ne  1$, so that $\iota(P)  $, the group  of inner
  automorphisms of $G$ induced by  conjugation by the elements of $P$,
  has order $p$.

  We  now go  through  the arguments  of~\cite[Lemma 1.1]{Ko13}  ~and
  \cite[Lemma 1.2]{Ko16}. According to~\cite[Theorem 1]{Curran} since
  $p$  does  not  divide  $\Size{\Aut(P)}$  and  $\Size{\Aut(M)  }  $,  the
  $p$-elements of $\Aut(G) $ are in one-to-one correspondence with the
  maps $\beta : M \to P$ such that
  \begin{equation} 
    \label{eq:beta2} 
    \beta(y z) = \beta(z) \beta(y)^{z}.
  \end{equation} 
  (Among these  maps, one  finds the maps  corresponding to  the inner
  automorphisms $\iota(P) $, in the form $\beta(y) = [x^{i}, y] $, for
  $i  =  0,  1,  \dots,  p-1$.)   According  to~\eqref{eq:beta2},  the
  restriction of $\beta$  to the centraliser $C_{M}(P)$ of  $P$ in $M$
  yields a morphism $C_{M} (P) \to P$; since $\gcd(\Size{M}, \Size{P})
  = 1$, we obtain  that $\beta$ is trivial on $C_{M}  (P)$. Since $M /
  C_{M}  (P)$  is  isomorphic  to  a  subgroup  of  the  cyclic  group
  $\Aut(P)$,  \eqref{eq:beta2}   shows  that  $\beta$   is  completely
  determined by its  value $\beta(g) $ on an element  $g$, whose image
  generates  $M /  C_{M}  (P)$.  There are  at  most  $p$ choices  for
  $\beta(g) \in P$. It follows that  the automorphisms of $G$ of order
  $p$ are inner, induced by conjugation by elements of $P$.

  Proposition~\ref{prop:duality}   then  yields   that
  $\gamma(x)   =  \iota(x^{-\sigma}) $, where
  \begin{itemize} 
  \item
    either $\sigma =  0$, so that $\nu(P) = \rho(P)  $, and thus
    \begin{equation*}
      [\rho(G),  \nu(P) ]
      =
      [\rho(G), \rho(P)]
      =
      \rho([G, P])
      =
      \rho(P)
      =
      \nu(P),
    \end{equation*}
  \item
    or $\sigma = 1$, and thus $\nu(P) = \lambda(P) $ is centralised by
    $\rho(G) $.  
  \end{itemize} 
\end{proof} 

\section{The groups of order $p^2q$ and their automorphism groups}
\label{sec:the-groups}

In this  section we  prepare the field for the proof of Theorem~\ref{number},
which will take place in the next section. Although in this paper we
deal  with those groups of order $p^{2}
q$, where $p$ and $q$ are distinct primes, which have cyclic Sylow
$p$-subgroups, some of our results will be stated for the general case.

We will first recall the classification of the groups of order $p^{2}
q$ with cyclic Sylow $p$-subgroups, where $p$ and $q$ are distinct
primes, and of their automorphism 
groups. We will then show  how to  apply the  duality results of
Subsection~\ref{ssec:duality} to the non-abelian groups of order
$p^{2} q$ with a cyclic Sylow $p$-subgroup. Finally we will show that
for odd $p$, if $G$ is a group of order $p^{2} q$, a Sylow $p$-subgroup of
a regular subgroup of $\Hol(G)$ is isomorphic to a Sylow $p$-subgroup
of $G$.

The classification  of groups  of order  $p^{2} q$,  where $p,  q$ are
distinct  primes,  goes   back  to  O.~H\"older~\cite{Holder1893}.  In
particular, H\"older  showed that in  such a  group there is  always a
normal Sylow  subgroup.  As a  handy reference, we have  recorded the
classification  of  these  groups  and of  their  automorphism  groups
in~\cite{classp2q}. We list in the table below those groups of order
$p^2q$ with a cyclic Sylow $p$-subgroup, referring to~\cite{classp2q}
for the details. Here each type corresponds to an isomorphism class. 
We use the notation $\cC_{n}$ for a cyclic
  group of order $n$.

\begin{description}
\item[Type 1] Cyclic group.
 \item[Type 2] This  is the  non-abelian group  with centre  of order
  $p$  for  $p \mid  q  -  1$,  which  we denote  by  $\cC_{p^{2}}
  \ltimes_{p} \cC_{q}$.
\item[Type 3] This is the non-abelian group with trivial centre for
  $p^{2} \mid q - 1$, which we denote by $\cC_{p^{2}} \ltimes_{1}
  \cC_{q}$.
\item[Type 4] This is the non-abelian group for $q \mid p - 1$, which
  we denote by $\cC_{p^{2}} \rtimes \cC_{q}$.
\end{description}

\begin{center}
  \begin{table}[ht!]
    \caption{Groups of order $p^2q$ with cyclic Sylow $p$-subgroups
      and their automorphisms} 
    \label{table:grp_aut}
    \begin{tabular}{c|c|c|c}
      Type & Conditions & $G$ & $\Aut(G)$ \\
      \hline\hline
      1 & & $\cC_{p^{2}} \times \cC_{q}$ & $\cC_{p (p - 1)} 
      \times \cC_{q-1}$
      \\\hline
      2 & $p \mid q - 1$ & $\cC_{p^{2}} \ltimes_{p} \cC_{q}$ &
      $\cC_{p} \times \Hol(\cC_{q})$ 
      \\\hline
      3 & $p^{2} \mid q - 1$ & $\cC_{p^{2}} \ltimes_{1} \cC_{q}$ &
      $\Hol(\cC_{q})$
      \\\hline 
      4 & $q \mid p - 1$ & $\cC_{p^{2}} \rtimes \cC_{q}$  &
      $\Hol(\cC_{p^{2}})$
      \\
    \end{tabular}
  \end{table}
\end{center}

The next Propositions apply to the groups in
Table~\ref{table:grp_aut}, and also to the other groups of order
$p^{2} q$. For the types of such groups not dealt with in this paper,
we refer to the notation of~\cite{classp2q}.
\begin{proposition}
  \label{prop:duality_p2q}
  Let $G$ be a non-abelian group of  order $p^2q$, and let $\gamma$ be a
  GF on  $G$. 
  \begin{enumerate}
  \item 
    Let $B$ be the  normal Sylow $r$-subgroup  of $G$
    ($r \in \Set{p, q}$);
  \item if $r=p$, assume $B$ cyclic;
  \item denote by $C$ the unique subgroup of $B$ of order $r$.
  \end{enumerate}
  Then
  \begin{equation*}
    C \le \ker(\gamma)   
    \text{   if  and   only   if   }   
    C \nleq \ker(\tilde\gamma).
  \end{equation*}
  Moreover,  for each group $\mathcal  G$ of order
  $p^2q$, let
  \begin{equation*}
  k_r(\mathcal G)
  = 
  \Size{
    \Set{ 
      \gamma \text{ GF on $G$}
      : 
      C\le \ker(\gamma) \text{ and } (G, \circ) \cong \mathcal{G} 
    }
  }.
  \end{equation*}
  Then
  \begin{equation*}
    e'(\mathcal G, G)
    =
    \Size{
      \Set{
        \gamma\text{ GF on $G$}
        :
        (G, \circ) \cong \mathcal G
      }
    }
    =
    2 k_r(\mathcal{G}).
  \end{equation*}
\end{proposition}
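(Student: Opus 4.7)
The plan is to deduce the proposition from Corollary~\ref{cor:duality}, which requires verifying its hypotheses and, crucially, that every GF $\gamma$ on $G$ satisfies $\gamma(c) \in \iota(C)$ for $c \in C$. The hypotheses on $C$ imposed by Proposition~\ref{prop:duality} are straightforward: $C$ is cyclic of prime order $r$; since $B$ is normal and the unique Sylow $r$-subgroup of its order, $B$ is characteristic in $G$, and so is the characteristic subgroup $C$ of the cyclic $B$; a case-by-case inspection via Table~\ref{table:grp_aut} yields $C \cap Z(G) = \{1\}$ and produces an element $a \in G$ inducing on $C$ an automorphism whose order is not a power of $r$ (for example, in Type~4 take $a$ a generator of the Sylow $q$-subgroup, noting that an automorphism of $\cC_{p^{2}}$ of order $q \mid p - 1$ necessarily restricts non-trivially to $C$).

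The technical core is to show $\gamma(c) \in \iota(C)$. Since $C$ is characteristic it is $\gamma(C)$-invariant, and Proposition~\ref{prop:2su3} yields $(C, \circ) \le (G, \circ)$; then $\gamma$ restricts to a morphism $(C, \circ) \to \Aut(G)$ whose image has order dividing $\Size{C} = r$. When $r = q$ (Types~2 and~3), Table~\ref{table:grp_aut} shows that $\Aut(G)$ has a unique Sylow $q$-subgroup of order $q$, which must coincide with $\iota(C)$ (of the same order, as $\iota$ is injective on $C$ since $C \cap Z(G) = \{1\}$), so $\gamma(c) \in \iota(C)$. When $r = p$ (Type~4), Corollary~\ref{cor:sylow-gen}(1) gives $(B, \circ) \cong \cC_{p^{2}}$, and Corollary~\ref{cor:same_generator} shows that $C$ coincides as a set with the unique subgroup of order $p$ of $(B, \circ)$. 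If $\Size{\gamma(B)} \le p$, then $C \le \ker(\gamma)$ and we are done. Otherwise $\gamma(B)$ is a cyclic subgroup of order $p^{2}$ in the Sylow $p$-subgroup $P$ of $\Aut(G) = \Hol(\cC_{p^{2}})$, a non-abelian group of order $p^{3}$. Each element of $P$ restricts to an element of the (unique) Sylow $p$-subgroup of $\Aut(B) \cong \cC_{p(p-1)}$, acting on $B$ as $b \mapsto b^{1+kp}$ for some $k$, and so fixes $C$ pointwise; hence $\iota(C) \le Z(P)$, and equality follows since both sides have order $p$. Because $P$ is non-abelian of order $p^{3}$, $P/Z(P)$ is elementary abelian of order $p^{2}$, so the $p$-th power of any element of order $p^{2}$ in $P$ lies in $Z(P) = \iota(C)$; consequently the unique order-$p$ subgroup of the cyclic group $\gamma(B)$ is $\iota(C)$, forcing $\gamma(C) = \iota(C)$.

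Once $\gamma(c) \in \iota(C)$ is in hand, Proposition~\ref{prop:duality} yields $\sigma = 0$ or $\sigma = 1$: in the first case $C \le \ker(\gamma)$, and in the second case $\tilde\gamma(c) = \gamma(c^{-1}) \iota(c^{-1}) = \iota(c) \iota(c^{-1}) = 1$ for every $c \in C$, giving $C \le \ker(\tilde\gamma)$. These two alternatives are mutually exclusive because $C \ne \{1\}$, which is the first assertion of the proposition. For the counting, Proposition~\ref{prop:gammatilde} shows that $\gamma \mapsto \tilde\gamma$ is an involution on the set $X = \{\gamma\text{ GF on }G : (G,\circ) \cong \mathcal{G}\}$, and by the dichotomy this involution interchanges $X_{1} = \{\gamma \in X : C \le \ker(\gamma)\}$ with $X \setminus X_{1}$. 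Therefore $e'(\mathcal{G}, G) = \Size{X} = 2 \Size{X_{1}} = 2 k_{r}(\mathcal{G})$. The main obstacle is the Type~4 case of the second paragraph: since $\Aut(G)$ contains elements of order $p$ outside $\iota(C)$, no Sylow argument alone suffices, and the essential new input is the identification $\iota(C) = Z(P)$ for the Sylow $p$-subgroup $P$ of $\Aut(G)$.
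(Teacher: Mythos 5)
Your argument is correct for the groups of Table~\ref{table:grp_aut}, and in the type~4 case it takes a genuinely different route from the paper. The paper writes $\gamma(b)=\iota(b^{-\sigma})\psi^{t}$ inside the Sylow $p$-subgroup of $\Aut(G)$, checks $b^{\circ p}=b^{p}$, and computes $\gamma(b^{p})=\gamma(b)^{p}=\iota(b^{-\sigma p})$ using the odd-prime identity $(xy)^{p}=x^{p}y^{p}$ in that subgroup; you instead identify $\iota(C)$ with the centre $Z(P)$ of the unique Sylow $p$-subgroup $P$ of $\Aut(G)$ (every element of $P$ acts on $B$ as $b\mapsto b^{1+kp}$, hence fixes $C$ pointwise, and $\iota(g)^{\alpha}=\iota(g^{\alpha})$), and then use that $P/Z(P)$ is elementary abelian, so the $p$-th power of an order-$p^{2}$ element of $P$ lies in $Z(P)=\iota(C)$; together with the fact that $\gamma$ restricted to $(B,\circ)\cong\cC_{p^{2}}$ is a morphism, this forces $\gamma(C)=\iota(C)$ (or $C\le\ker(\gamma)$ when $\Size{\gamma(B)}\le p$). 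That is a clean structural alternative which avoids the explicit power computation. Your $r=q$ argument via the unique order-$q$ Sylow $q$-subgroup of $\Aut(G)$ is essentially the paper's observation that the order-$q$ automorphisms are inner, and your deduction of the counting statement from the involution $\gamma\mapsto\tilde\gamma$ matches Corollary~\ref{cor:duality}.

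There is, however, a gap in scope. The proposition is stated for \emph{every} non-abelian group $G$ of order $p^{2}q$ with a normal Sylow $r$-subgroup ($B$ cyclic if $r=p$); for $r=q$ this includes the group with elementary abelian Sylow $p$-subgroups and normal Sylow $q$-subgroup (type~11 in the notation of~\cite{classp2q}), and the paper genuinely needs this case, since Proposition~\ref{prop:duality_p2q} is invoked for type~11 in the proof of Theorem~\ref{teo:sylow}. Your verification that $\gamma(C)\le\iota(C)$ for $r=q$ appeals only to Table~\ref{table:grp_aut}, which lists the groups with \emph{cyclic} Sylow $p$-subgroups, and you explicitly restrict to types~2 and~3, so type~11 is not covered by your case analysis. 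The paper treats types~2, 3 and~11 uniformly by noting that $\Size{\Aut(G)/\Inn(G)}$ is coprime to $q$, so every element of order $q$ of $\Aut(G)$ lies in $\Inn(G)$, whose unique Sylow $q$-subgroup is $\iota(B)=\iota(C)$; your Sylow-subgroup argument would go through once this fact (or the description of $\Aut(G)$ for type~11 from~\cite{classp2q}) is supplied for the missing type.
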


\begin{proof} 
  We   show   that   $G,B,C$    fulfil   the   the   assumptions   of
  Proposition~\ref{prop:duality} and Corollary~\ref{cor:duality}, from
  which the result will follow.

  The subgroup $B$ is cyclic and  characteristic in $G$, so $C$ is the
  only  subgroup  of  order  $r$  of  $G$,  and~\eqref{item:C-is-abelian}~and
  \eqref{item:C-is-char} of
  Proposition~\ref{prop:duality}  hold.  Let now $r =  q$; in this case $C =
  B$ and $G$ can  be of type 2, 3 or 11  and we always have
  $B   \cap   Z(G)   =  \{1\}$.    Moreover,   $\ord(\gamma(b))   \mid
  \ord_{(G,\circ)}(b)  \mid  q$   since  $(B,\circ) \le  (G,\circ)$  by
 Proposition~\ref{prop:2su3} and $\gamma\colon  (G,\circ)\to \Aut(G)$ is a
  morphism.    For    $G$   of    type  2,  3    or  11,
  $\Size{\Aut(G)/\Int(G)}$ is coprime to $q$, thus all the elements of
  order  $q$ in  $\Aut(G)$ belong  to  $\Int(G)$ and  so $\gamma(b)  =
  \iota(b^{-\sigma})$ for some $\sigma$.

  On the other hand, for $r = p$ we have $B = \Span{b}$, and $G$ is of
  type 4,  so that $p >  2$.  Here $Z(G)  = \Set{1}$, so that  we have
  only to show that for all $c  \in C = \Span{b^p}$ we have $\gamma(c)
  = \iota(c^{-\sigma})$.   According to Theorem~3.4 and  Remark~3.5 of
  ~\cite{classp2q},   the    Sylow   $p$-subgroup   of    $\Aut(G)   =
  \Hol(\cC_{p^2})$  is  a non-abelian  group  $X  = \cC_{p^2}  \rtimes
  \cC_p$ of  order $p^{3}$, spanned  by $\iota(b)$, of  order $p^{2}$,
  and  another element  $\psi$  of  order $p$  which  maps $b  \mapsto
  b^{1+p}$, and fixes  elementwise a Sylow $q$-subgroup $A$  of $G$ of
  one's choice.  The derived subgroup $\Span{\iota(b^{p})}$  of $X$ is
  central, of order $p$. We now quote the elementary
  \begin{remark}
    \label{rem:ah-the-p-powers}
    For $x, y \in X$ we have
    \begin{equation*}
      (x y)^{p} 
      = 
      x^{p} y^{p} [y, x]^{\binom{p}{2}}
      =
      x^{p} y^{p},
    \end{equation*}
    as $p$ is odd.
  \end{remark}
  We  have once  more $(B,\circ)  \le (G,\circ)$
  and   since   $p   >  2$,   by Corollary~\ref{cor:same_generator},
  $\ord_{(B,\circ)}(b^{\circ  k}) =  \ord_B(b^k)$  for  all $k$,  thus
  $\ord(\gamma(b^p)) \mid  p$.   If  $\gamma(b^p)  = 1$  we  are  done,
  otherwise    $\ord(\gamma(b^p))    =    p$.   Let    $\gamma(b)    =
  \iota(b^{-\sigma}) \psi^{t}$; then
  \begin{equation*}
    b^{\circ p} 
    = 
    b^{\sum_{i = 0}^{p-1}\gamma(b)^i} 
    = 
    b^{\sum_{i =0}^{p-1}(1+ipt)} 
    = 
    b^p,
  \end{equation*}
  so that                                    
  \begin{equation*}
    \gamma(b^p) 
    = 
    \gamma(b^{\circ p}) 
    = 
    \gamma(b)^p 
    =
    (\iota(b^{-\sigma})\psi^t)^p 
    = 
    \iota(b^{-\sigma  p}),
  \end{equation*}
  where we have used Remark~\ref{rem:ah-the-p-powers}.

  Finally, since $G$ is not  abelian, then $\Int(G)$ contains elements
  of order both $p$  and $q$, so there exists $a\in  G$ whose order is
  not   a  power   of  $r$   and  Proposition~\ref{prop:duality}   and
  Corollary~\ref{cor:duality} can be applied.
\end{proof}

\begin{theo}
  \label{teo:sylow}
  Let $G$ be a group of order $p^2q$ and $\gamma$ a GF on $G$.

  Then there exists a Sylow $p$-subgroup $A$ of $G$ which is
  $\gamma(A)$-invariant.

  In particular, for $p>2$, $G$ and $(G,\circ)$ have isomorphic Sylow
  $p$-subgroups.   
\end{theo}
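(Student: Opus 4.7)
I will case-split on whether $G$ has a unique Sylow $p$-subgroup. If it does, let $A$ denote that subgroup: being characteristic in $G$, it is invariant under every element of $\Aut(G)$, in particular under every element of $\gamma(A)$. This handles Types~1 and~4 of Table~\ref{table:grp_aut} and, more generally, any group of order $p^{2}q$ with normal Sylow $p$-subgroup.

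Otherwise, by H\"older's classification recalled at the start of the section, the Sylow $q$-subgroup $B$ of $G$ is normal; this covers Types~2 and~3 as well as their non-cyclic analogues treated in~\cite{classp2q}. Using the involution $\gamma \leftrightarrow \tilde\gamma$ of Proposition~\ref{prop:duality_p2q} (applied with $r = q$, so $C = B$), I reduce to the case $B \le \ker(\gamma)$. The swap is legitimate because, for any Sylow $p$-subgroup $A$ of $G$ and any $a \in A$,
\[
  A^{\tilde\gamma(a)}
  =
  A^{\gamma(a^{-1})\iota(a^{-1})}
  =
  \bigl(A^{\gamma(a^{-1})}\bigr)^{\iota(a^{-1})},
\]
and conjugation by $a^{-1} \in A$ preserves $A$; hence $A$ is $\tilde\gamma(A)$-invariant if and only if it is $\gamma(A)$-invariant.

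Once $B \le \ker(\gamma)$, the first isomorphism theorem applied to the morphism $\gamma \colon (G, \circ) \to \Aut(G)$ yields $|\gamma(G)| \mid |G|/|B| = p^{2}$, so $\gamma(G)$ is a $p$-subgroup of $\Aut(G)$. This $p$-group acts on the set of Sylow $p$-subgroups of $G$, whose cardinality $q$ is coprime to $p$; the standard fixed-point argument (the number of fixed points is congruent to $q \pmod p$, hence non-zero) produces at least one Sylow $p$-subgroup $A$ which is $\gamma(G)$-invariant, a fortiori $\gamma(A)$-invariant.

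For the ``in particular'' part, fix such an $A$: Proposition~\ref{prop:2su3} identifies $(A, \circ)$ as a subgroup of $(G, \circ)$ of order $p^{2}$, hence as a Sylow $p$-subgroup of $(G, \circ)$. Both $A$ and $(A, \circ)$, being of order $p^{2}$, are abelian of rank at most $2$, so Corollary~\ref{cor:sylow-gen} gives $A \cong (A, \circ)$ for every odd prime $p$ (the rank-$2$, $p = 3$ case being covered by the special clause of the corollary). The most delicate point, as I see it, is the duality reduction: one needs to confirm carefully that the existence of a Sylow $p$-subgroup $A$ with $A$ being $\gamma(A)$-invariant is preserved under $\gamma \leftrightarrow \tilde\gamma$, a check that ultimately rests on $A$ being stabilised by conjugation by its own elements.
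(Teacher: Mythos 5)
Your proposal is correct and follows essentially the same route as the paper: the case of a characteristic Sylow $p$-subgroup is immediate, the remaining case is reduced via the duality of Proposition~\ref{prop:duality_p2q} (with $r=q$) to $B \le \ker(\gamma)$, the orbit-counting argument for the action of the $p$-group $\gamma(G)$ on the $q$ Sylow $p$-subgroups produces an invariant one, and Corollary~\ref{cor:sylow-gen} (using that groups of order $p^{2}$ are abelian) yields the isomorphism. The only, harmless, variation is in the duality transfer: you check directly from $\tilde\gamma(a)=\gamma(a^{-1})\iota(a^{-1})$ that $\gamma(A)$-invariance and $\tilde\gamma(A)$-invariance of a Sylow $p$-subgroup are equivalent, whereas the paper passes through the isomorphism $\inv\colon (G,\circ)\to(G,\tilde\circ)$ and two applications of Proposition~\ref{prop:2su3}.
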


\begin{proof}
  We show that there is always a  Sylow $p$-subgroup $A$ of $G$ which is
  $\gamma(A)$-invariant; then  the result  will follow  from
  Corollary~\ref{cor:sylow-gen},  
  since the groups of order $p^2$ are abelian. Clearly,
  this is always the case when  $A$ is characteristic; otherwise the set
  ${\mathcal P}$ of the Sylow $p$-subgroups of $G$ has $q$ elements. For
  each   $\gamma$,  the   group  $\gamma(G)$   acts  on   $\mathcal  P$,
  partitioning it  into orbits whose length  divides $\Size{\gamma(G)}$.
  So, denoting by $N_l$ the number of orbits of length $l$, we have
  $$\sum_{l \mid \Size{\gamma(G)}}N_ll = \Size{\mathcal{P}} = q \equiv 1
  \bmod p.$$  If $q \mid \Size{\ker(\gamma)}$,  then $\Size{\gamma(G)} =
  1,  p$  or  $p^2$  and   necessarily  $N_1\ge1$,  namely  there  exist
  $A\in\mathcal{P}$ which is $\gamma(G)$-invariant.

  This argument covers the cases when $G$ is of type 1, 4, 5, 6, 7, 8,
  9, 10 (that is, the Sylow $p$-subgroup is characteristic) and when $G$
  is of type 2, 3 or 11, and the Sylow $q$-subgroup $B$ is contained in
  $\ker (\gamma)$. So suppose $G$ of type 2, 3 or 11 and $B \not\le
  \ker (\gamma)$; here $B$ is characteristic and by
  Proposition~\ref{prop:duality_p2q} we get that
  $B\le\ker(\tilde\gamma)$. The 
  previous argument ensures that there exists a Sylow $p$-subgroup $A$
  of $G$ which is $\tilde\gamma(G)$-invariant and, by
  Proposition~\ref{prop:2su3}, it is also a Sylow $p$-subgroup of $(G, 
  \tilde\circ)$. 

  Now, $(G, \tilde\circ)$ is isomorphic to $(G,\circ)$ via the map $\inv \colon   x\mapsto  x^{-1}$   (see  the   proof  of
  Proposition~\ref{prop:gammatilde}), thus $A^{\inv}=A$  is also a Sylow $p$-subgroup of $(G,\circ)$. Using 
 Proposition~\ref{prop:2su3} again we get that $A$ is $\gamma(A)$-invariant.
 
  We can conclude  that, for each $G$ and for  each GF $\gamma$, there
  exists  a  Sylow   $p$-subgroup  of  $G$  which  is   also  a  Sylow
  $p$-subgroup  of  $(G,\circ)$.   Corollary~\ref{cor:sylow-gen}
  allows us to conclude  that $A$ and $(A,\circ)$  are isomorphic.
\end{proof}
We immediately get
\begin{corollary}
\label{cor:t33}
Let  $p>2$ and $q$ be distinct  primes. Let $\Gamma$ and $G$ be groups of order $p^2q$  with non isomorphic Sylow $p$-subgroups. Then $e'(\Gamma, G)=e(\Gamma,G)=0$.
\end{corollary}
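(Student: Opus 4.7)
The plan is to deduce this corollary directly from Theorem~\ref{teo:sylow} together with the correspondence in Theorem~\ref{thm:gamma-for-regular} and the formula in Theorem~\ref{th:byott96}. First, I would recall that by Theorem~\ref{thm:gamma-for-regular}, $e'(\Gamma, G)$ equals the number of GFs $\gamma$ on $G$ such that $(G, \circ) \cong \Gamma$, where $\circ$ is the operation $g \circ h = g^{\gamma(h)} h$ associated to $\gamma$. So the corollary amounts to showing that for any GF $\gamma$ on $G$, the group $(G, \circ)$ cannot be isomorphic to $\Gamma$ when the Sylow $p$-subgroups of $G$ and $\Gamma$ are non-isomorphic.

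Next, I would apply Theorem~\ref{teo:sylow}, which, for $p > 2$, gives exactly this: for every GF $\gamma$ on $G$, the Sylow $p$-subgroups of $(G, \circ)$ are isomorphic to those of $G$. Hence if $\Gamma$ has a Sylow $p$-subgroup not isomorphic to that of $G$, no GF on $G$ can satisfy $(G, \circ) \cong \Gamma$, so $e'(\Gamma, G) = 0$.

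Finally, for the statement about $e(\Gamma, G)$, I would invoke Theorem~\ref{th:byott96}, which gives the relation
\begin{equation*}
e(\Gamma, G) = \frac{\Size{\Aut(\Gamma)}}{\Size{\Aut(G)}} \, e'(\Gamma, G),
\end{equation*}
so $e'(\Gamma, G) = 0$ immediately yields $e(\Gamma, G) = 0$.

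There is essentially no obstacle here: the entire content sits in Theorem~\ref{teo:sylow}, which has already been established. The only thing to be careful about is correctly quoting the correspondence between regular subgroups, gamma functions, and skew brace operations so that the equality $e'(\Gamma, G) = \Size{\{\gamma \text{ GF on } G : (G,\circ) \cong \Gamma\}}$ is transparent before applying the Sylow invariance result.
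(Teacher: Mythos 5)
Your proof is correct and matches the paper's intended argument: the corollary is stated there as an immediate consequence of Theorem~\ref{teo:sylow}, combined with the identification of $e'(\Gamma,G)$ with the count of gamma functions giving $(G,\circ)\cong\Gamma$ and the formula of Theorem~\ref{th:byott96} to pass from $e'$ to $e$. Nothing is missing.
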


\begin{remark}
  If $G$ is a group of order $p^2q$ with cyclic Sylow $p$-subgroups,
  then either $G$ has a unique Sylow $q$-subgroup or it is of type 4. In
  the latter case it follows from Subsection~\ref{subsec:G4} that there
  exists a unique Sylow $q$-subgroup $Q$ invariant under
  $\gamma(Q)$. So, a posteriori, Theorem~\ref{teo:sylow} is also true
  replacing Sylow $p$-subgroups with Sylow $q$-subgroups, but we do not
  have a general argument to prove it. 
\end{remark}

\section{Proof of Theorems~\ref{number}~and \ref{number_sb}}
\label{sec:proof}

We are now ready to prove our main Theorem~\ref{number} We will mostly
rely on the general results established in Sections~2~and 3, appealing
occasionally to ad hoc arguments.

In  enumerating the  GF's $\gamma$  on  $G$, we  will usually  tacitly
ignore the case $\gamma(G) = \Set{1}$, that is, $\ker(\gamma) = G$, as
it  corresponds   to  the   (trivial)  case   of  the   right  regular
representation.

According  to Theorem  \ref{teo:sylow}, if  $G$  is a  group of  order
$p^2q$,  with  $p,  q$  distinct   primes  and  $p>2$,  then  $G$  and
$(G,\circ)$  have  isomorphic  Sylow   $p$-subgroups.  Thus  to  prove
Theorem~\ref{number} we only  need to consider groups  $G$ with cyclic
Sylow $p$-subgroups, that is, those in Table~\ref{table:grp_aut}.  For
these groups each  type corresponds to an isomorphism  class.  We will
proceed by analysing the types/isomorphism classes under consideration
one by one.

We fix the following notation: $q$  and $p>2$ are distinct primes, $G$
is  a group  of  order  $p^2q$ with  cyclic  Sylow $p$-subgroups,  and
$\gamma\colon G \to \Aut(G)$ is a GF on $G$.

\begin{remark} 
  \label{Argument} 
  We discuss here a recurring  pattern which occurs in the application
  of Proposition~\ref{prop:1.4}.  Let $\Set{r,s}  = \Set{p,q}$ and let
  $A$ be  a Sylow  $r$-subgroup, and  $B$ be  a Sylow  $s$-subgroup of
  $G$. Clearly $G = A B$ and we  know that at least one of $A$ and $B$
  is characteristic.   In the following  we restrict our  attention to
  the  GF's   $\gamma  \colon  G   \to  \Aut(G)$  such  that   $B  \le
  \ker(\gamma)$.

  Suppose first $A$  is characteristic.  Then $\gamma$  is the lifting
  of $\gamma' = \gamma_{\restriction A}\colon  A \to \Aut(G)$.  On the
  other hand,  by Proposition~\ref{prop:1.4},  in this case  the RGF's
  $\gamma'  \colon A  \to  \Aut(G)$ which  can be  lifted  to $G$  are
  exactly those  for which  $B$ is  invariant under  $\Set{ \gamma'(a)
    \iota(a) \colon a \in A }$.
  
  If $A$ is not characteristic in  $G$, and thus $B$ is, the situation
  is slightly more involved.

  Consider the action  of $\gamma(G)$ on the set  $\mathcal{R}$ of the
  Sylow  $r$-subgroups of  $G$.  Since  by assumption  $\gamma(G)$ has
  order a  power of $r$,  Sylow's theorems imply that  $\gamma(G)$ has
  $N_{1}  >  0$  fixed  points  in  this  action.   Let  $\bar{A}  \in
  \mathcal{R}$ be  one of these  Sylow $r$-subgroups of  $G$ invariant
  under  $\gamma(G)$.   Then   $\gamma_{\restriction  \bar{A}}  \colon
  \bar{A} \to  \Aut(G)$ is  a RGF.   On the other  hand, since  $B$ is
  characteristic,  it is  invariant under  $\Set{ \gamma'(a)  \iota(a)
    \colon a \in  A } \le \Aut(G)$, and thus  each RGF $\gamma' \colon
  \bar{A} \to \Aut(G)$ can be lifted to  a GF on $G$.  It follows that
  when  $A$  is   not  characteristic,  each  $\gamma$   with  $B  \le
  \ker(\gamma)$ can be obtained as a lifting of a $\gamma'$ defined on
  a Sylow $r$-subgroup in $N_1$  ways, one for each Sylow $r$-subgroup
  $\bar{A}$ which is invariant under $\gamma(G)$.
\end{remark}

\subsection{G of type 1}
\label{subsec:G1}

In this case  $G= \cC_{p^2}\times \cC_q$; the Sylow  $p$-subgroup $A =
\Span{a}$ and  the Sylow $q$-subgroup  $B = \Span{b}$ are  both cyclic
and characteristic.   By Proposition~\ref{prop:2su3}  $A$ and  $B$ are
also subgroups of $(G, \circ)$,  for each operation $\circ$ induced on
$G$  by  $\gamma$.   Moreover,  $  \Aut(G)=\Aut(A)\times\Aut(B)  \cong
\cC_{p(p-1)}\times\cC_{q-1} $ is abelian.

\subsubsection{The case $B \le \ker(\gamma)$} 
\label{subs:B-in-ker}

This occurs  in particular  when $q \nmid  p - 1$,  that is,  $q \nmid
\Size{\Aut(G)}$.

Since    $A$   and    $B$    are   both    characteristic   in    $G$,
Remark~\ref{Argument} ensures that  the GF's on $G$  are in one-to-one
correspondence with the RGF's
\begin{equation*}
  \gamma' \colon A \to \Aut(G) .
\end{equation*}
On     the    other     hand,    since     $A$    is     cyclic,    by
Proposition~\ref{prop:images_gamma} each $\gamma'$ is uniquely defined
by    assigning    the    image    of    the    generator    $a$    as
$\gamma'(a)=\eta\in\Aut(G)$ where  $\ord(\eta) \mid  p^2$ and  $\eta =
(\eta_{\restriction A},\eta_{\restriction B})$.

For such  a $\gamma'$, the unique  $\gamma$ induced on $G$  defines an
operation       $\circ$       for      which,       according       to
Lemma~\ref{lemma:inverse_and_conjugacy},
\begin{equation}
  \label{eq:must-be-type-10}
  a^{\ominus  1}  \circ  b  \circ  a  =  a^{-\gamma(a)^{-1}  \gamma(b)
    \gamma(a)} b^{\gamma(a)} a = b^\eta.
\end{equation}

Thus if  $\eta_{\restriction B}  = 1$ (which  is the  only possibility
when $p \nmid q-  1$), then $(G, \circ)$ is abelian,  that is, of type
1:  there  are  $p$  choices  of  $\eta  \in  \Aut(G)$,  that  is,  of
$\eta_{\restriction A} \in  \Aut(A)$ of order dividing  $p$, with this
property.

As to the conjugacy classes, let
\begin{equation*}
  \gamma(a) :
  \begin{cases}
    a \mapsto a^{1 + p h}\\ b \mapsto b
  \end{cases}
\end{equation*}
and    consider    an    automorphism     of    $G$,    defined    for
$\gcd(u,p)=\gcd(v,q)=1$,
\begin{equation}
  \label{eq:beta4}
  \beta :
  \begin{cases}
    a \mapsto a^{u}\\ b \mapsto b^{v}.
  \end{cases}
\end{equation}
Then
\begin{multline*}
  \gamma^{\beta}(a)  = \gamma(a^{\beta^{-1}})  = \gamma(a^{u^{-1}})  =
  \gamma(a^{\circ \T(u^{-1})}) =\\ = \gamma(a)^{\T(u^{-1})} :
  \begin{cases}
    a \mapsto a^{1 + p h \T(u^{-1})}\\ b \mapsto b
  \end{cases}
\end{multline*}
Here   $\T$    is   the    inverse   of    the   function    $\s$   of
Lemma~\ref{lemma:arithmetic2}.  So  if $h =  0$ we have  the conjugacy
class of length $1$ of $\rho(G)$, whereas if $h \ne 0$, the stabiliser
is  given by  $\T(u^{-1})  \equiv  1 \pmod{p}$  and  any  $v$, so  the
stabiliser has order  $p (q - 1)$,  and there is a  conjugacy class of
length $p -1$.

If $p  \mid q-1$,  we can also  choose $\ord(\eta_{\restriction  B}) =
p$. There  are $p -  1$ choices  for such an  $\eta_{\restriction B}$,
which  paired with  the  $p$ choices  for  $\eta_{\restriction A}  \in
\Aut(A)$ of order  dividing $p$ yield $p (p-1)$ choices  for $\eta \in
\Aut(G)$.    In   this  case~\eqref{eq:must-be-type-10}   shows   that
$(G,\circ)$ is of type 2.

As to the conjugacy classes, if
\begin{equation}
  \label{eq:an-auto}
  \gamma(a) :
  \begin{cases}
    a \mapsto a^{1 + p h}\\ b \mapsto b^{r}
  \end{cases},
\end{equation}
with $r$ of order $p$, then
\begin{equation*}
  \gamma^{\beta}(a) =
  \begin{cases}
    a \mapsto a^{1 + p h \T(u^{-1})}\\ b \mapsto b^{r^{\T(u^{-1})}}
  \end{cases},
\end{equation*}
so the stabiliser is the same as  in the previous case, and we get $p$
classes of length $p-1$.

If $p^{2} \mid q-1$, we can also choose $\ord(\eta_{\restriction B}) =
p^2$.  As above, in this case there are $p^{2} (p-1)$ choices of $\eta
\in \Aut(G)$  with this property,  and~\eqref{eq:must-be-type-10} show
that $(G,\circ)$ is of type 3.

As to the  conjugacy classes, this time  $r$ in~\eqref{eq:an-auto} has
period $p^{2}$,  so for  the stabiliser we  need $\T(u^{-1})  \equiv 1
\pmod{p^{2}}$, that is, $u = 1$. Therefore the stabiliser has order $q
- 1$, and we get $p$ classes of length $p (p-1)$

\subsubsection{The case $B\not\le \ker(\gamma)$} 

Here $q  \mid p-1$, so that  $(G,\circ)$ can only  be of type 1  or 4.
Moreover,    $p   \mmid    \Size{\Aut(G)}$,    so    that   $p    \mid
\Size{\ker(\gamma)}$.

If $A \le \ker(\gamma)$, since $B$ is the unique Sylow $q$-subgroup of
$G$,  Remark~\ref{Argument}  yields  that  the  GF's  on  $G$  are  in
one-to-one  correspondence  with  the  RGF's  $\gamma'  \colon  B  \to
\Aut(G)$.  In  turn,  the  latter   are  uniquely  determined  by  the
assignment $b  \mapsto \gamma'(b)$,  where $\ord(\gamma'(b))  \mid q$.
Note that all  such $\gamma'$ are morphisms: this  follows either from
Corollary~\ref{cor:ordp}, or from Lemma~\ref{Lemma:gamma_morfismi}, as
$\gamma(G)$, of order $q$, acts trivially  on the group $G/A$ of order
$q$, so that  $[G, \gamma(G)] \le A \le \ker(\gamma)$.   For each such
$\gamma'$, the  unique $\gamma$  induced on  $G$ defines  an operation
$\circ$ such that:
\begin{equation*}
  b^{\ominus 1} \circ a\circ b = b^{-1}a^{\gamma(b)}b = a^{\gamma(b)}.
\end{equation*}

Since $B\not\le \ker(\gamma)$, then $\ord(\gamma'(b)) = q$.  There are
$q-1$   choices   for  such   a   $\gamma'(b)   \in  \Aut(G)$).   Here
$a^{\gamma(b)} \ne a$, and thus $(G, \circ)$ is of type 4.

As to the conjugacy classes, here
\begin{equation*}
  \gamma(b) :
  \begin{cases}
    a \mapsto a^{t}\\ b \mapsto b
  \end{cases}
\end{equation*}
with $t$ of order $q$ modulo $p^2$. With $\beta$ as in~\eqref{eq:beta4},
we have
\begin{equation*}
  \gamma^{\beta}(b)  = \gamma(b^{\beta^{-1}})  = \gamma(b^{v^{-1}})  =
  \gamma(b)^{v^{-1}} :
  \begin{cases}
    a \mapsto a^{t^{v^{-1}}}\\ b \mapsto b
  \end{cases},
\end{equation*}
which coincides with $\gamma$ if $v = 1$. Therefore the stabiliser has
order $p (p -1)$,  and we get a single conjugacy class  of length $q -
1$.

If $A \nleq \ker(\gamma)$, we show that $\gamma(b) = 1$, contradicting
$B \not \le \ker (\gamma)$.  The group $B$ is characteristic, hence by
Proposition~\ref{prop:2su3} it is also  a subgroup of $(G,\circ)$, and
by Corollary~\ref{cor:same_generator} $\ord_{(G,\circ)}(b)=q$, so that
$\ord(\gamma(b)) \mid q$.  In  particular, $b^{\gamma(b)} = b$.  Since
$\Size{\ker(\gamma)} = p q$ or $q$ in this case, and a group of type 4
has no  normal subgroup of this  order $(G,\circ)$ is abelian.   So we
have
\begin{equation*}
  b = a^{\ominus 1} \circ b \circ a = a^{-\gamma(b)} b^{\gamma(a)} a =
  a^{-\gamma(b)} a b^{\gamma(a)},
\end{equation*}
which gives $a^{\gamma(b)} = a$, so that $\gamma(b) = 1$ as claimed.

We summarise, including the right regular representations.
\begin{prop}
  \label{prop:G1}
  Let $G$ be of type 1, that is cyclic.
  
  Then in $\Hol(G)$ there are:
  \begin{enumerate}
  \item  $p$  regular  subgroups  of  type 1,  which  split  in  one
    conjugacy class of length $1$, and one conjugacy class of length
    $p - 1$.
  \item if $p \mid q-1$,
    \begin{enumerate}
    \item $p  (p-1)$ regular subgroups of  type 2, which split  in $p$
      conjugacy classes of length $p - 1$;
    \item $p^2(p-1)$  further regular subgroups  of type 3,  if $p^{2}
      \mid q-1$, which split in $p$  conjugacy classes of length $p (p
      - 1)$.
    \end{enumerate}
  \item if $q \mid p-1$,
    \begin{enumerate}
    \item  $q-1$ regular  subgroups of  type  4, which  form a  single
      conjugacy class.
    \end{enumerate}
  \end{enumerate}
\end{prop}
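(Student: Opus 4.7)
The plan is to enumerate all GFs $\gamma \colon G \to \Aut(G)$ on $G = \Span{a} \times \Span{b} \cong \cC_{p^2} \times \cC_q$ by splitting into the two natural cases ($B \le \ker(\gamma)$ or not), sorting each GF by the isomorphism type of $(G, \circ)$, and then applying Lemma~\ref{lemma:conjugacy} to partition each type into $\Aut(G)$-conjugacy classes. Two simplifying features drive the approach: both Sylow subgroups $A = \Span{a}$ and $B = \Span{b}$ are characteristic in $G$, so they are automatically $\gamma$-invariant and Proposition~\ref{prop:1.4} applies in both directions; and $\Aut(G) = \Aut(A) \times \Aut(B)$ is abelian, so the conjugation action on GFs is particularly transparent.

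In the case $B \le \ker(\gamma)$, Remark~\ref{Argument} together with Proposition~\ref{prop:images_gamma} reduce each GF to the choice of a single $\eta = \gamma(a) \in \Aut(G)$ with $\ord(\eta) \mid p^2$. Writing $\eta = (\eta_{\restriction A}, \eta_{\restriction B})$ and applying Lemma~\ref{lemma:inverse_and_conjugacy} gives $a^{\ominus 1} \circ b \circ a = b^{\eta_{\restriction B}}$, so the isomorphism type of $(G, \circ)$ is detected by the order of $\eta_{\restriction B}$: order $1$ gives type~1, order $p$ gives type~2 (requiring $p \mid q-1$), and order $p^2$ gives type~3 (requiring $p^2 \mid q-1$). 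A direct count of admissible $\eta$'s yields $p$, $p(p-1)$ and $p^2(p-1)$ GFs in these three subcases. In the complementary case $B \nleq \ker(\gamma)$, necessarily $q \mid p-1$, and since $p \mmid \Size{\Aut(G)}$ we must have $p \mid \Size{\ker(\gamma)}$. The subcase $A \le \ker(\gamma)$ is handled by the dual application of Proposition~\ref{prop:images_gamma} to $B$, producing $q-1$ GFs which are all of type~4 by another invocation of Lemma~\ref{lemma:inverse_and_conjugacy}. The subcase $A \nleq \ker(\gamma)$ must be excluded: Corollary~\ref{cor:same_generator} forces $\ord(\gamma(b)) \mid q$, hence $b^{\gamma(b)} = b$; since a group of type~4 has no normal subgroup of order $pq$ or $q$, this forces $(G, \circ)$ to be abelian, and a short calculation then yields $\gamma(b) = 1$, a contradiction.

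For the conjugacy classes, abelianness of $\Aut(G)$ reduces the formula of Lemma~\ref{lemma:conjugacy} to $\gamma^\beta(g) = \gamma(g^{\beta^{-1}})$. Choosing a generic $\beta \colon a \mapsto a^u,\ b \mapsto b^v$ and using the identity $a^{u^{-1}} = a^{\circ \T(u^{-1})}$ (where $\T$ inverts the function $\s$ from Lemma~\ref{lemma:arithmetic2}), one computes $\gamma^\beta(a) = \gamma(a)^{\T(u^{-1})}$, so the stabilizer in each sub-case reduces to a congruence on $u$ or, in the type~4 sub-case, on $v$. This yields: the right regular representation as a class of length $1$; the remaining type~1 GFs stabilized by $\T(u^{-1}) \equiv 1 \pmod p$, forming one class of length $p-1$; the type~2 GFs stabilized by the same congruence but with a nontrivial $\eta_{\restriction B}$, giving $p$ classes of length $p-1$; the type~3 GFs requiring $\T(u^{-1}) \equiv 1 \pmod{p^2}$, giving $p$ classes of length $p(p-1)$; and the type~4 GFs stabilized by $v = 1$, giving a single class of length $q-1$. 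The main obstacle is mostly bookkeeping: keeping straight, across the branches on $\ord(\eta_{\restriction B})$, which modular congruence on $u$ governs the stabilizer. The one genuinely structural step, rather than a mechanical count, is ruling out the subcase where neither $A$ nor $B$ lies in $\ker(\gamma)$.
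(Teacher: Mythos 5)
Your proposal is correct and essentially reproduces the paper's own proof: the same split on whether $B \le \ker(\gamma)$, the same reduction of each GF to the single value $\gamma(a)$ (resp.\ $\gamma(b)$) via Remark~\ref{Argument} and Proposition~\ref{prop:images_gamma}, the same use of $a^{\ominus 1} \circ b \circ a$ to read off the type of $(G,\circ)$, the same exclusion of the subcase in which neither Sylow subgroup lies in $\ker(\gamma)$, and the same stabiliser computations with $\beta\colon a \mapsto a^{u},\ b \mapsto b^{v}$ and the function $\T$ for the conjugacy classes. One shared imprecision is worth noting: in the excluded subcase one actually has $\Size{\ker(\gamma)} = p$ (not $pq$ or $q$), and a group of type 4 \emph{does} have a normal subgroup of order $p$, so abelianness of $(G,\circ)$ is better justified by observing that $(G,\circ)/\ker(\gamma) \cong \gamma(G)$ embeds in the abelian group $\Aut(G)$, while a group of type 4 has no abelian quotient of order divisible by $p$ (its commutator subgroup is its Sylow $p$-subgroup, which here is the set $A \nleq \ker(\gamma)$).
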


\subsection{G of type 2}
\label{subsec:G2}

In this  case $p \mid q-1$,  and $G = \cC_q  \rtimes_p \cC_{p^2}.$ The
Sylow $q$-subgroup $B = \Span{ b  }$ is characteristic in $G$. Here an
element of order  $p^{2}$ of $G$ induces an automorphism  of order $p$
of $B$.

We have
\begin{equation*}
  \Aut(G) \cong \Hol(\mathcal{C}_q) \times \mathcal{C}_p.
\end{equation*}
According  to  Subsection~4.5  of~\cite{classp2q}, the  second  direct
factor is generated by the automorphism $\psi$ of $G$ which fixes $b$,
and maps  every element of order  $p^2$ to its $({1+p})$-th  power. It
follows that $\psi$ fixes every element  of the unique subgroup of $G$
of order $p q$.

Since    $G$     is    non-abelian,     in    counting     the    GF's
Proposition~\ref{prop:duality_p2q} allows us to consider only the case
$B \le \ker(\gamma)$, and then  double the number of regular subgroups
we find.

Setting aside as always the  case of the right regular representation,
$\gamma(G)$  will thus  be a  subgroup of  $\Aut(G)$ of  order $p$  or
$p^2$.

A Sylow $p$-subgroup of $\Aut(G)$ is abelian, isomorphic to the direct
product of one  of the Sylow $p$-subgroups of  $\Hol(\cC_q)$ (which is
cyclic  of  order  $p^e$  where   $p^e  \mmid  q-1$),  and  the  group
$\Span{\psi}$.   Moreover,  the  elements of  $\Hol(\cC_q)$  of  order
dividing $p$ are of the form  $\iota(x)$ where $x$ is a $p$-element of
$G$.

\subsubsection{The case $\Size{\gamma(G)} = p^{2}$} 

This   case   can   only   occur   when  $p^2   \mid   q-1$,   as   by
Theorem~\ref{thm:gamma-for-regular}\eqref{item:gamma-is-hom}  we  have
$\gamma(G) \cong  (G,\circ)/\ker(\gamma)$, and this is  a cyclic group
by Theorem~\ref{teo:sylow}.  Therefore $\gamma(G)$  is generated by an
element $(\eta, \psi^t)$, where  $\eta\in \Hol(\cC_q)$ has order $p^2$
and $0  \le t  < p$.   Since $\eta^p$ is  an element  of order  $p$ of
$\Hol(\cC_q)$, we have $\eta^{p} = \iota(a)$  for an element of $a \in
G$  of  order  $p^2$.   Since  every  Sylow  $p$-subgroup  of  $G$  is
self-normalising,  $A =  \Span{a}$ is  the only  $\gamma(G)$-invariant
Sylow $p$-subgroup.

Once  one  of  the  $q$  Sylow  $p$-subgroups  $A$  has  been  chosen,
Remark~\ref{Argument} tells  us that to count  the GF's on $G$  we can
count    the   RGF's    $\gamma'   \colon    A   \to    \Aut(G)$.   By
Proposition~\ref{prop:images_gamma}, these are as many as the possible
images
\begin{equation}
  \label{eq:eta}
  \gamma'(a) = (\eta, \psi^t),
\end{equation}
with $\ord(\eta) = p^{2}$,  $0 \le t < p$, such  that $A$ is invariant
under   $\gamma'(a)$.  Since   $\Span{\psi}$  fixes   all  the   Sylow
$p$-subgroups of $G$,  it follows as above that $A  = \Span{a}$, where
$\eta^{p} = \iota(a)$.

Therefore, once  $A$ is chosen,  we have $p(p-1)$ choices  for $\eta$,
and $p$ choices for  $t$. So we have $q p^{2} (p-1)$  GF's on $G$ with
$\gamma(G)$ of order $p^{2}$.

In this case $(G,\circ)$ is always of  type 3. In fact, if $b^{\eta} =
b^{j}$, then $j$ has order $p^2$ modulo $q$ and
\begin{equation*}
  a^{\ominus 1}  \circ b  \circ a  = a^{-1}  b^{\gamma(a)} a  = a^{-1}
  b^{\eta} a = a^{-1} b^j a = b^{j \iota(a)}.
\end{equation*}
Since $\iota(a)$ is  an automorphism of $B$ of  order $p$, conjugation
by $a$ in $(G, \circ)$ is an automorphism of $B$ of order $p^{2}$.

As to  the conjugacy  classes, $\iota(B)$ conjugates  transitively the
Sylow      $p$-subgroups      of       $G$,      so      that,      by
Lemma~\ref{lemma:conjugacy}\eqref{item:invariance-under-conjugacy},
all  classes  have   order  a  multiple  of  $q$.    Since  the  Sylow
$p$-subgroups of $\Aut(G)$ are abelian, we then have for the action of
$\psi$ on one of our $\gamma$

\begin{equation*}
  \gamma^{\psi}(a)   =   \psi^{-1}    \gamma(a^{\psi^{-1}})   \psi   =
  \gamma(a^{1-p}),
\end{equation*}
so that  all classes have also  order a multiple of  $p$.  Finally, if
$\theta$ is an element of order $q  - 1$ of $\Aut(G)$ which fixes $a$,
then  $\Span{\theta}$  is in  the  stabiliser  of each  $\gamma$.   It
follows that we have $p (p -1)$ classes of length $q p$ here.

\subsubsection{The case $\Size{\gamma(G)} = p$} 
Here $\ker(\gamma)$ is the unique subgroup  of $G$ of index $p$. Since
$\gamma(G)$  acts  trivially  on  $G /  \ker(\gamma)$,  we  have  $[G,
  \gamma(G)]     \le    \ker(\gamma)$,     and    thus     by    Lemma
~\ref{Lemma:gamma_morfismi} all the GF's are morphisms $G \to \Aut(G)$
here.

In the case when $\gamma(G) = \Span{\psi}$, each Sylow $p$-subgroup of
$G$ is $\gamma(G)$-invariant, thus every  RGF on such a Sylow subgroup
lifts  to the  same GF  on  $G$. If  $\Span{a}$  is any  of the  Sylow
$p$-subgroups, there are  $p - 1$ choices for $\gamma(a)$,  and such a
choice determine $\gamma$  uniquely.  It is immediate to  check that $
a^{\ominus 1}  \circ b \circ  a =a^{-1} b a$, so that  the corresponding
groups $(G,\circ)$ are all of type 2.

As  to  the   conjugacy  classes,  $\psi$  is   central  in  $\Aut(G)$
and~\cite[Remark 3.3]{classp2q} implies  that $\Aut(G)$ acts trivially
on  $G  / \ker(\gamma)$  so  that  for  $\beta  \in \Aut(G)$  we  have
$\gamma^{\beta}(a) =  \gamma(a^{\beta^{-1}}) = \gamma(a)$, and  we end
up with $p - 1$ conjugacy classes of length $1$.

If  $\gamma(G) \ne  \Span{\psi}$, as  above  there is  a unique  Sylow
$p$-subgroup $A = \Span{a}$ fixed by $\gamma(G)$, and
\begin{equation*}
  \gamma(a) =  (\iota(a)^{-s}, \psi^t),  \quad \text{for  some $0  < s
    <p$, $0 \le t < p$.}
\end{equation*}
Since there are $q$ choices for the Sylow $p$-subgroup $A$, this gives
a total of $q (p - 1)p$ GF's. For the operation $\circ$ we have
\begin{equation*}
  a^{\ominus  1}\circ  b\circ a  =  a^{-1}  b^{\gamma(a)} a  =  a^{-1}
  b^{\iota(a)^{-s}} a = b^{a^{-s+1}}.
\end{equation*}
If $s \equiv 1 \pmod p$, then $(G,\circ)$ is of type 1 for each of the
$q p$ choices of $A$ and $t$.

If $s \not\equiv 1 \pmod p$, $(G,\circ)$  is of type 2; here there are
$q$ choices for $A$, $p - 2$ choices for $s$, and $p$ choices for $t$.

As to the conjugacy classes, with the above arguments we see that they
have all length a multiple of  $q$, and that the subgroup $\Span{\psi,
  \theta}$ of  order $p (q  - 1)$ is in  the stabiliser, so  that each
class has indeed length $q$.

We summarise, including the right and left regular representations.
\begin{prop}
  \label{prop:G2}
  Let  $G$ be  a  group of  type  2,  that is,  $G  = \cC_q  \rtimes_p
  \cC_{p^2}$.

  Then in $\Hol(G)$ there are:
  \begin{enumerate}
  \item 
    $2  p q$  regular subgroups  of  type 1,  which split  into $2  p$
    conjugacy classes of length $q$;
  \item
    \label{item:to-extend}
    $2 q p (p  - 2) + 2 p$ of  type 2, which split into $2  p (p - 2)$
    conjugacy classes  of length $q$,  and $2 p$ conjugacy  classes of
    length $1$;
  \item
    $2 q p^{2} (p - 1)$ further regular subgroups of type 3, if $p^{2}
    \mid q - 1$,  which split into $2 p (p -  1)$ conjugacy classes of
    length $q p$.
  \end{enumerate}
\end{prop}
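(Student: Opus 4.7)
The plan is to invoke Proposition~\ref{prop:duality_p2q} with $r = q$, $B$ the (normal, cyclic) Sylow $q$-subgroup and $C = B$: this reduces the enumeration to gamma functions $\gamma$ with $B \le \ker(\gamma)$, after which every count of regular subgroups and of conjugacy classes is doubled. For such a $\gamma$ we have $\gamma(G) = \gamma(A)$ for $A$ a Sylow $p$-subgroup, and $\gamma(G) \cong (G,\circ)/\ker(\gamma)$ by Theorem~\ref{thm:gamma-for-regular}\eqref{item:gamma-is-hom}. Since the Sylow $p$-subgroup of $\Aut(G) \cong \Hol(\cC_q) \times \Span{\psi}$ is abelian of order $p^{e+1}$ (with $p^e \mmid q-1$) and $(G,\circ)$ has cyclic Sylow $p$-subgroup by Theorem~\ref{teo:sylow}, $\gamma(G)$ is cyclic of order $1$, $p$, or $p^2$.

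Next I would stratify by $\Size{\gamma(G)}$ and in each case apply the tools assembled above: Remark~\ref{Argument} together with Proposition~\ref{prop:1.4} to parametrise $\gamma$ by a RGF on a suitable $\gamma(G)$-invariant Sylow $p$-subgroup; Proposition~\ref{prop:images_gamma} to determine the RGF from the image of a generator; Lemma~\ref{lemma:inverse_and_conjugacy} to identify the isomorphism type of $(G,\circ)$ via the computation of $a^{\ominus 1}\circ b\circ a$; and Lemma~\ref{lemma:conjugacy} to compute the conjugacy classes. The key geometric fact is that $\iota(B)$ acts transitively on the $q$ Sylow $p$-subgroups of $G$, so whenever the $\gamma(G)$-invariant Sylow $p$-subgroup is unique the orbit length under $\Aut(G)$-conjugation is divisible by $q$; in the other direction, the $(q-1)$-element complement in $\Aut(G)$ stabilising $a$ centralises such a $\gamma$, and $\psi$ is central in the relevant Sylow $p$-subgroup of $\Aut(G)$, so orbit lengths are bounded above by $q$ or $qp$ in the respective cases.

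This case analysis is precisely the content of the two subsections immediately preceding the proposition, and I would cite it to obtain: in the case $\Size{\gamma(G)} = p^2$ (only when $p^2 \mid q-1$), $q p^2 (p-1)$ gamma functions all yielding type $3$, in $p(p-1)$ classes of length $qp$; in the case $\Size{\gamma(G)} = p$ with $\gamma(G) = \Span{\psi}$, $p-1$ morphisms of type $2$ in singleton classes; and in the case $\Size{\gamma(G)} = p$ with $\gamma(G) \neq \Span{\psi}$, a unique $\gamma(G)$-invariant Sylow $p$-subgroup $A = \Span{a}$ and $\gamma(a) = (\iota(a)^{-s}, \psi^t)$ splitting as $qp$ functions of type $1$ (for $s \equiv 1 \pmod{p}$) and $q(p-2)p$ of type $2$ (for $s \not\equiv 1 \pmod{p}$), all in classes of length $q$. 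Finally I would add the right regular representation as a singleton class of type $2$ (as $(G,\circ) \cong G$).

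Summing the counts with $B \le \ker(\gamma)$ and applying the factor $2$ from Proposition~\ref{prop:duality_p2q} yields for type $1$ the total $2qp$ in $2p$ classes of length $q$; for type $2$ the total $2[qp(p-2) + (p-1) + 1] = 2qp(p-2) + 2p$, split into $2p(p-2)$ classes of length $q$ and $2p$ classes of length $1$; and for type $3$ the total $2qp^2(p-1)$ in $2p(p-1)$ classes of length $qp$. The main delicate step is verifying that duality pairs a singleton class with a singleton class and a length-$q$ class with a length-$q$ class, which follows from Proposition~\ref{prop:gammatilde} together with the fact that the $\inv$ map normalises $\Hol(G)$ and hence preserves the $\Aut(G)$-orbit structure on gamma functions.
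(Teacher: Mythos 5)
Your proposal is correct and follows essentially the same route as the paper's own treatment of type 2: duality (Proposition~\ref{prop:duality_p2q} with $C = B$) to reduce to $B \le \ker(\gamma)$, stratification by $\Size{\gamma(G)} \in \Set{p, p^2}$ with the $\Span{\psi}$ versus non-$\Span{\psi}$ split, lifting via Remark~\ref{Argument} and Proposition~\ref{prop:images_gamma}, identification of the type of $(G,\circ)$ through $a^{\ominus 1} \circ b \circ a$, and class lengths from the transitive $\iota(B)$-action together with the stabilising elements $\theta$ and $\psi$, with all resulting counts agreeing with Proposition~\ref{prop:G2}. The only point where your sketch is a shade short is the case $\Size{\gamma(G)} = p^2$: the bounds you state only give class length $q$ or $qp$, and to pin down $qp$ one also needs that $\psi$ moves $\gamma$ there (as in the paper, $\gamma^{\psi}(a) = \gamma(a^{1-p}) \ne \gamma(a)$ since $a^{p} \notin \ker(\gamma)$), so that the lengths are divisible by $p$ as well.
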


\subsection{G of type 3}
\label{subsec:G3} 

In  this  case $G  =  \cC_q  \rtimes_1  \cC_{p^{2}}$, with  $p^2  \mid
q-1$. The Sylow $q$-subgroup $B=\langle b\rangle$ is characteristic in
$G$, and an element of order $p^{2}$ of $G$ induces an automorphism of
order $p^{2}$  on $B$.  Here $\Aut(G)  \cong \Hol(\mathcal{C}_q)$, and
since $G$ has trivial centre we have $\Int(G)\cong G.$

Since
\begin{equation*}
  \Size{\frac{\Aut(G)}{\Int(G)}} = \frac{q-1}{p^2}
\end{equation*}
is  coprime to  $q$, and  the Sylow  $p$-subgroups of  $\Aut (G)$  are
cyclic, we get $\gamma(G)\le\Int(G)$.

By Proposition~\ref{prop:2su3},  $B$ is  also a Sylow  $q$-subgroup of
$(G,\circ)$, so that $\Size{\gamma(B)}$ divides $q$, and
\begin{equation*}
  \gamma(B) \le \iota(B) = \Set{ \iota(b^x) : 0 \le x < q }.
\end{equation*}

Proposition~\ref{prop:duality_p2q} now allows us  to consider only the
case  $B \le  \ker(\gamma)$, and  then  double the  number of  regular
subgroups we find.

Now  Theorem~1  of~\cite{Curran}  (as   recorded  in  Theorem~3.2  and
Remark~3.3 of~\cite{classp2q}) yields that $\Aut(G)$ acts trivially on
$G  /  B$,  so  that  $[G,  \gamma(G)] \le  [G,  \Aut(G)]  \le  B  \le
\ker(\gamma)$,  and then  by Lemma~\ref{Lemma:gamma_morfismi}  all the
GF's are morphisms $\gamma : G \to \Aut(G)$ in this case.

If $\gamma(G) \ne  \Set{1}$, we claim that there is  exactly one Sylow
$p$-subgroup of $G$ which  is $\gamma(G)$-invariant.  In fact, $\Size{
  \gamma(G) } =  p$ or $p^2$, and $\gamma(G)$ is  a cyclic subgroup of
$\Span{ \iota(a) }$ for some $a  \in G$ with $\ord(a) = p^{2}$.  Since
every Sylow $p$-subgroup of $G$ is self-normalising, $A = \Span{a}$ is
the only $\gamma(G)$-invariant Sylow $p$-subgroup.

Let $\gamma(a) = \iota(a^{-s})$, for some $0 < s < p^{2}$.

In $G$  we have  $a^{-1} b  a = b^{t}$,  for some  $t$ of  order $p^2$
modulo $q$.  We get
\begin{equation}      
  \label{eq:conjugating}
  a^{\ominus   1}   \circ  b   \circ   a   =  a^{-1}b^{\gamma(a)}a   =
  a^{-1} b^{\iota(a)^{-s}} a = a^{-(1-s)} b a^{1-s} = b^{t^{1-s}}.
\end{equation}
Equation~\eqref{eq:conjugating} yields the following.
\begin{itemize}
\item If $s \equiv 1 \pmod{p^{2}}$, then $(G,\circ)$ is of type 1, and
  for each of the $q$ Sylow  $p$-subgroups $A$ of $G$ there is exactly
  one GF with this property.
\item If $s \equiv 1 \pmod{p}$ but $s \not\equiv 1 \pmod{p^{2}}$, then
  conjugation by $a$ in $(G, \circ)$ has order $p$, so $(G, \circ)$ is
  of type  2. For each  of the $q$  Sylow $p$-subgroups $A$  there are
  $p-1$ such GF's, so that we get $q (p - 1)$ GF's in this case.
\item  If $s  \not  \equiv 1  \pmod{p}$, conjugation  by  $a$ in  $(G,
  \circ)$ has order $p^{2}$, hence $(G, \circ)$ is of type 3. For each
  of the $q$ Sylow $p$-subgroups $A$ of  $G$ there are $p^{2} - p - 1$
  such choices of $s$ with $0 < s  < p^{2}$, so that we get $q(p^{2} -
  p - 1)$ groups in this case.
\end{itemize}
As to  the conjugacy  classes, $\iota(B)$ conjugates  transitively the
Sylow      $p$-subgroups       of      $G$,      so       that      by
Lemma~\ref{lemma:conjugacy}.\eqref{item:invariance-under-conjugacy}
each conjugacy class for  $s \ne 0$ has length a  multiple of $q$. The
cyclic complement  of order $q -  1$ of $\iota(B)$ in  $\Aut(G)$ which
contains $\gamma(a) = \iota(a^{-s})$  centralises $a$ and $\gamma(a)$,
so that,  by Lemma~\ref{lemma:conjugacy}, it centralises  $\gamma$. It
follows that the conjugacy classes have length precisely $q$.

We summarise, including the right and left regular representations.
\begin{prop}
  \label{prop:G3}
  Let     $G$     be     a     group     of     type     3,     namely
  $G=\mathcal{C}_q\rtimes_1\mathcal{C}_{p^2}$.    Then  in   $\Hol(G)$
  there are:
  \begin{enumerate}
  \item  $2 q$  regular  subgroups of  type 1,  which  split into  $2$
    conjugacy classes of length $q$;
  \item $2 q (p - 1)$ regular subgroups of type 2, which split into $2
    (p - 1)$ conjugacy classes of length $q$;
  \item $2 (1 +  q (p^2 - p - 1))$ regular subgroups  of type 3, which
    split into $2$ conjugacy classes of length  $1$, and $2 (p^2 - p -
    1)$ conjugacy classes of length $q$.
  \end{enumerate}
\end{prop}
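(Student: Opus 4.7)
My plan is to consolidate the case analysis of the preceding paragraphs in three steps: enumerate GFs with $B \le \ker(\gamma)$, double via duality, and then compute the $\Aut(G)$-conjugacy class lengths.

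First I would enumerate GFs $\gamma$ on $G$ with $B \le \ker(\gamma)$. Such a $\gamma$ is a morphism taking values in $\Int(G)$, and when $\gamma(G) \ne \{1\}$ it admits a unique $\gamma(G)$-invariant Sylow $p$-subgroup $A = \Span{a}$, with $\gamma(a) = \iota(a^{-s})$ for some $s \in \{1, \dots, p^{2} - 1\}$; conversely every such pair $(A, s)$ yields a distinct GF (via Proposition~\ref{prop:images_gamma} and the pattern discussed in Remark~\ref{Argument}). I would then split into sub-cases using~\eqref{eq:conjugating}: $s = 1$ yields $q$ GFs with $(G,\circ)$ abelian, i.e.\ of type $1$; the $p - 1$ values $s \equiv 1 \pmod p$ with $s \ne 1$ yield $q(p-1)$ GFs of type $2$ (conjugation by $a$ of order $p$); the remaining $p^{2} - p - 1$ values with $s \not\equiv 1 \pmod p$ yield $q(p^{2} - p - 1)$ GFs of type $3$. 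To these I would add the trivial GF $\gamma \equiv 1$, the right regular representation, also of type $3$.

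Next I would invoke Proposition~\ref{prop:duality_p2q} with $r = q$: the involution $\gamma \leftrightarrow \tilde\gamma$ is type-preserving and exchanges the conditions $B \le \ker(\gamma)$ and $B \not\le \ker(\gamma)$, so doubling the counts above yields the asserted totals $2q$, $2q(p-1)$, and $2(1 + q(p^{2} - p - 1))$. For the conjugacy classes, $\rho(G)$ (corresponding to $\gamma \equiv 1$) and $\lambda(G) = \rho(G)^{\inv}$ are both normal in $\Hol(G)$, hence form the two singleton classes of type $3$. For every other $\gamma$, the transitive action of $\iota(B) \le \Aut(G)$ on the $q$ Sylow $p$-subgroups of $G$ forces, via Lemma~\ref{lemma:conjugacy}\eqref{item:invariance-under-conjugacy}, the class length to be divisible by $q$; on the other hand the cyclic complement of $\iota(B)$ of order $q - 1$ in $\Aut(G)$ containing $\gamma(a) = \iota(a^{-s})$ centralises both $a$ and $\gamma(a)$, and hence centralises $\gamma$ by Lemma~\ref{lemma:conjugacy}, so that the class length equals $q$ exactly. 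Dividing the regular-subgroup counts by $q$ yields the advertised $2$, $2(p-1)$, and $2(p^{2} - p - 1)$ classes of length $q$.

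The main point requiring care is checking that the duality $\gamma \leftrightarrow \tilde\gamma$ commutes with $\Aut(G)$-conjugation on GFs, so that conjugacy classes are genuinely doubled and class lengths preserved rather than being merged or split. This is immediate from the explicit formula $\tilde\gamma(x) = \gamma(x^{-1}) \iota(x^{-1})$ of Proposition~\ref{prop:gammatilde} together with the fact that $\inv$ centralises $\Aut(G)$ (Proposition~\ref{prop:right-and-left}\eqref{item:inversion}).
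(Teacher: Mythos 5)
Your proposal is correct and follows essentially the same route as the paper: reduce to $B \le \ker(\gamma)$ via Proposition~\ref{prop:duality_p2q}, enumerate via the unique $\gamma(G)$-invariant Sylow $p$-subgroup $A=\Span{a}$ with $\gamma(a)=\iota(a^{-s})$ and the case split on $s$ from~\eqref{eq:conjugating}, and compute class lengths by combining the transitive $\iota(B)$-action with the order-$(q-1)$ complement stabilising $a$ and $\gamma(a)$. The only differences are cosmetic: you take on faith the facts the paper proves in a line (that $\gamma(G)\le\Int(G)$, via $\Size{\Aut(G)/\Int(G)}=(q-1)/p^{2}$ and cyclic Sylow $p$-subgroups of $\Hol(\cC_q)$, and uniqueness of the invariant Sylow via self-normalisation), while you make explicit the point the paper leaves implicit, namely that the duality $\gamma\mapsto\tilde\gamma$ commutes with $\Aut(G)$-conjugation and so doubles conjugacy classes without merging them.
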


\subsection{G of type 4}
\label{subsec:G4}

Here $q  \mid p  - 1$,  and $G  = \cC_{p^2}  \rtimes \cC_q$,  where an
element  of   order  $q$  acts   non-trivially  on  the   unique  Sylow
$p$-subgroup $B = \Span{b}$.  We have $\Aut(G) \cong \Hol(\cC_{p^2})$,
and $\Inn(G) \cong G$, as $Z(G) =  1$.  The groups $(G, \circ)$ can be
of type 1 or 4.

As  discussed  in  the  proof  of  Proposition~\ref{prop:duality_p2q},
$\Aut(G)$  has a  unique Sylow  $p$-subgroup, which  is isomorphic  to
$\cC_{p^2} \rtimes \cC_p$ where the  normal factor is $\Span{ \iota(b)
}$.  For the second factor we  have $p$ choices. In fact, according to
Theorem~3.4 and Remark~3.5 of~\cite{classp2q}, for each of the $p^{2}$
Sylow $q$-subgroups $A$ we can choose a generator $\psi$ of the second
factor such that $\psi \colon b \mapsto b^{1+p}$, and $\psi$ restricts
to the identity on  $A$: we will make a convenient  choice of $A$, and
thus $\psi$, later.  Note that if $\psi$ is the  identity on the Sylow
$q$-subgroup $A =  \Span{a}$, then it is also the  identity on the $p$
Sylow $q$-subgroups $\Span{a b^{p i}}$, for $0 \le i < p$.

Since the Sylow $q$-subgroups of $\Aut(G)$ are cyclic, the elements of
$\Aut(G)$ of order  $q$ are inner automorphisms,  given by conjugation
by an element of $G$ of order $q$.

By  Proposition~\ref{prop:duality_p2q},  we  can restrict  ourselves  to
counting the GF's such that $p$ divides $\Size{\ker(\gamma)}$.

\subsubsection{The case $\Size{\ker(\gamma)} = p^2$}   Here
$\ker(\gamma) =  B$ and $\Size{ \gamma(G)  } = q$. Hence  $\gamma(G) =
\Span{ \iota(a)  }$ for  some $a  \in G$ of  order $q$,  so that  $A =
\Span{a}$  is the  unique $\gamma(G)$-invariant  subgroup. Since  $[G,
  \gamma(G)] \le B = \ker(\gamma)$,  each such $\gamma$ is a morphism.
For the operation $\circ$ we have
\begin{equation*}
  a^{\ominus  1}  \circ   b  \circ  a  =  a^{-1}   b^{\gamma(a)}  a  =
  b^{\gamma(a) \iota(a)} .
\end{equation*}
Once  we have  made one  of  the $p^{2}$  choices for  $A$, the  value
$\gamma(a) = \iota(a)^{-1}$ will give an abelian group $(G, \circ)$ of
type 1,  while all  other $q  - 2$ choices  for $\gamma(a)$  will give
groups of type 4.

As to  the conjugacy  classes, $\iota(B)$ conjugates  transitively the
Sylow $q$-subgroups,  so that  all classes have  length a  multiple of
$p^{2}$. The cyclic  complement of order $p (p -  1)$ of $\iota(B)$ in
$\Aut(G)$ which contains $\gamma(G) = \Span{\iota(a)}$ centralises $a$
and  $\gamma(a)$ so  that,  by  Lemma~\ref{lemma:conjugacy}, it  fixes
$\gamma$. Hence the conjugacy classes have length precisely $p^{2}$.

\subsubsection{The case $\Size{\ker(\gamma)} = p q$} 

This  case does  not occur.   In fact,  since $\ker(\gamma)  \norm (G,
\circ)$, we  have that $(G,  \circ)$ is abelian  here. Thus if  $a \in
\ker(\gamma)$ is an element of order $q$, we have
\begin{equation*}
  a =  b^{\ominus 1} \circ a  \circ b = b^{-  \gamma(b)^{-1} \gamma(a)
    \gamma(b) } a^{\gamma(b)} b = a^{\gamma(b) \iota(b)}.
\end{equation*}
Now $\gamma(b)$ is an element of order $p$ in $\Aut(G)$. Therefore, by
Remark~\ref{rem:ah-the-p-powers}, we have
\begin{equation*}
  (\gamma(b) \iota(b))^{p} = \iota(b^{p}).
\end{equation*}
This implies that $a$ and $b^{p}$ commute, a contradiction.

\subsubsection{The case $\Size{ \ker(\gamma) } = p$}
\label{subsec: ker=p}

Here $\ker(\gamma)  = B^{p}$, and  $\Size{\gamma(G)} = p  q$.  Clearly
$\gamma(B)$ has order $p$, as $B  \le (G, \circ)$. An element of order
$q$ of $\gamma(G)$  will thus be of the form  $\gamma(a)$, for some $a
\in G$, which will be of order  $q$, as all elements of $G$ outside of
$B$ have order $q$.

Therefore
\begin{equation*}
  \gamma(a) = \iota(a^{-l} b^{m})
\end{equation*}
for some $0  < l < q$ and  $m$, so that $a^{-l} b^{m}$  has also order
$q$. Now choose a $\psi$ as above that fixes $a^{-l} b^{m}$.

$\gamma(b)$ will be an element of  order $p$ of $\Aut(G)$, that is, an
element of $\Span{\iota(b^{p}), \psi}$, an elementary abelian group of
order $p^{2}$. Since  $\gamma(G)$ is a subgroup of  $\Aut(G)$ of order
$p q$, and $p > q$, we will have that $\Span{\gamma(b)}$ is normalised
by $\iota(a^{-l} b^{m})$, an element  of order $q$.  Now $\iota(a^{-l}
b^{m})$ centralises $\psi$ by the choice of the latter, and normalises
but  does  not  centralise  $\Span{\iota(b^{p})}$.   In  other  words,
$\iota(a^{-l} b^{m})$  has two distinct  eigenvalues in its  action on
$\Span{\iota(b^{p}), \psi}$,  so that there are  two possibilities for
$\gamma(b)$ to be normalised by $\iota(a^{-l} b^{m})$.

If   $\gamma(b)    =\iota(b^{p   s})    \ne   1$,   for    some   $s$,
Proposition~\ref{prop:duality} yields that $p s \equiv -1 \pmod{p^2}$,
a contradiction.

Therefore $\gamma(b) = \psi^{t}$ for some  $t \ne 0$.  It follows that
$\gamma(G) = \Span  {\psi, \iota(a^{-l} b^{m})}$ is  abelian, and thus
$(G, \circ)$  is of  type 1, as  a group  of type 4  does not  have an
abelian quotient of order $p q$.

Comparing $a \circ b^{p} = a b^{p}$ with
\begin{equation*}
  b^{p} \circ a =  b^{p \gamma(a)} a = b^{p \iota(a^{-l})}  a = a b^{p
    \iota(a^{-l+1})}
\end{equation*}
in the abelian group $(G, \circ)$, we get $l = 1$.

Now
\begin{equation*}
  (a^{-1})^{\psi}  =  (a^{-1}  b^{m}  b^{-m})^{\psi}  =  a^{-1}  b^{m}
  (b^{-m})^{\psi} = a^{-1} b^{m} b^{-m (1 + p)} = a^{-1} b^{- p m},
\end{equation*}
so that, taking the inverse, we get
\begin{equation*}
  \label{eq:a-to-the-psi}
  a^{\psi} = b^{p m} a.
\end{equation*}

Comparing
\begin{equation*}
  a  \circ  b  = a^{\gamma(b)}  b  =  b^{p  m  t}  a  b =  b^{p  m  t}
  b^{\iota(a^{-1})} a
\end{equation*}
with
\begin{equation*}
  b \circ a = b^{\gamma(a)} a = b^{\iota(a^{-1})} a,
\end{equation*}
we obtain $p \mid m$. Write $m = p n$ for some $n$. We have thus
\begin{equation*}
  \gamma(b^{-p n}  a) =  \gamma(a) =  \iota(a^{-1} b^{p  n} )  = \iota
  (b^{-p n} a) ^{-1},
\end{equation*}
so that all  the GF's with kernel  of order $p$ can  be constructed as
follows.

Choose first  one of the  $p^{2}$ Sylow $q$-subgroups $A  = \Span{a}$.
Then define $\psi$ as  the automorphism of $G$ that is  the power $1 +
p$ on $B$, and fixes $a$. Finally define $\gamma$ as
\begin{equation}\label{eq:gamma-pq} 
  \begin{cases}
    \gamma(b) = \psi^{t}\\ \gamma(a) = \iota(a^{-1}).
  \end{cases}
\end{equation} 
It  is immediate  to  see that  $\gamma(a^{i}  b^{j}) =  \iota(a^{-i})
\psi^{t  j}$ defines  indeed a  GF satisfying~\eqref{eq:beta4}.   Note
that~\eqref{eq:beta4} determines $\Span{a}$ uniquely as the only Sylow
$q$-subgroup $A$  of $G$ which  is $\gamma(A)$-invariant. In  fact, if
$A=\Span{ab^k}$   is    $\gamma(A)$-invariant,   we    have,   writing
$b^{\iota(a^{-1})}=b^\lambda$,
\begin{equation*}
  A \ni (a b^k)^{\gamma(a b^k)}=ab^{k \lambda(1 + p t k)},
\end{equation*}
so that the latter equals $ab^k$, and we have
\begin{equation*}
  k(\lambda(1 + p t k)-1)\equiv 0\pmod{p^2}.
\end{equation*}
Since   $\lambda$  has   order  $q$   modulo  $p^2$,   we  have   that
$\lambda\not\equiv  1\pmod{p}$,   so  that   $k\equiv0\pmod{p^2}$  and
$A=\Span{a}$.

Therefore the $p^2$  choices for $A$ and  the $p - 1$  choices for $t$
yield $p^{2} (p - 1)$ choices for $\gamma$.

As   to   the   conjugacy    classes,   take   $\gamma$   defined   as
in~\eqref{eq:gamma-pq}.                                             By
Lemma~\ref{lemma:conjugacy}\eqref{item:invariance-under-conjugacy},
$\Span{\iota(b)}$  acts  regularly  on  the $\gamma$'s,  so  that  all
conjugacy classes have order a multiple of $p^{2}$.

Consider now the group  $R$ of automorphisms of $G$, of  order $p (p -
1)$, of the form
\begin{equation*}
  \beta :
  \begin{cases}
    a \mapsto a\\ b \mapsto b^{r}
  \end {cases} .
\end{equation*}
We claim that the stabiliser in  $R$ of any $\gamma$ is $\Span{\psi}$,
of order  $p$.  It  follows that  all conjugacy  classes have  order a
multiple of  $p-1$, and thus  all conjugacy classes have  order $p^{2}
(p-1)$.

In fact  one sees immediately, using  the fact that $\psi  \in R$, and
that the latter is cyclic, that
\begin{equation*}
  \begin{cases}
    \gamma^{\beta}(b)   =   \psi^{t  r^{-1}}\\   \gamma^{\beta}(a)   =
    \iota(a^{-1})
  \end{cases}
\end{equation*}
Thus $\gamma^{\beta} =  \gamma$ if and only if $r  \equiv 1 \pmod{p}$,
as claimed.

We summarise, including the right and left regular representations.
\begin{prop}
  \label{prop:G4}
  Let  $G$  be a  group  of  type 4,  namely  $G  = \cC_{p^2}  \rtimes
  \cC_q$. Then in $\Hol(G)$ there are:
  \begin{enumerate}
  \item  $2 p^{3}$  regular groups  of type  1, which  split into  $2$
    conjugacy classes  of length $p^{2}$,  and $2$ conjugacy  classes of
    length $p^{2} (p - 1)$;
  \item $2(1 + p^{2}  (q - 2))$ regular groups of  type 4, which split
    into  $2$  conjugacy classes  of  length  $1$,  and  $2 (q  -  2)$
    conjugacy classes of length $p^{2}$.
  \end{enumerate}
\end{prop}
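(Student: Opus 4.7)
The plan is to assemble the enumeration of gamma functions on $G = \cC_{p^{2}} \rtimes \cC_{q}$ from the three cases already analysed (organised by $\Size{\ker(\gamma)}$ under the reduction $B^{p} \le \ker(\gamma)$), include the right regular representation as a degenerate case, and then apply the duality halving of Proposition~\ref{prop:duality_p2q} to recover both the total count of regular subgroups and the conjugacy-class structure.

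First I would invoke Proposition~\ref{prop:duality_p2q} with $r = p$ and $C = B^{p}$: since $B$ is the unique, cyclic Sylow $p$-subgroup of $G$, the condition $p \mid \Size{\ker(\gamma)}$ is equivalent to $C \le \ker(\gamma)$, so it suffices to count GFs with this property and multiply the total (and each conjugacy class count) by $2$. Because $\ker(\gamma) \le G$ is normal in $(G, \circ)$ and contains $C$, the only possibilities for $\Size{\ker(\gamma)}$ are $p^{2}q$, $p^{2}$, $pq$, and $p$.

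Next I would collect the contributions from each case, as already worked out. The trivial gamma (kernel $G$) is the right regular representation and contributes a single subgroup of type~4 fixed under conjugation by $\Aut(G)$, hence a single conjugacy class of length~$1$. The case $\Size{\ker(\gamma)} = p^{2}$ contributes $p^{2}$ GFs of type~1, one per Sylow $q$-subgroup $A = \Span{a}$ from the choice $\gamma(a) = \iota(a)^{-1}$, all in a single conjugacy class of length $p^{2}$, together with $p^{2}(q-2)$ GFs of type~4 arranged into $q-2$ conjugacy classes of length $p^{2}$. The case $\Size{\ker(\gamma)} = pq$ is vacuous. The case $\Size{\ker(\gamma)} = p$ yields $p^{2}(p-1)$ GFs, all of type~1, all fused into a single conjugacy class of length $p^{2}(p-1)$.

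Finally, doubling by Proposition~\ref{prop:duality_p2q} yields $2p^{3}$ subgroups of type~1 distributed as $2$ classes of length $p^{2}$ plus $2$ classes of length $p^{2}(p-1)$, and $2(1 + p^{2}(q-2))$ subgroups of type~4 distributed as $2$ classes of length $1$ plus $2(q-2)$ classes of length $p^{2}$, which is the content of the two tabulations in the proposition. The main obstacle in the underlying case analysis is $\Size{\ker(\gamma)} = p$: the non-abelian Sylow $p$-subgroup of $\Aut(G) = \Hol(\cC_{p^{2}})$ generated by $\iota(b)$ and $\psi$ forces a delicate eigenvalue decomposition of the action of an order-$q$ element $\iota(a^{-l}b^{m})$ on the elementary abelian $\Span{\iota(b^{p}), \psi}$, together with an invocation of Proposition~\ref{prop:duality} to exclude $\gamma(b) \in \Span{\iota(b^{p})}$ and thus pin down $\gamma(b) = \psi^{t}$, $\gamma(a) = \iota(a^{-1})$.
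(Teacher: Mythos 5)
Your proposal is correct and follows essentially the same route as the paper: reduce via the duality of Proposition~\ref{prop:duality_p2q} to gamma functions whose kernel has order divisible by $p$, split by $\Size{\ker(\gamma)} \in \Set{p^{2}q, p^{2}, pq, p}$ (the $pq$ case being empty), count the GFs and their $\Aut(G)$-conjugacy classes in each case --- including the pinning down of $\gamma(b)=\psi^{t}$, $\gamma(a)=\iota(a^{-1})$ in the delicate case $\Size{\ker(\gamma)}=p$ --- and then double both totals and class counts. The case-by-case figures and class lengths you quote ($p^{2}$ type-1 GFs in one class of length $p^{2}$ and $p^{2}(q-2)$ type-4 GFs in $q-2$ classes of length $p^{2}$ for kernel $B$; $p^{2}(p-1)$ type-1 GFs in one class for kernel of order $p$; the trivial GF giving a class of length $1$) agree with the paper's computation.
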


\subsection{Proof of Theorem~\ref{number}~and \ref{number_sb}}

From Theorem~\ref{th:byott96}  we have that,  for each pair  of finite
groups $\Gamma, G$ with $\Size{\Gamma} = \Size{G}$,
\begin{equation*}
  e(\Gamma, G) =  \frac{\Size{\Aut(\Gamma)}} {\Size{\Aut(G)}} e'(\Gamma,
  G).
\end{equation*}
By  assumption  the Sylow  $p$-subgroups  of  the group  $\Gamma$  are
cyclic.  If the Sylow $p$-subgroups of  the group $G$ are also cyclic,
then    the     values    of     $e'(\Gamma,    G)$     computed    in
Propositions~\ref{prop:G1},    \ref{prop:G2},    \ref{prop:G3},    and
\ref{prop:G4}, and the cardinalities  of the automorphism groups given
in Table~\ref{table:grp_aut} yield the values of $e(\Gamma,G)$.

Propositions~\ref{prop:G1},    \ref{prop:G2},    \ref{prop:G3},    and
\ref{prop:G4} also yield, for  $G=(G,\cdot)$, the numbers of conjugacy
classes of  regular subgroups  of $\Hol(G)$, that  is, the  numbers of
isomorphism classes of skew braces $(G,\cdot,\circ)$.

If  the Sylow  $p$-subgroups of  the group  $G$ are  not cyclic,  then
Corollary~\ref{cor:t33} yields $e'(\Gamma, G)=e(\Gamma, G)=0$.

\section{Induced structures} 
\label{sec:split}

The  goal  of this  section  is  to  record  an alternative  proof  of
Proposition~\ref{prop:G1}, using the methods of~\cite{CRV16}.

Let  $L/K$ be  a  finite  Galois field  extension  and  let $\Gamma  =
\Gal(L/K)$.  A  Hopf-Galois structure on $L/K$  is called \emph{split}
if it is of type $G$ with $G=G_1\times G_2$ in a non-trivial way.

On the other  hand, let $F$ be a field  with $K\subseteq F\subseteq L$
such   that   $\Gamma'=\Gal(L/F)$   has   a   normal   complement   in
$\Gamma$. Assume that  $F/K$ and $L/F$ have  Hopf-Galois structures of
type $G_1$  and $G_2$, respectively.   Then $G=G_1\times G_2$  gives a
Hopf-Galois structure on $L/K$~\cite[Theorem 3]{CRV16}.  A Hopf-Galois
structure on $L/K$ is called \emph{induced} if it is obtained as above
for some field $F$ with $K\subsetneq F \subsetneq L$.

\begin{remark} It is immediate to see that each induced  Hopf-Galois structure is split, but the
converse it  is not  true in general  (see~\cite[Section 3.2]{CRV16}).
However, in the  case of extensions of  degree $p^2q$  all \emph{abelian} (split) structures are  induced. 
In fact, in this case the Galois group $\Gamma$ is always a semidirect product of a Sylow $p$-subgroup and a Sylow $q$-subgroup. On the other hand, if $N$ is an abelian group inducing a Hopf-Galois structure on $L/K$, then its Sylow $p$ and $q$-subgroups are characteristic, so they are stable under the action of $\rho(\Gamma)$ and \cite[Theorem 9]{CRV16} guarantees that each Hopf-Galois structure of type $N$ is induced.

In the case when $\Gamma$ has cyclic Sylow $p$-subgroups the only split structure is the cyclic one, so all split structures are induced in this case.
\end{remark}  
  
We give here a short sketch
of how  the number of cyclic  structures can be obtained  by computing
induced structures in the case of  Galois extension of degree $p^2q$, $p>2$ and cyclic Sylow $p$-subgroups.


So, let $L/K$  be a Galois extension of degree  $p^2q$ and assume that
$\Gamma$ has cyclic Sylow $p$-subgroups.  From Theorem~\ref{number} we
have that the number of cyclic  structures on $L/K$ is $p$, $pq$, $pq$
or $p^2$ for $\Gamma$ of type 1, 2, 3, or 4, respectively.

If  $\Gamma$ is  of  type 1,  2,  or 3,  then it  has  a normal  Sylow
$q$-subgroup  $B$. Denote  by  $A$ a  Sylow  $p$-subgroup of  $\Gamma$
(there is a  unique choice for $A$ if  $G$ is of type 1  and there are
$q$ choices if $G$  is of type 2 or 3) and let  $F$ be the fixed field
by $A$. Then $L/F$ is a Galois extension of degree $p^2$, so it admits
$p$ Hopf-Galois  structures, all  of cyclic  type (see~\cite[Corollary
  p.~3226]{Byo96}).  Moreover,  $B$ is  a normal  complement of  $A$ in
$\Gamma$,  so the  degree $q$  extension $F/K$  is almost  classically
Galois   and  it   always  admits   a  unique   Hopf-Galois  structure
(see~\cite[Theorem 5.2]{Par90}).   By~\cite[Theorem 3]{CRV16}  each of
these structures induces a split structure  on $L/K$, so we obtain the
$p$ or $pq$  structures, depending on $\Gamma$ being of  type 1, or of
type 2 or 3, already found in Theorem~\ref{number}.  If $\Gamma$ is of
type 2 or  3 those just described are the  only possible splittings of
$L/F$ which give induced structures.

If $\Gamma$  is of type 1,  in principle, we should  also consider the
tower $K\subseteq  M \subseteq  L$ where  $M$ is  the subfield  of $L$
fixed by the Sylow $q$-subgroup $B$.  However, in this case, since the
group $\Gamma$  splits, the  Hopf-Galois structures  on $L/M$  are the
same as  those on $F/K$ and  those on $M/K$  are the same as  those on
$L/F$, hence all of them have already been considered.

If $\Gamma$  is of type  4, the Sylow  $p$-subgroup is normal,  and we
have $p^{2}$ possible intermediate extensions $F$ with $L/F$ cyclic of
order  $q$  (one Hopf-Galois  structure  for  each of  them),  whereas
by~\cite[Theorem 9]{CS2018} $F/K$ has  a unique Hopf-Galois structure,
so we recover the $p^2$ cyclic structures as in Theorem~\ref{number}.


\begin{thebibliography}{CCDC19}

\bibitem[AB18]{BA17}
Ali~A. Alabdali and Nigel~P. Byott, \emph{Counting {H}opf-{G}alois structures
  on cyclic field extensions of squarefree degree}, J. Algebra \textbf{493}
  (2018), 1--19. \MR{3715201}

\bibitem[AB19a]{AcriBonatto_pq}
E.~Acri and M.~Bonatto, \emph{Skew braces of size $p q$}, arXiv e-prints
  (2019), \url{https://arxiv.org/abs/1912.11889}.

\bibitem[AB19b]{AcriBonatto_p2q}
\bysame, \emph{Skew braces of size $p^2q$}, arXiv e-prints (2019),
  \url{https://arxiv.org/abs/1908.03228}.

\bibitem[Bac16]{Bac16}
David Bachiller, \emph{Counterexample to a conjecture about braces}, J. Algebra
  \textbf{453} (2016), 160--176. \MR{3465351}

\bibitem[BC12]{ByoChi}
Nigel~P. Byott and Lindsay~N. Childs, \emph{Fixed-point free pairs of
  homomorphisms and nonabelian {H}opf-{G}alois structures}, New York J. Math.
  \textbf{18} (2012), 707--731. \MR{2991421}

\bibitem[Byo96]{Byo96}
Nigel~P. Byott, \emph{Uniqueness of {H}opf {G}alois structure for separable
  field extensions}, Comm. Algebra \textbf{24} (1996), no.~10, 3217--3228.
  \MR{1402555}

\bibitem[Byo04]{Byott04}
\bysame, \emph{Hopf-{G}alois structures on {G}alois field extensions of degree
  {$pq$}}, J. Pure Appl. Algebra \textbf{188} (2004), no.~1-3, 45--57.
  \MR{2030805}

\bibitem[Byo13]{Byott13}
\bysame, \emph{Nilpotent and abelian {H}opf-{G}alois structures on field
  extensions}, J. Algebra \textbf{381} (2013), 131--139. \MR{3030514}

\bibitem[Byo15]{Byott15}
\bysame, \emph{Solubility criteria for {H}opf-{G}alois structures}, New York J.
  Math. \textbf{21} (2015), 883--903. \MR{3425626}

\bibitem[Car18]{p4}
A.~Caranti, \emph{Multiple holomorphs of finite {$p$}-groups of class two}, J.
  Algebra \textbf{516} (2018), 352--372. \MR{3863482}

\bibitem[CCDC19]{classp2q}
E.~Campedel, A.~Caranti, and I.~Del~Corso, \emph{The automorphism groups of the
  groups of order $p^{2} q$}, arXiv e-prints (2019),
  \url{https://arxiv.org/abs/1911.11567}.

\bibitem[CDV17]{fgab}
A.~Caranti and F.~Dalla~Volta, \emph{The multiple holomorph of a finitely
  generated abelian group}, J. Algebra \textbf{481} (2017), 327--347.
  \MR{3639478}

\bibitem[CDV18]{perfect}
\bysame, \emph{Groups that have the same holomorph as a finite perfect group},
  J. Algebra \textbf{507} (2018), 81--102. \MR{3807043}

\bibitem[CDVS06]{affine}
A.~Caranti, F.~Dalla~Volta, and M.~Sala, \emph{Abelian regular subgroups of the
  affine group and radical rings}, Publ. Math. Debrecen \textbf{69} (2006),
  no.~3, 297--308. \MR{2273982}

\bibitem[Chi89]{Chi89}
Lindsay~N. Childs, \emph{On the {H}opf {G}alois theory for separable field
  extensions}, Comm. Algebra \textbf{17} (1989), no.~4, 809--825. \MR{990979}

\bibitem[Chi00]{ChildsBook}
\bysame, \emph{Taming wild extensions: {H}opf algebras and local {G}alois
  module theory}, Mathematical Surveys and Monographs, vol.~80, American
  Mathematical Society, Providence, RI, 2000. \MR{1767499}

\bibitem[Chi05]{Childs05}
\bysame, \emph{Elementary abelian {H}opf {G}alois structures and polynomial
  formal groups}, J. Algebra \textbf{283} (2005), no.~1, 292--316. \MR{2102084}

\bibitem[CRV16]{CRV16}
Teresa Crespo, Anna Rio, and Montserrat Vela, \emph{Induced {H}opf {G}alois
  structures}, J. Algebra \textbf{457} (2016), 312--322. \MR{3490084}

\bibitem[CS18]{CS2018}
Teresa Crespo and Marta Salguero, \emph{Computation of {H}opf {G}alois
  structures on low degree separable extensions and classification of those for
  degrees $p^2$ and $2p$}, Publ. Mat., to appear (2018),
  \url{https://arxiv.org/abs/1802.09948}.

\bibitem[CS19]{CreSal2019}
Teresa Crespo and Marta Salguero, \emph{Hopf {G}alois structures on separable
  field extensions of odd prime power degree}, J. Algebra \textbf{519} (2019),
  424--439. \MR{3880638}

\bibitem[Cur08]{Curran}
M.~J. Curran, \emph{Automorphisms of semidirect products}, Math. Proc. R. Ir.
  Acad. \textbf{108} (2008), no.~2, 205--210. \MR{2475812}

\bibitem[Die18]{Dietzel}
Carsten Dietzel, \emph{Braces of order $p^{2} q$}, arXiv e-prints (2018),
  \url{https://arxiv.org/abs/1801.06911}.

\bibitem[FCC12]{FCC12}
S.~C. Featherstonhaugh, A.~Caranti, and L.~N. Childs, \emph{Abelian {H}opf
  {G}alois structures on prime-power {G}alois field extensions}, Trans. Amer.
  Math. Soc. \textbf{364} (2012), no.~7, 3675--3684. \MR{2901229}

\bibitem[GAP19]{GAP4}
The GAP~Group, \emph{{GAP -- Groups, Algorithms, and Programming, Version
  4.10.2}}, 2019, \url{https://www.gap-system.org}.

\bibitem[GP87]{GP}
Cornelius Greither and Bodo Pareigis, \emph{Hopf {G}alois theory for separable
  field extensions}, J. Algebra \textbf{106} (1987), no.~1, 239--258.
  \MR{878476}

\bibitem[GV17]{skew}
L.~Guarnieri and L.~Vendramin, \emph{Skew braces and the {Y}ang-{B}axter
  equation}, Math. Comp. \textbf{86} (2017), no.~307, 2519--2534. \MR{3647970}

\bibitem[H{\"{o }}l93]{Holder1893}
Otto H{\"{o }}lder, \emph{Die {G}ruppen der {O}rdnungen {$p^3$}, {$pq^2$},
  {$pqr$}, {$p^4$}}, Math. Ann. \textbf{43} (1893), no.~2-3, 301--412.
  \MR{1510814}

\bibitem[Koh98]{kohl98}
Timothy Kohl, \emph{Classification of the {H}opf {G}alois structures on prime
  power radical extensions}, J. Algebra \textbf{207} (1998), no.~2, 525--546.
  \MR{1644203}

\bibitem[Koh13]{Ko13}
\bysame, \emph{Regular permutation groups of order {$mp$} and {H}opf {G}alois
  structures}, Algebra Number Theory \textbf{7} (2013), no.~9, 2203--2240.
  \MR{3152012}

\bibitem[Koh16]{Ko16}
\bysame, \emph{Hopf-{G}alois structures arising from groups with unique
  subgroup of order {$p$}}, Algebra Number Theory \textbf{10} (2016), no.~1,
  37--59. \MR{3463035}

\bibitem[KT19]{opposite}
Alan {Koch} and Paul~J. {Truman}, \emph{{Opposite skew left braces and
  applications}}, arXiv e-prints (2019), arXiv:1908.02682.

\bibitem[Par90]{Par90}
Bodo Pareigis, \emph{Forms of {H}opf algebras and {G}alois theory}, Topics in
  algebra, {P}art 1 ({W}arsaw, 1988), Banach Center Publ., vol.~26, PWN,
  Warsaw, 1990, pp.~75--93. \MR{1171227}

\bibitem[PS18]{PrSch}
Cheryl~E. Praeger and Csaba Schneider, \emph{Permutation groups and {C}artesian
  decompositions}, London Mathematical Society Lecture Note Series, vol. 449,
  Cambridge University Press, Cambridge, 2018. \MR{3791829}

\bibitem[Rum07a]{braces}
Wolfgang Rump, \emph{Braces, radical rings, and the quantum {Y}ang-{B}axter
  equation}, J. Algebra \textbf{307} (2007), no.~1, 153--170. \MR{2278047}

\bibitem[Rum07b]{Rump}
\bysame, \emph{Classification of cyclic braces}, J. Pure Appl. Algebra
  \textbf{209} (2007), no.~3, 671--685. \MR{2298848}

\bibitem[SV18]{SV2018}
Agata Smoktunowicz and Leandro Vendramin, \emph{On skew braces (with an
  appendix by {N}. {B}yott and {L}. {V}endramin)}, J. Comb. Algebra \textbf{2}
  (2018), no.~1, 47--86. \MR{3763907}

\bibitem[TQ19]{Tsang2019}
Cindy {Tsang} and Chao {Qin}, \emph{{On the solvability of regular subgroups in
  the holomorph of a finite solvable group}}, arXiv e-prints (2019),
  \url{https://arxiv.org/abs/1901.10636}.

\bibitem[Ven19]{problems}
Leandro Vendramin, \emph{Problems on skew left braces}, Adv. Group Theory Appl.
  \textbf{7} (2019), 15--37. \MR{3974481}

\bibitem[Zen18]{zen2018}
Kayvan~Nejabati Zenouz, \emph{{O}n {H}opf-{G}alois {S}tructures and {S}kew
  {B}races of {O}rder $p^3$}, PhD thesis, The University of Exeter (2018),
  \url{https://ore.exeter.ac.uk/repository/handle/10871/32248}.

\bibitem[Zen19]{zen2019}
\bysame, \emph{Skew braces and {H}opf-{G}alois structures of {H}eisenberg
  type}, J. Algebra \textbf{524} (2019), 187--225. \MR{3905210}

\end{thebibliography}

\providecommand{\bysame}{\leavevmode\hbox to3em{\hrulefill}\thinspace}
\providecommand{\MR}{\relax\ifhmode\unskip\space\fi MR }
\providecommand{\MRhref}[2]{%
  \href{http://www.ams.org/mathscinet-getitem?mr=#1}{#2}
}
\providecommand{\href}[2]{#2}

\end{document}